\newtheorem{thm}{Theorem}[section]
\newtheorem{lem}[thm]{Lemma}
\newtheorem{prop}[thm]{Proposition}
\newtheorem{cor}[thm]{Corollary}
\newtheorem{exam}[thm]{Example}
\theoremstyle{definition}
\theoremstyle{definition}
\newtheorem{defn}[thm]{Definition}
\newtheorem{remark}[thm]{Remark}
\newtheorem{note}[thm]{Note}
\newtheorem*{claim*}{Claim}
\newtheorem*{quest*}{Question}
\newtheorem*{remark*}{Remark}
\newtheorem*{fact*}{Fact}
\newcommand{\Z}{\ensuremath{\mathbb{Z}}}
\newcommand{\Q}{\ensuremath{\mathbb{Q}}}
\newcommand{\R}{\ensuremath{\mathbb{R}}}
\title{On the symmetric group action on rigid disks on a strip}
\author{Nicholas Wawrykow}
\thanks{Department of Mathematics, University of Michigan, 530 Church St, Ann Arbor, MI 48109   \\\href{mailto:wawrykow@umich.edu}{wawrykow@umich.edu} \\ MSC: 55R80, 55R40, 18A25 \\ Keywords: Configuration Space, Representation Stability,  Representation Theory, Homological Stability \\ Support: The author was supported in part by NSF grant DMS - 1840234}
\date{}
\begin{document}

\maketitle
\begin{abstract}
In this paper we decompose the rational homology of the ordered configuration space of $n$ open unit-diameter disks on the infinite strip of width $2$ as a direct sum of induced $S_{n}$-representations. In \cite{alpert2020generalized}, Alpert proves that the $k^{\text{th}}$-integral homology of the ordered configuration space of $n$ open unit-diameter disks on the infinite strip of width $2$ is an FI$_{k+1}$-module by studying certain operations on homology called ``high-insertion maps." The integral homology groups $H_{k}\big(\text{Conf}(n,2)\big)$ are free abelian, and Alpert computes a basis for $H_{k}\big(\text{Conf}(n,2)\big)$ as an abelian group. In this paper, we study the rational homology groups as $S_{n}$-representations. We find a new basis for $H_{1}\big(\text{Conf}(n,2);\Q\big),$ and use this, along with results of Ramos \cite{ramos2017generalized}, to give an explicit description of $H_{k}\big(\text{Conf}(n,2);\Q\big)$ as a direct sum of induced $S_{n}$-representations arising from free FI$_{d}$-modules. We use this decomposition to calculate the rank of the rational homology of the unordered configuration space of $n$ open unit-diameter disks on the infinite strip of width $2$.
\end{abstract}

\section{Introduction}
The ordered configuration space of $n$ open unit-diameter disks on the infinite strip of width $w$, denoted $\text{Conf}(n,w)$, is the space of ways of placing $n$, labeled, non-overlapping, unit-diameter disks on an infinite strip of width $w$. One can parameterize $\text{Conf}(n,w)$ as subspace of $\R^{2n}$
\[
\text{Conf}(n,w):=\Big\{(x_{1}, y_{1}, \dots, x_{n}, y_{n})\in \R^{2n}|(x_{i}-x_{j})^{2}+(y_{i}-y_{j})^{2}\ge 1\text{ for }i\neq j, \text{ and }\frac{1}{2}\le y_{i}\le w-\frac{1}{2} \text{ for all }i\Big\},
\]
where the point $(x_{1}, y_{1},\dots, x_{n}, y_{n})$ corresponds the configuration of disks where disk $i$ is centered at $x_{i}, y_{i}$, see Figure \ref{PointinConf(7,2)} for an example of a point in $\text{Conf}(7,2)$. 

\begin{figure}[h]
\centering
\includegraphics[width = 10cm]{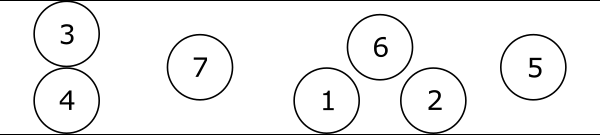}
\caption{A point in $\text{Conf}(7,2)$.}
\label{PointinConf(7,2)}
\end{figure}

Baryshnikov, Bubenik, and Kahle's paper \cite{BBK} began the study of the topology of disk configuration spaces, though interest in these spaces outside of pure mathematics predates this. Disk configurations spaces have been studied in the context of robotics and motion planning, phase transitions in physical chemistry, and physics, for example see \cite{farber2008invitation}, \cite{diaconis2009markov}, and \cite{carlsson2012computational}.

When there are at most as many disks as the strip is wide, i.e., $n\le w$, there are no obstructions to moving disks by each other, and $\text{Conf}(n,w)$ is homotopic to $F_{n}(\R^{2})$, the ordered configuration space of $n$ points in the plane. The (co)homology of these spaces are well studied, having explicit descriptions given in \cite{arnold1969cohomology} and \cite{cohen2007homology}; see, for example, \cite{knudsen2018configuration} or \cite{sinha2006homology} for an overview. When the number of disks is greater than the width of the strip, i.e., $n>w$, the complexity of the topology of $\text{Conf}(n,w)$ increases dramatically, as barriers to the free movement of the disks arise.

In their study of the homology of $\text{Conf}(n,w)$, Alpert, Kahle, and MacPherson proved that for all $n$ and $w$ the space $\text{Conf}(n,w)$ is homotopy equivalent to a polyhedral cell complex called  $\text{cell}(n,w)$ \cite[Theorem 3.1]{alpert2021configuration}. By studying this cell complex, they were able to determine the order of growth of $H_{k}\big(\text{Conf}(n,w)\big)$ as a function of $n$ for fixed $k$, proving it to be exponential times polynomial. When $n>w$, the exponential term in the growth rate is nontrivial, and the growth of $H_{k}\big(\text{Conf}(n,w)\big)$ outpaces that of $H_{k}\big(F_{n}(\R^{2})\big)$, which is only polynomial in $n$. 

As in the case of the ordered configuration space of points on the plane, the symmetric group $S_{n}$ acts on $\text{Conf}(n,w)$ by relabeling the disks. This $S_{n}$-action corresponds to an $S_{n}$-action on $\text{cell}(n,w)$. The symmetric group actions on $F_{n}(\R^{2})$ and $\text{cell}(n,w)$ extend to actions on $H_{k}\big(F_{n}(\R^{2})\big)$ and $H_{k}\big(\text{cell}(n,w)\big)$ for all $n$, making these homology groups symmetric group representations. There are maps from $H_{k}\big(F_{n}(\R^{2})\big)$ to $H_{k}\big(F_{n+1}(\R^{2})\big)$ that make $H_{k}\big(F_{\bullet}(\R^{2})\big)$ a finitely generated FI-module \cite[Theorem 6.4.3]{church2015fi}. This cannot be the case for $H_{k}\big(\text{cell}(\bullet,w)\big)$ as the rank of a finitely generated FI-module grows at most polynomially in $n$. Alpert proved that $H_{k}\big(\text{cell}(\bullet,2)\big)$ and therefore $H_{k}\big(\text{Conf}(\bullet,w)\big)$, is a finitely generated module over a category called FI$_{k+1}$, a generalization of FI \cite[Theorem 6.1]{alpert2020generalized}.

\begin{thm}
(Alpert \cite[Theorem 6.1]{alpert2020generalized}) For any $k$, the homology groups $H_{k}\big(\text{Conf}(\bullet,2)\big)$ form a finitely generated $FI_{k+1}$-module over $\Z$.
\end{thm}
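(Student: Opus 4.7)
The plan is to construct the required morphisms via Alpert's high-insertion maps at the cellular level, verify the FI$_{k+1}$-module axioms, and then bound the essential complexity of each basis class to obtain finite generation. I would first work with the cellular model $\text{cell}(n,2)\simeq\text{Conf}(n,2)$ of Alpert--Kahle--MacPherson, describing cells as combinatorial symbols encoding how the $n$ disks are layered on the width-$2$ strip. For each of $k+1$ distinguished ``color classes''---roughly, the admissible vertical positions relative to the existing configuration at which a new disk can be high-inserted---I would define a cellular chain map $\iota_{c}:C_{\ast}(\text{cell}(n,2))\to C_{\ast}(\text{cell}(n+1,2))$ that appends one new disk at the right. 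Together with the evident $S_{n}$-action on $\text{cell}(n,2)$, these maps would be the candidate generators of the FI$_{k+1}$-module structure.

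Next, I would verify the FI$_{k+1}$-module axioms at the chain level: two high-insertions of different colors should commute up to the relabeling transposition $(n+1,n+2)$, successive insertions should compose functorially under colored injections (corresponding to composition of morphisms in FI$_{k+1}$), and each $\iota_{c}$ should commute with the cellular differential. These are all combinatorial checks using the explicit cell structure of $\text{cell}(n,2)$. Passing to homology then yields a well-defined FI$_{k+1}$-module structure on $H_{k}(\text{cell}(\bullet,2))\cong H_{k}(\text{Conf}(\bullet,2))$.

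For finite generation, I would bound the ``active support'' of a basis $k$-cycle. Because the homological degree is $k$, such a cycle should involve only some bounded number $N(k)$ of disks actually participating in the cycle, while any remaining disks sit passively in one of the $k+1$ color classes and are therefore obtainable from the active core by high-insertions. Consequently, every basis element lies in the image of an FI$_{k+1}$-morphism from a class on at most $N(k)$ disks, yielding a finite generating set. The main obstacle is the second step: defining the chain-level high-insertions precisely enough that all FI$_{k+1}$ relations among different colors can be simultaneously verified, and so that they are compatible with the cellular differential in a manner that descends cleanly and functorially to homology.
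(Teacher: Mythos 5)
First, note that this theorem is not proved in the paper you were given: it is quoted from Alpert's work, and the surrounding text only summarizes the ingredients (the $k+1$ high-insertion maps $[\iota_{0}],\dots,[\iota_{k}]$, the abstract criterion that an FB-module with $d$ high-insertion maps commuting with permutations and unordered insertions is an FI$_{d}$-module, and the verification of those two identities). Your overall architecture --- define $k+1$ insertion maps, check the two FI$_{k+1}$ relations, then bound the generation degree by showing that all but a bounded ``active core'' of disks can be produced by insertions --- is exactly Alpert's. Your finite-generation sketch is also essentially right: every critical basis cell with at least one singleton is, after a relabeling, $\iota_{j}$ applied to a smaller critical cell, so the module is generated in degree at most $3k$. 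Two descriptive slips: the $k+1$ ``colors'' are not vertical positions but the $k+1$ horizontal slots determined by the $k$ barriers of a critical cell (far left, or immediately after the $j^{\text{th}}$ barrier), and the inserted disk always carries the new largest label $n+1$ but is not always appended ``at the right.''

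The genuine problem is the step you yourself flag: defining $\iota_{c}$ as a cellular chain map $C_{*}(\text{cell}(n,2))\to C_{*}(\text{cell}(n+1,2))$ commuting with the differential. The notion of ``the $j^{\text{th}}$ barrier'' is only canonical for critical cells (concatenation products of images of $1$, $1\,2$, and $1|3\,2$); for a general cell the boundary map removes bars and shuffles labels across any putative insertion point, so ``insert after the $j^{\text{th}}$ two-element block'' does not commute with $\partial$, and a $(k+1)$-cell has $k+1$ two-element blocks while its faces have $k$, so the indexing of slots is not even stable under passing to faces. The actual proof does not attempt a chain map at all: it uses the discrete-Morse basis $\{z(e)\}$ of $H_{k}(\text{cell}(n,2))$ indexed by critical cells, defines $[\iota_{j}]\sum a_{e}z(e):=\sum a_{e}z(\iota_{j}(e))$ directly on homology (observing that $\iota_{j}(e)$ is again critical), and then verifies the permutation-equivariance and unorderedness relations as identities between homology classes (this is where Lemma \ref{switchsingles}, allowing adjacent singletons to be transposed up to homology, does real work). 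As written, your plan would stall at the chain level; you should either replace it with the homology-level definition on the critical basis, or supply a genuinely new argument that a compatible chain-level insertion exists.
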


This theorem, along with results of Ramos \cite{ramos2017generalized}, suggests that it might be possible to determine a decomposition of $H_{k}\big(\text{Conf}(\bullet,2)\big)$ into irreducible symmetric group representations. Unfortunately, several computational difficulties arise when we consider homology with integer coefficients. Since $H_{k}\big(\text{Conf}(\bullet,2)\big)$ is torsion free \cite[Theorem 4.3]{alpert2020generalized} and symmetric group representations are better behaved over $\Q$ than $\Z$, we study $H_{k}\big(\text{Conf}(\bullet,2);\Q\big)$ in order to avoid some of these difficulties. We decompose $H_{k}\big(\text{Conf}(n, 2);\Q\big)$ as a direct sum of induced symmetric group representations.

\begin{thm}\label{mainthm}
As a representation of the symmetric group $S_{n}$, 
\[
H_{k}\big(\text{Conf}(n,2);\Q\big)\cong \bigoplus_{\substack{(n_{1},\dots, n_{k})|\\n_{i}\ge 2, \sum_{j=1}^{k}n_{j}\le n}}\Big(\bigoplus_{\substack{a=(a_{1},\dots, a_{d+1})|\\\sum a_{j}=n-\sum n_{i}\\d=\#n_{i}|n_{i}=2}}\big(\text{Ind}^{S_{n}}_{S_{n_{1}}\times\cdots\times S_{n_{k}}\times S_{a}}W_{n_{1}}\boxtimes\cdots\boxtimes W_{n_{k}}\boxtimes \Q\big)\Big),
\]
where $W_{n_{j}}=Triv_{2}$ is the trivial representation of $S_{2}$ if $n_{j}=2$, $W_{n_{j}}=\bigwedge^{2}Std_{n_{j}}$ where $Std_{n_{j}}$ is the standard representation of $S_{n_{j}}$  if $n_{j}>2$, and $\Q$ denotes the trivial representation of $S_{a}:=S_{a_{1}}\times\cdots \times S_{a_{d+1}}$.
\end{thm}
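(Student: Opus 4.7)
The plan is to bootstrap from the $k=1$ case to general $k$ by combining a new $S_n$-equivariant basis for $H_1(\text{Conf}(n,2);\Q)$ with Alpert's FI$_{k+1}$-module structure on the homology and Ramos's representation-theoretic description of free FI$_d$-modules \cite{ramos2017generalized}. The latter identifies the value at $[n]$ of a free FI$_{d+1}$-module generated in degree $m$ by an $S_m$-representation $V$ as $\bigoplus_{a_1+\cdots+a_{d+1}=n-m} \text{Ind}^{S_n}_{S_m \times S_{a_1}\times\cdots\times S_{a_{d+1}}} V\boxtimes\Q$, which is exactly the form appearing on the right-hand side of the theorem for each fixed cluster-type tuple. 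So the proof reduces to identifying $H_k\big(\text{Conf}(\bullet,2);\Q\big)$ as a direct sum of such free FI$_{d+1}$-modules, indexed by the cluster-type tuples $(n_1,\dots,n_k)$.

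First I construct the new basis for $H_1(\text{Conf}(n,2);\Q)$. Alpert's integral basis is indexed by data consisting of a distinguished blocking pair of disks together with a linear ordering of the remaining disks, and this indexing does not respect the $S_n$-action cleanly. Working rationally, I will replace it with a cluster-symmetrized basis: for each subset of $n_1\geq 3$ disks, the collection of blocking cycles supported on the cluster antisymmetrizes to span a copy of $\bigwedge^2 \text{Std}_{n_1}$, while for $n_1=2$ the single blocking cycle is $S_2$-symmetric and contributes a trivial representation. When $n_1=2$ the stacked pair separates the remaining disks into two flanking groups of sizes $a_1,a_2$ summing to $n-2$, producing the $d+1=2$ gap structure; when $n_1\geq 3$ the antisymmetry of $\bigwedge^2\text{Std}_{n_1}$ lets disks permute across the cluster, so the remaining $n-n_1$ disks lie in a single region, giving $d+1=1$. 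A rank comparison with Alpert's integral computation of $H_1$ will confirm that these classes exhaust $H_1(\text{Conf}(n,2);\Q)$.

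For general $k$, I will show $H_k(\text{Conf}(\bullet,2);\Q)$ decomposes as a direct sum of free FI$_{d+1}$-modules, one for each cluster-type tuple $(n_1,\dots,n_k)$ with $n_i\geq 2$, where $d=\#\{i:n_i=2\}$ and the generator in degree $\sum n_i$ is the outer tensor $W_{n_1}\boxtimes\cdots\boxtimes W_{n_k}$. Alpert's cellular basis for $H_k$ organizes into $k$-fold products of blocking clusters, and the $k=1$ analysis applied clusterwise produces the required generating representations; Alpert's high-insertion maps then freely populate the $d+1$ gap regions that persist around the size-$2$ clusters. Applying Ramos's formula to each free FI$_{d+1}$-summand yields the stated decomposition, and matching total dimensions against Alpert's rank formula for $H_k(\text{Conf}(n,2))$ rules out hidden pieces. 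The main obstacle I anticipate is the clean translation from Alpert's cellular basis to the generators of free FI$_{d+1}$-modules -- in particular, verifying that size-$\geq 3$ clusters contribute only one gap region each rather than two (i.e., that the antisymmetrization really collapses the left/right distinction in the high-insertion structure), and ruling out relations between summands indexed by distinct cluster-type tuples.
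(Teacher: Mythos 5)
Your proposal follows essentially the same route as the paper: replace Alpert's integral basis for $H_{1}$ by an $S_{n}$-equivariant one in which size-$2$ clusters span trivial representations generating a free FI$_{2}$-module and size-$\ge 3$ clusters span copies of $\bigwedge^{2}Std$ generating free FI$_{1}$-modules, organize $H_{k}$ into free FI$_{d+1}$-submodules indexed by cluster-type tuples via concatenation and high-insertion, verify exhaustion and independence by a dimension/leading-term comparison against Alpert's basis, and finish with Ramos's formula for free FI$_{d}$-modules. The one step you describe as routine — that the blocking cycles on a cluster of size $m+2$ really do produce exactly $\bigwedge^{2}Std_{m+2}$ with a single insertion slot — is where the paper concentrates its effort (an induction with a separate character computation in the exceptional case $m+2=6$, where $\bigwedge^{2}Std_{6}$ and $\bigwedge^{3}Std_{6}$ have equal dimension), so that is the point your sketch would need to flesh out.
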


Since we can use the Littlewood-Richardson rule to decompose $\text{Ind}^{S_{n}}_{S_{n_{1}}\times\cdots\times S_{n_{k}}\times S_{a}}W_{n_{1}}\boxtimes\cdots\boxtimes W_{n_{k}}\boxtimes \Q$ into irreducible $S_{n}$-representations, it follows that one could use Theorem \ref{mainthm} to get a decomposition of $H_{k}\big(\text{Conf}(n,2);\Q\big)$ into irreducible $S_{n}$-representations. From this direct sum we deduce formulae for the multiplicity of the trivial and alternating representations of $S_{n}$ in $H_{k}\big(\text{Conf}(n,2);\Q\big)$. We use the former to determine the rational homology of the unordered configuration space of $n$ disks on an infinite strip of width $2$.

\begin{cor}\label{UnorderCor}
Let $\text{UConf}(n,w):=\text{Conf}(n,w)/S_{n}$ denote the unordered configuration space of $n$ unit-diameter disks on the infinite strip of width $w$. Then $H_{k}\big(\text{UConf}(n,2);\Q\big)=\Q^{\binom{n-k}{k}}$.
\end{cor}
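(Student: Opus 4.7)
The plan is to identify $H_k(\text{UConf}(n,2);\Q)$ with the trivial isotypic component of $H_k(\text{Conf}(n,2);\Q)$ and count it using Theorem \ref{mainthm}. Since $S_n$ acts freely on $\text{Conf}(n,2)$ by permuting disk labels, the standard transfer argument for free actions of finite groups in characteristic zero yields
\[
H_k(\text{UConf}(n,2);\Q) \cong H_k(\text{Conf}(n,2);\Q)^{S_n},
\]
so it suffices to compute the multiplicity of the trivial $S_n$-representation in the direct sum of Theorem \ref{mainthm}.

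I then apply Frobenius reciprocity to each summand: the trivial multiplicity in $\text{Ind}^{S_n}_{S_{n_1}\times\cdots\times S_{n_k}\times S_a}(W_{n_1}\boxtimes\cdots\boxtimes W_{n_k}\boxtimes\Q)$ equals the dimension of the invariants of the external tensor product under $S_{n_1}\times\cdots\times S_{n_k}\times S_a$, which factors as the product of the invariant-dimensions of the tensor factors. The trivial $S_a$-factor contributes $1$. Each $W_{n_j}$ with $n_j=2$ is trivial and contributes $1$. For $n_j\ge 3$, however, $W_{n_j}=\bigwedge^2 \text{Std}_{n_j}$ is a non-trivial irreducible $S_{n_j}$-representation (the sign representation when $n_j=3$, and the irreducible labelled by the partition $[n_j-2,1,1]$ when $n_j\ge 4$), so it has zero trivial invariants and the entire summand drops out.

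Therefore only the summands with every $n_j=2$ contribute, each with multiplicity $1$. For these, $d=k$ and $(a_1,\dots,a_{k+1})$ ranges over weak compositions of $n-2k$ into $k+1$ nonnegative parts; by stars-and-bars there are $\binom{(n-2k)+k}{k}=\binom{n-k}{k}$ such, which is the claimed dimension. The only mildly non-routine ingredient is the identification of $\bigwedge^2 \text{Std}_{n_j}$ as non-trivial for $n_j\ge 3$; this is standard representation theory, but can alternatively be verified directly via the character formula $\dim V^G=|G|^{-1}\sum_{g\in G}\chi_V(g)$.
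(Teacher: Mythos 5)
Your proposal is correct and follows essentially the same route as the paper: identify $H_{k}\big(\text{UConf}(n,2);\Q\big)$ with the $S_{n}$-invariants of $H_{k}\big(\text{Conf}(n,2);\Q\big)$ via the transfer map, then count the trivial multiplicity in the decomposition of Theorem \ref{mainthm}, observing that only the summands with every $n_{j}=2$ survive and counting $(k+1)$-compositions of $n-2k$. The only cosmetic difference is that you compute the trivial multiplicity by Frobenius reciprocity, whereas the paper invokes the Littlewood--Richardson rule; both yield the same count.
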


To prove Theorem \ref{mainthm} we provide a basis for $H_{1}\big(\text{cell}(n,2);\Q\big)$ that allows us to easily decompose it into irreducible $S_{n}$-representations. This basis is not the basis of Alpert's Theorem 4.3 when viewed as a basis for rational homology \cite[Theorem 4.3]{alpert2020generalized}. We then show that certain products of basis elements yield generating sets for free FI$_{d}$-modules. We then show that the various free FI$_{d}$-modules that we get are linearly independent and generate $H_{k}\big(\text{cell}(n,2);\Q\big)$. This, along with a result of Ramos \cite[Proposition 3.4]{ramos2017generalized}, yields Theorem \ref{mainthm}.

\subsection{Acknowledgements}
The author would like to thank Jennifer Wilson for her insights and encouragement. The author would also like to thank Andrew Snowden for helpful conversations. The author was supported in part by NSF grant DMS - 1840234.

\section{$\text{Conf}(n,w)$ and $\text{cell}(n,w)$}

We review a theorem of Alpert, Kahle, and MacPherson demonstrating the homotopy equivalence between the ordered configuration space of $n$, open, unit-diameter disks on the infinite strip of width $w$ and the polyhedral cell complex $\text{cell}(n,w)$ \cite[Theorem 3.1]{alpert2021configuration}. After that we recall Alpert's theorem describing a basis for $H_{k}\big(\text{cell}(n,2)\big)$ \cite[Theorem 4.3]{alpert2020generalized}. This theorem will provide us hints on how to decompose elements of $H_{k}\big(\text{cell}(n,2);\Q\big)$ into products of elements in $H_{1}\big(\text{cell}(\bullet,2);\Q\big)$ making our study of the $S_{n}$-representation structure of $H_{k}\big(\text{Conf}(n,2);\Q\big)$ simpler.

Recall the definition of $\text{Conf}(n,w)$ from the introduction. Directly
computing the homology groups of these configuration spaces is a hard task. Fortunately, $\text{Conf}(n,w)$ is closely related to a polyhedral cell complex called $\text{cell}(n,w)$. We recall the definition of this complex that will help us investigate the structure of the homology groups of $\text{Conf}(n,2)$. Before doing that, we first define \emph{symbols}, which will serve as labels for the cells of $\text{cell}(n)$ and $\text{cell}(n,w)$

\begin{defn}
A \emph{symbol} on the set $\{1,\dots, n\}$ is a string of numbers and vertical bars, such that the numbers form a permutation of $1$ through $n$, and such that each vertical bar is immediately preceded and followed by a number. The elements $1,\dots, n$ are call \emph{labels}.
\end{defn}

\begin{defn}
A \emph{block} in a symbol is a substring of numbers between two vertical bars.
\end{defn}

\begin{exam}
$1|3\,2|4$ and $3|4\,1\,2$ are symbols on $\{1,2,3,4\}$, and $1$, $3\,2$, and $4$ are the blocks in $1|3\,2|4$, while $3$ and $4\,1\,2$ are the blocks in $3|4\,1\,2$.
\end{exam}

The cells of $\text{cell}(n)$ are represented by the symbols on $[n]:=\{1, \dots, n\}$, and the cells of $\text{cell}(n,w)$ are represented by the symbols on $[n]$ with blocks of size at most $w$. With this in mind we need a method of describing face and coface relations of cells in $\text{cell}(n)$ and $\text{cell}(n,w)$.

\begin{defn}
Given two blocks of disjoint elements $a_{1}\,a_{2}\,\cdots a_{n}$ and $b_{1}\,b_{2}\,\cdots\, b_{m}$ we can generate a third block on $n+m$ elements by placing the block $b_{1}\,b_{2}\,\cdots\, b_{m}$ after the block $a_{1}\,a_{2}\,\cdots a_{n}$. A \emph{shuffle} of the resulting block $a_{1}\,a_{2}\,\cdots a_{n}\,b_{1}\,b_{2}\,\cdots\, b_{m}$ with respect to $a_{1}\,a_{2}\,\cdots a_{n}$ and $b_{1}\,b_{2}\,\cdots\, b_{m}$ is a permutation of the elements of this block such that if $i<j$, the image of $a_{i}$ is to the left of the image of $a_{j}$, and the image of $b_{i}$ is to the left of the image of $b_{j}$.
\end{defn}

\begin{exam}
Consider the two blocks $1\,3\,2$ and $4\,5$, then $1\,3\,2\,4\,5$ and $4\,1\,3\,5\,2$ are shuffles of $1\,3\,2\,4\,5$ with respect to $1\,3\,2$ and $4\,5$, but $1\,2\,3\,4\,5$ is not a shuffle of $1\,3\,2\,4\,5$ with respect to $1\,3\,2$ and $4\,5$, as the order of the labels of the block $1\,3\,2$ is not preserved.
\end{exam}

Now we define $\text{cell}(n)$ and $\text{cell}(n,w)$.

\begin{defn}
The polyhedral cell complex \emph{$\text{cell}(n)$} has one $m$-cell for every symbol on $[n]$ with $n-m$ blocks. If $f$ and $g$ are two cells in $\text{cell}(n)$, then $f$ is a \emph{face} of $g$ if and only if the symbol representing $g$ can be obtained from the symbol representing $f$ by removing a sequence of bars from $g$ and shuffling the resulting blocks. We say that $g$ is a \emph{coface} of $f$.
\end{defn}

\begin{exam}
The cell complex $\text{cell}(3)$ has $0$-cells $1|2|3$, $1|3|2$, $2|1|3$, $2|3|1$, $3|1|2$, and $3|2|1$; 1-cells $1|2\,3$, $1|3\,2$, $1\,2|3$, $1\,3|2$, $2|1\,3$, $2|3\,1$, $2\,1|3$, $2\,3|1$, $3|1\,2$, $3|2\,1$, $3\,1|2$, and $3\,2|1$; and $2$-cells $1\,2\,3$, $1\,3\,2$, $2\,1\,3$, $2\,3\,1$, $3\,1\,2$, and $3\,2\,1$. See Figure \ref{cell(3)pic} to see the face and coface relations in $\text{cell}(3)$.
\end{exam}

\begin{figure}[h]
\centering
\includegraphics[width = 10cm]{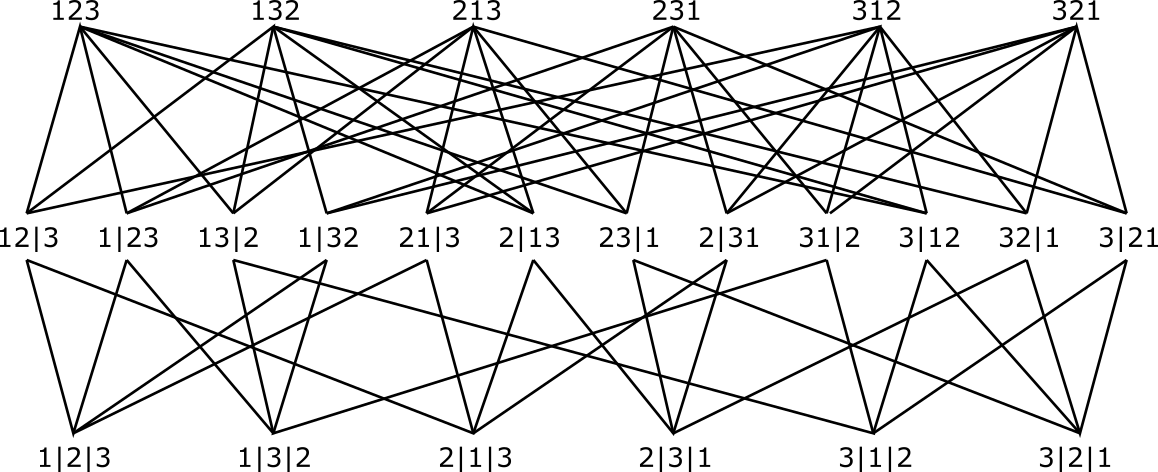}
\caption{The face-coface relation of $\text{cell}(3)$, where the $2$-cells are at the top of the image, $1$-cells in the middle, and $0$-cells on the bottom. Lines connect cofaces to faces.}
\label{cell(3)pic}
\end{figure}

\begin{defn}
The polyhedral cell complex \emph{$\text{cell}(n,w)$} is the subcomplex of $\text{cell}(n)$ that contains every cell $f$ in $\text{cell}(n)$ represented by a symbol whose blocks have at most $w$ elements.
\end{defn}

\begin{exam}
The cell complex $\text{cell}(3,2)$ has $0$-cells $1|2|3$, $1|3|2$, $2|1|3$, $2|3|1$, $3|1|2$, and $3|2|1$; 1-cells $1|2\,3$, $1|3\,2$, $1\,2|3$, $1\,3|2$, $2|1\,3$, $2|3\,1$, $2\,1|3$, $2\,3|1$, $3|1\,2$, $3|2\,1$, $3\,1|2$; and no $2$-cells. See Figure \ref{cell(3,2)pic} to see the cell structure.
\end{exam}

\begin{figure}[h]
\centering
\includegraphics[width = 6cm]{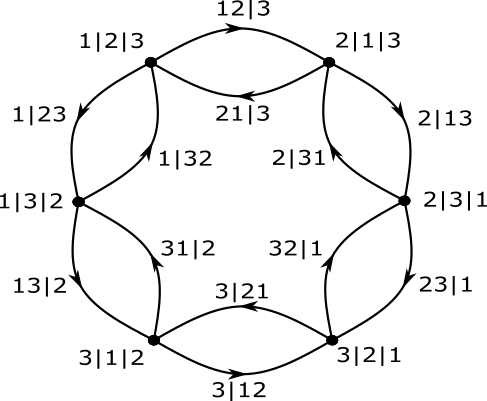}
\caption{The cell structure of $\text{cell}(3,2)$. Note the ``small loops," e.g., $2|1\,3+\,2|3\,1$, are concatenation products of injected cycles.}
\label{cell(3,2)pic}
\end{figure}

Note that some cells in $\text{cell}(n)$ and $\text{cell}(n,w)$ are the product of cells, e.g., $1|2\,3$ is the product of a $0$-cell labeled by $1$ and a $1$-cell labeled by $2\,3$.


\begin{defn}
We can form the \emph{concatenation product} of two cells with disjoint labels by placing them next to each other with a bar in between.
\end{defn}

\begin{exam}
The $2$-cell $32|14$ is the concatenation product of the $1$-cell labeled by $32$ and the $1$-cell labeled by $14$. 
\end{exam}

We can view $\text{cell}(n)$ and $\text{cell}(n,w)$ as chain complexes if we can find a coherent way of determining the signs of the face maps, which would yield boundary maps $\partial$. This can be done inductively, by first determining the signs of the faces of a cell $g$ whose representative symbol  consists of a single block, and then establishing some ``Leibniz rule." 

Let $g$ be a cell represented by a symbol with only $1$ block, and let $f$ be a top dimensional face of $g$, i.e., there is one bar in the symbol representing $f$ and $g$ can be obtained by shuffling the two blocks of $f$. We set the sign of $f$ in $\partial g$ to be the sign of the permutation that takes the symbol representing $g$ to the symbol that arises from deleting the bar in the symbol representing $f$ and not reshuffling. Thus, 
\[
\partial g:=\sum a_{i}f_{i},
\]
where $f_{i}$ is a top dimensional face of $g$, and $a_{i}$ is the sign of the permutation taking the labels of $f_{i}$ to the labels of $g$. We can define the boundary of the concatenation product of cells whose symbols have disjoint sets of elements. Let $b(g)$ denote the number of blocks in $g$, then we set
\[
\partial(g_{1}|g_{2}):=\partial g_{1}|g_{2}+(-1)^{b(g_{1})}g_{1}|\partial g_{2}.
\]
If $g$ is a block size $1$ then set $\partial g=0$. By Lemma 2.1 of Alpert $\partial^{2}=0$, so $\partial$ is a boundary map \cite[Lemma 2.1]{alpert2020generalized}. We use this to calculate the homology groups of $\text{cell}(n)$ and $\text{cell}(n,w)$.

\begin{exam}
Note that $\text{cell}(2,2)$ is a loop, so $H_{0}(\text{cell}(2,2))=\Z$ and $H_{1}(\text{cell}(2,2))=\Z$. We set $z(12)=1\,2+2\,1$ to be the cycle representing a generator of $H_{1}(\text{cell}(2,2))$. See Figure \ref{b2}.
\end{exam}

If $A$ and $B$ are disjoint sets then we can concatenate a class $[z_{A}]\in H_{k_{A}}\big(\text{cell}(A, 2)\big)$ and $[z_{B}]\in H_{k_{B}}\big(\text{cell}(B, 2)\big)$ to get a class in $[z_{A}|z_{B}]\in H_{k_{A}+k_{B}}\big(\text{cell}(A\sqcup B, 2)\big)$ by concatenating the cells of a class representing $[z_{A}]$ with the cells of a class representing $[z_{B}]$. This motivates our next definition.

\begin{defn}
An \emph{injected cycle} is the result of applying an inclusion $\iota:[n]\hookrightarrow[m]$ to the labels of every cell of a cycle representing a class in $H_{i}\big(\text{cell}(n,w)\big)$.
\end{defn}

The majority of the cycles we will use to represent homology classes can be viewed as concatenation products of injected cycles.

\begin{exam}
A $1$-cycle generating $H_{1}\big(\text{cell}(2,2);\Z\big)$ is $1\,2+2\,1$. If $\iota:[2]\hookrightarrow[3]$ is the map that sends $1$ to $3$, and $2$ to $1$, then $3\,1+1\,3=1\,3+3\,1$ is an injected cycle representing a class in $H_{1}\big(\text{cell}(\{1, 3\},2);\Z\big)$. By taking the concatenation product with $2$ (an injected $0$-cycle representing a class in $H_{0}\big(\text{cell}(\{2\},2);\Z\big)$) on either the left (or right), we obtain a $1$-cycle in $H_{1}\big(\text{cell}(3,2)\big)$, namely $2|(1\,3+3\,1)=2|1\,3+2|3\,1$ (or $(1\,3+3\,1)|2=1\,3|2+3\,1|2$). See the small loops in Figure \ref{cell(3,2)pic}.
\end{exam}

Now that we have defined $\text{cell}(n,w)$, we state Alpert, Kahle, and MacPherson's theorem proving that $\text{Conf}(n,w)$ and $\text{cell}(n,w)$ are homotopy equivalent.

\begin{thm}\label{confiscell}
(Alpert--Kahle--MacPherson \cite[Theorem 3.1]{alpert2021configuration}) For every $n, w\ge 1$, we have a homotopy equivalence $\text{Conf}(n,w)\simeq \text{cell}(n,w)$. Moreover, these homotopy equivalences for $w$ and $w+1$ commute up to homotopy with the inclusions $\text{cell}(n,w)\hookrightarrow \text{cell}(n,w+1)$ and $\text{Conf}(n,w)\hookrightarrow \text{Conf}(n,w+1)$.
\end{thm}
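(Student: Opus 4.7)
The plan is to exhibit a strong deformation retraction of $\text{Conf}(n,w)$ onto a subspace homeomorphic to the geometric realization of $\text{cell}(n,w)$. The guiding intuition is a \emph{column stratification}: each configuration determines a grouping of the disks into vertical columns, with the disks inside a column recording a bottom-to-top order. Because a column in a strip of width $w$ holds at most $w$ unit-diameter disks, this combinatorial datum is precisely a symbol on $[n]$ whose blocks have size at most $w$, giving one stratum for each cell of $\text{cell}(n,w)$.

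First, I would associate to each symbol $\sigma$ a standard-model subset $X_\sigma \subset \text{Conf}(n,w)$ consisting of configurations that realize the column pattern of $\sigma$ in a canonical way: columns at prescribed horizontal separations, and within each column disks exactly stacked in the order given by the block, subject to a global translation degree of freedom that makes $X_\sigma$ an open cell of the expected dimension. I would then verify that the closure of $X_\sigma$ meets lower-dimensional standard models along the face-and-shuffle rule: as two adjacent columns slide together, their disks interleave into a single column whose vertical order is a shuffle of the two original orders, precisely matching the combinatorial definition of face in $\text{cell}(n,w)$.

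Next, I would construct an explicit deformation $h_t : \text{Conf}(n,w) \to \text{Conf}(n,w)$ that collapses each configuration onto its standard model. The flow should act primarily on the $x$-coordinates, driving disks with nearby horizontal projections into a common column while pushing distinct columns apart past unit distance, with $y$-coordinates adjusted only as forced by the non-overlap constraint. The naturality statement for the inclusions $\text{Conf}(n,w) \hookrightarrow \text{Conf}(n,w+1)$ and $\text{cell}(n,w) \hookrightarrow \text{cell}(n,w+1)$ is then immediate, since the retraction is defined purely in terms of horizontal proximity and vertical ordering and therefore does not refer to the width $w$ at all.

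The hard part will be ensuring continuity of the retraction across stratum boundaries, where the combinatorial type jumps as columns merge or split. Resolving this likely requires either a discrete Morse-theoretic reformulation on a suitable subdivision of $\text{Conf}(n,w)$ whose critical cells correspond to the symbols, or a carefully chosen piecewise-linear interpolation that varies continuously through column-merging events. Once continuity is verified, the image of the retraction inherits the polyhedral cell structure of $\text{cell}(n,w)$ and the homotopy equivalence follows.
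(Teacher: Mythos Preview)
This theorem is not proved in the present paper; it is quoted from Alpert--Kahle--MacPherson \cite[Theorem 3.1]{alpert2021configuration}, so there is no in-paper argument to compare your sketch against. That said, a few remarks on the sketch itself.

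Your column-stratification picture is the right intuition and does underlie the actual argument. However, your description of the model subspaces $X_\sigma$ has a dimension problem. A symbol $\sigma$ with $r$ blocks indexes an $(n-r)$-cell of $\text{cell}(n,w)$, whereas ``columns at prescribed horizontal separations with disks exactly stacked, up to a global translation'' is a one-parameter family regardless of $r$. Already for $w=2$ the symbol $1\,2\,|\,3\,4$ labels a $2$-cell (a product of two $1$-cells), and for $w\ge 3$ a single block of size $k$ contributes a $(k-1)$-dimensional permutohedron, not a point. The degrees of freedom you are missing come from the internal structure of a cluster of horizontally-coincident disks---roughly, how the cluster interpolates among the various vertical orderings of its members---and this is precisely what produces the permutohedral cells and the shuffle face relation. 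Until those parameters are identified, you cannot write down the embedded copy of $\text{cell}(n,w)$, let alone the retraction onto it.

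You correctly flag continuity at column-merging events as the crux, and you are also right that a discrete Morse-theoretic reformulation is one natural way to organize it; that is closer in spirit to how the cited proof actually goes. As it stands, though, what you have is a plan whose central construction---the correct cell models and the flow onto them---is still missing, rather than a proof.
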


This theorem allows us to study $H_{k}\big(\text{cell}(n,w)\big)$ in place of $H_{k}\big(\text{Conf}(n,w)\big)$. We restrict ourselves to $w=2$. 

\begin{defn}
Let $f=\alpha_{1}|\alpha_{2}|\cdots|\alpha_{r}$ be a symbol in $\text{cell}(n)$, where the $\alpha_{i}$ are blocks. We say that a block $\alpha_{i}$ is a \emph{singleton} if it only has one element, and we say that a block $\alpha_{j}$ is a \emph{follower} if the preceding block $\alpha_{j-1}$ is a singleton, whose element is less than every element of $\alpha_{j}$.
\end{defn}

Using discrete Morse Theory, for any $k\ge 0$ one can define a unique cycle $z(e)\in H_{k}\big(\text{cell}(n,2)\big)$ for each symbol $e$ on $n$ labels such that there are $k$ blocks with $2$ elements, every pair of consecutive singletons in $e$ are in decreasing order, and if a $2$-element block in $e$ has its elements in decreasing order, it is a follower. We start by adopting Alpert's conventions for small cycles. There is one type of trivial cycle:  $z(1):=1$. Additionally, there are two fundamental $1$-cycles: $z(1\,2):=1\,2+2\,1$, this is a generating element of $H_{1}\big(\text{cell}(2,2)\big)$, see Figure \ref{b2}, and $z(1|3\,2):= 1|3\,2+3\,1|2+3|2\,1-3\,2|1-2|3\,1-2\,1|3$, i.e., the boundary in $\text{cell}(3)$ of the cell represented by $3\,2\,1$, i.e., the large, inner, blue loop oriented clockwise in Figure \ref{1cycleincell(3,2)}. If $e$ is concatenation product of cells of the form $i_{1}$, $i_{2}\,i_{3}$ where $i_{2}<i_{3}$, and $i_{4}|i_{5}\, i_{6}$ where $i_{5}>i_{6}>i_{4}$, such that $e$ satisfies the ordering restrictions at the beginning of the paragraph we set $z(e)$ to be equal to the concatenation product of the injected cycles $z(i_{1})$, $z(i_{2}\,i_{3})$, and $z(i_{4}|i_{5}\,i_{6})$, e.g., $z(3\,4|7|1|6\,2|5)=z(3\,4)|z(7)|z(1|6\,2)|z(5)$. Alpert proved that cycles of this form are a basis for $H_{k}\big(\text{cell}(n,2)\big)$ \cite[Theorem 4.3]{alpert2020generalized}.

\begin{figure}[h]
\centering
\includegraphics[width = 6 cm]{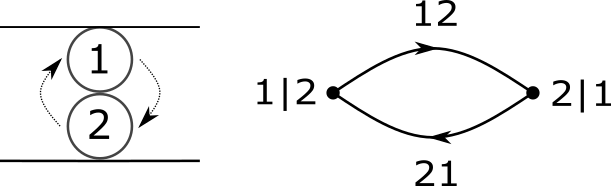}
\caption{On the right, the cycle $z(12)$ in $H_{1}\big(\text{cell}(2,2)\big)$. On the left, the corresponding loop in $\text{Conf}(2,2)$, where the disks labeled $1$ and $2$ perform a full loop around each other.}
\label{b2}
\end{figure}

\begin{figure}[h]
\centering
\includegraphics[width = 6 cm]{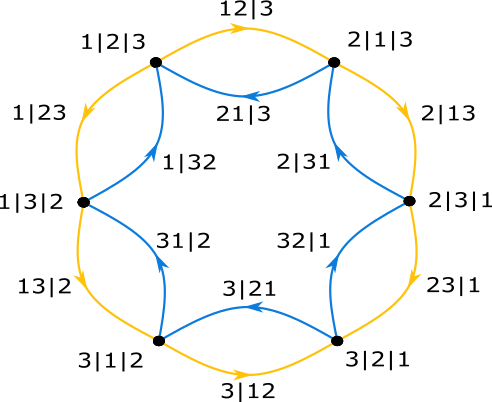}
\caption{The cycle $z(1|3\,2)$ is the blue inside loop oriented clockwise, the cycle $z'(1|3\,2)$ is the average over the blue and yellow loops, both of which are oriented clockwise,}
\label{1cycleincell(3,2)}
\end{figure}

\begin{thm}\label{basistheorem}
(Alpert \cite[Theorem 4.3]{alpert2020generalized}) For $k\ge 0$, a basis for $H_{k}\big(\text{cell}(n,2)\big)$ is given by the classes of the cycles $z(e)$ where $e$ is a symbol on $[n]$ with the following properties:
	\begin{enumerate}
	\item The largest blocks in $e$ have at most $2$ elements.
	\item There are $k$ blocks with $2$ elements in $e$.
	\item Every two consecutive singletons in $e$ are in decreasing order.
	\item If a given 2-element block has its elements in decreasing order, then the block is a follower in $e$.
	\end{enumerate}
\end{thm}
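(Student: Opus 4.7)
The plan is to prove Theorem \ref{basistheorem} using discrete Morse theory on the polyhedral cell complex $\text{cell}(n,2)$. First I would construct an acyclic matching on the face poset of $\text{cell}(n,2)$ whose unmatched (critical) cells are exactly the symbols $e$ satisfying conditions (1)--(4), and then assemble an explicit cycle $z(e)$ for each such critical $e$ out of the small building blocks $z(1)$, $z(i\,j)$, and $z(i_1|i_3\,i_2)$ via the concatenation product. The fundamental theorem of discrete Morse theory will yield a homotopy equivalence between $\text{cell}(n,2)$ and a CW-complex with one cell per critical symbol, furnishing the upper bound on $\dim H_k\bigl(\text{cell}(n,2)\bigr)$, and linear independence of $\{[z(e)]\}$ will realize the matching lower bound.

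To build the matching, the idea is to scan each symbol $e$ from left to right and identify the first position where one of conditions (1)--(4) fails. Depending on the kind of failure, a canonical local move either inserts a bar into an oversized block (matching $e$ downward) or deletes a bar between an ascending consecutive pair of singletons (matching $e$ upward), with an analogous move handling a decreasing $2$-block that is not a follower. A symbol with no failures is critical. The \emph{main obstacle} is proving that this pairing is both well-defined (the canonical move on the partner inverts the move on $e$) and acyclic in Forman's sense, which amounts to verifying that the location of the leftmost defect migrates in a strictly controlled direction under any alternating zigzag in the modified Hasse diagram. This is ultimately a finite combinatorial check, but it is where the bulk of the technical work lies, since one must rule out any cycle of alternating matched and unmatched face relations.

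Once the matching is in place, the cycle $z(e)$ is defined by following the concatenation recipe: if a critical $e$ decomposes as $\alpha_1|\cdots|\alpha_r$ with each $\alpha_j$ being one of the base-case shapes $i_1$, $i_2\,i_3$ with $i_2<i_3$, or $i_4|i_5\,i_6$ with $i_5>i_6>i_4$, set $z(e):=z(\alpha_1)|\cdots|z(\alpha_r)$. The Leibniz rule $\partial(g_1|g_2)=\partial g_1|g_2+(-1)^{b(g_1)}g_1|\partial g_2$ immediately implies that the concatenation of cycles is again a cycle. Linear independence of $\{[z(e)]\}$ then follows from a triangularity argument: under the chain-level projection to the Morse complex determined by the matching, each $z(e)$ maps to $\pm e$ plus critical cells that are strictly smaller in a lexicographic order on symbols inherited from the scanning procedure. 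Combining this triangularity with the Morse-theoretic upper bound on rank shows that $\{[z(e)]\}$ is a basis, completing the proof.
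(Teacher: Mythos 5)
This theorem is not proved in the paper at all: it is imported verbatim from Alpert \cite[Theorem 4.3]{alpert2020generalized}, and the paper only records the construction of the cycles $z(e)$ as concatenation products of $z(1)$, $z(i_2\,i_3)$, and $z(i_4|i_5\,i_6)$. Your outline — an acyclic matching on the face poset of $\text{cell}(n,2)$ with critical cells exactly the symbols satisfying (1)--(4), followed by a triangularity argument showing each $[z(e)]$ hits $\pm e$ plus lower-order critical cells — is the same discrete-Morse strategy the paper attributes to Alpert, so the route is the right one.

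However, as written this is a plan rather than a proof: the two places you yourself identify as "the bulk of the technical work" are exactly the places where nothing is actually verified. First, the matching is not pinned down. Note that inside $\text{cell}(n,2)$ condition (1) is vacuous (every cell already has blocks of size at most $2$ by definition of the subcomplex), so your "insert a bar into an oversized block" move never fires; the entire content of the matching lies in the interaction of conditions (3) and (4) — when an ascending pair of consecutive singletons $a|b$ should be merged upward into a $2$-block versus when a decreasing $2$-block that is not a follower should be split downward — and it is not clear from your description that these two moves are mutually inverse, produce a genuine involution on the non-critical cells, or that the resulting matching is acyclic. The "leftmost defect migrates in a controlled direction" claim is the whole theorem at that point and needs an actual argument. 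Second, the triangularity of $z(e)$ against the critical cells requires knowing the explicit expansion of $z(i_4|i_5\,i_6)$ (it is the boundary of a $2$-cell of $\text{cell}(3)$ that is missing from $\text{cell}(3,2)$, not a concatenation of smaller loops), and you never exhibit which critical cells appear in $z(e)$ with which coefficients, so the claimed leading-term structure is asserted rather than established. To make this a proof you would need to either carry out these verifications or, as the paper does, cite Alpert's Theorem 4.3 directly.
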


\begin{defn}
We say that a cell $e\in \text{cell}(n,2)$ is \emph{critical} if it satisfies the requirements of Theorem \ref{basistheorem}.
\end{defn}

The symmetric group acts by permuting the labels of a cell in $\text{cell}(n,w)$ and this action extends to $H_{k}\big(\text{cell}(n,w)\big)$. Unfortunately, it is rather hard to determine the exact structure of $H_{k}(\text{cell}(n,2))$ as an $S_{n}$-representation given the basis from Theorem \ref{basistheorem}. Namely as an element in $S_{n}$ need not send a critical cell to a critical cell, and $H_{k}\big(\text{cell}(n,2)\big)$ is not irreducible as an $S_{n}$-representation for $k\ge 1$ and $n\ge 3$. In Section \ref{newH1basis} we introduce a basis for $H_{1}\big(\text{cell}(n,2);\Q\big)$ distinct from the basis of Theorem \ref{basistheorem} that allows us to approach this problem in Sections \ref{newHnbasis} and \ref{putting it all together}.

\section{FI$_{d}$-modules}

Alpert proved that $H_{k}\big(\text{cell}(\bullet,2)\big)$ is a finitely generated FI$_{k+1}$-module \cite[Theorem 6.1]{alpert2020generalized}, and we will use this to decompose $H_{k}\big(\text{cell}(n,2);\Q\big)$ as a direct sum of induced $S_{n}$-representations. In this section we define the categories FI$_{d}$ and FI$_{d}$-mod. The second of these categories, studied in \cite{ramos2017generalized} and proven to be Noetherian in \cite{sam2017grobner}, will be play a central role in our calculations; for more, also see \cite{ramos2019configuration} and \cite{sam_snowden_2019}. Before we consider FI$_{d}$ and FI$_{d}$-mod, we recall the definition of the simpler categories FB and FB-mod.

\begin{defn}
Let \emph{FB} denote the category whose objects are finite sets and whose morphisms are bijective maps.
\end{defn}

Note that FB is equivalent to its skeleton which has one object $[n]:=\{1,\dots, n\}$ for each nonnegative integer. From now on we think of FB as this skeleton.

\begin{defn}
Given a commutative ring $R$, the category of \emph{FB-modules over $R$} has functors from FB to $R$-mod as objects, and has natural transformations between these functors as morphisms.
\end{defn}

Given an FB-module $U$, we write $U_{n}$ for $U([n])$. A priori there are no morphisms between $U_{n}$ and $U_{m}$, for $m\neq n$, since there are no morphisms between sets of different cardinalities in FB. Additionally, $R[S_{n}]$ acts on $U_{n}$ since $S_{n}\cong \text{End}_{\text{FB}}([n])$. Thus, we can think of an FB-module as a sequence of symmetric group representations with no relations between representations of symmetric groups of different cardinalities. If we allow injections between sets of different cardinality we get the category FI.

\begin{defn}
Let \emph{FI} denote the category whose objects are finite sets and whose morphisms are injective mappings.
\end{defn}

Just like in the case of FB, the category FI is equivalent to its skeleton with one object $[n]$ for each nonnegative integer. From now on we think of FI as this skeleton.

\begin{defn}
Given a commutative ring $R$, the category of \emph{FI-modules over $R$} has functors from FI to $R$-mod as objects, and has natural transformations between these functors as morphisms.
\end{defn}

As we did for FB, given an FI-module $V$, we write $V_{n}$ for $V([n])$. Like an FB-module, an FI-module can be thought of as a sequence of representations of the symmetric groups of various cardinalities, but with maps from $V_{n}$ to $V_{m}$ for $n<m$ arising from injections $[n]\hookrightarrow [m]$. For more information about the categories FB, FB-mod, FI, and FI-mod see \cite{church2015fi} or \cite{wilson2018introduction}.

FI does not make distinctions between elements in the codomain of a morphism that are not in the image. One way of adding more information to FI is to color these elements. Doing so yields a family of categories denoted FI$_{d}$.

\begin{defn}
For fixed $d\ge 1$ the category \emph{FI$_{d}$} has objects finite sets and morphisms pairs $(f,g)$ where $f$ is an injection of finite sets and $g$ is a $d$-coloring of the complement of the image of $f$, i.e., a map from the complement of the image of $f$ to $[d]:=\{1,\dots, d\}$.
\end{defn}

If the domain of $f$ is the codomain of $f'$ we can compose the morphisms $(f,g)$ and $(f', g')$ as follows

\[
(f,g)\circ(f', g')=(f\circ f',h),
\]
where
\[
h(x)=\begin{cases} g(x)&\text{if }x\notin \text{im}f\\g'(f^{-1}(x))&\text{otherwise.}\end{cases}
\]

See Figure \ref{FIdpic} for an example of the composition of two morphisms.

\begin{figure}[h!]
\centering
\includegraphics[width = 6 cm]{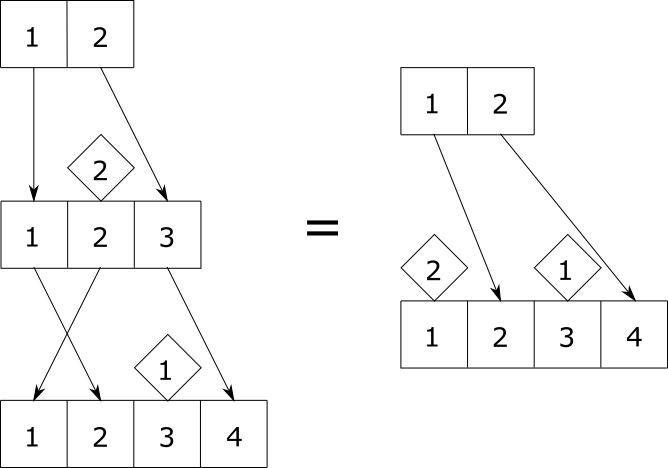}
\caption{An example of the composition of two morphisms in FI$_{2}$.}
\label{FIdpic}
\end{figure}

Just like for FB and FI, the category FI$_{d}$ is equivalent to its skeleton where there is one object $[n]$ for each nonnegative integer $n$. We think of FI$_{d}$ as this skeleton.

\begin{defn}
Given a commutative ring $R$, the category of \emph{FI$_{d}$-modules over $R$} has functors from FI$_{d}$ to $R$-mod as objects, and has natural transformations between these functors as morphisms.
\end{defn}

Given an FI$_{d}$-module $W$, we write $W_{n}$ for $W([n])$. While an FB-module can be thought of as a sequence of symmetric group representations without maps between them, for an FI$_{d}$-module there are numerous morphisms from representations of smaller symmetric groups to bigger ones. By forgetting the coloring an FI$_{d}$-mod can viewed as an FI-module.

\begin{defn}
Given an FI$_{d}$-module $W$, an \emph{FI$_{d}$-submodule} $W'$ of $W$ is a sequence of symmetric group representations $W'_{n}\subseteq W_{n}$ that is closed under the induced actions of the FI$_{d}$-morphisms.
\end{defn}

We define free FI$_{d}$-modules and discuss their structure as symmetric group representations, as this will be of great use in our attempt to decompose $H_{k}\big(\text{cell}(n,2);\Q\big)$ into induced representations.

\begin{defn}
For a nonnegative integer $m$, the \emph{free FI$_{d}$-module generated in degree $m$}, denoted $M^{FI_{d}}(m)$ is
\[
R\big[\text{Hom}_{\text{FI}_{d}}([m], \bullet)\big].
\]
\end{defn}

\begin{defn}
If $U_{m}$ is an $S_{m}$-representation, the \emph{free FI$_{d}$-module relative to $U_{m}$}, denoted $M^{FI_{d}}(U_{m}),$ is
\[
M^{FI_{d}}(U_{m}):=M^{FI_{d}}(m)\otimes_{R[S_{m}]}U_{m}.
\]

Since an FB-module is a sequence of symmetric group representations with no maps between them we are able to build an FI$_{d}$-module from an FB-module $U$:
\[
M^{FI_{d}}(U):=\bigoplus_{m\ge 0}M^{FI_{d}}(m)\otimes_{R[S_{m}]}U_{m}.
\]
is the free FI$_{d}$-module relative to $U$.
\end{defn}

\begin{defn}
Let $W$ be an FI$_{d}$-module. We say that $W$ is \emph{finitely generated in degree $\le k$} if for all $n$, $W_{n}$ is a quotient of $\oplus_{0\le j\le k}M^{FI_{d}}(W_{j})_{n}$ and $W_{j}$ is finitely generated as an $S_{j}$-representation for all $0\le j\le k$.
\end{defn}

\begin{prop}\label{dimofsimpleFId}
\[
\dim\big(M^{FI_{d}}(m)_{n}\big)=\frac{n!}{(n-m)!}d^{n-m}.
\]
\end{prop}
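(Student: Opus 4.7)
The plan is to unpack the definition of $M^{FI_d}(m)_n$ as a free $R$-module on the set $\text{Hom}_{FI_d}([m],[n])$ and then count morphisms. Since $M^{FI_d}(m)_n = R[\text{Hom}_{FI_d}([m],[n])]$, its rank as an $R$-module equals the cardinality of $\text{Hom}_{FI_d}([m],[n])$, so the task reduces to a pure counting problem.

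By the definition of $FI_d$, a morphism $[m]\to [n]$ is a pair $(f,g)$ where $f\colon [m]\hookrightarrow [n]$ is an injection and $g$ is a function from $[n]\setminus \text{im}(f)$ to $[d]$. First I would count the injections: the number of injections $[m]\hookrightarrow [n]$ is the falling factorial $n(n-1)\cdots(n-m+1) = \frac{n!}{(n-m)!}$, obtained by choosing images for $1,2,\dots,m$ in order. (If $n<m$ there are zero injections, in agreement with the stated formula.) For each such injection $f$, the complement $[n]\setminus \text{im}(f)$ has exactly $n-m$ elements, and a $d$-coloring is an arbitrary function to $[d]$, so there are $d^{n-m}$ colorings.

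Multiplying the two independent counts gives
\[
|\text{Hom}_{FI_d}([m],[n])| = \frac{n!}{(n-m)!}\, d^{n-m},
\]
which is the desired dimension. There is essentially no obstacle: the only subtlety is being careful that the coloring data $g$ is independent of the choice of injection $f$ (which it is, since $g$ depends only on the complement of the image as an unordered set of size $n-m$), and that compositions in $FI_d$ play no role in the counting because we are counting a hom-set, not quotienting by any relations.
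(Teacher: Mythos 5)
Your proof is correct and follows essentially the same route as the paper's: identify $\dim\big(M^{FI_{d}}(m)_{n}\big)$ with the cardinality of $\text{Hom}_{\text{FI}_{d}}([m],[n])$, count the $\frac{n!}{(n-m)!}$ injections, and multiply by the $d^{n-m}$ colorings of the complement. The extra remarks on the $n<m$ case and the independence of the two counts are fine but not needed beyond what the paper already does.
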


\begin{proof}
Since
\[
M^{FI_{d}}(m)_{n}=R\big[\text{Hom}_{\text{FI}_{d}}([m], [n])\big]
\]
it suffices to count the number of injections from $[m]$ to $[n]$. There are $n$ choices of where to send $1\in[m]$, $n-1$ choices of where to $2\in[n]$ given that it cannot be sent to where $1$ was sent, so on and so forth. Of the remaining $n-m$ elements in $[n]$ not in the image of a given injection each can be colored one of $d$ colors. Thus,
\[
\dim\big(M^{FI_{d}}(m)_{n}\big)=\frac{n!}{(n-m)!}d^{n-m}.
\]
\end{proof}

\begin{prop}\label{buidanFIdfromanFB}
\[
\dim\big(M^{FI_{d}}(U_{m})_{n}\big)=\dim(U_{m})\binom{n}{m}d^{n-m}.
\]
\end{prop}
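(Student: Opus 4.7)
The plan is to use Proposition \ref{dimofsimpleFId} together with the observation that $M^{FI_{d}}(m)_{n}$ is a free right $R[S_{m}]$-module, so that tensoring over $R[S_{m}]$ with $U_{m}$ multiplies the rank by $\dim(U_{m})$.

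First I would identify the right $S_{m}$-action on $M^{FI_{d}}(m)_{n}=R[\text{Hom}_{\text{FI}_{d}}([m],[n])]$. A morphism $(f,g)\in\text{Hom}_{\text{FI}_{d}}([m],[n])$ consists of an injection $f\colon[m]\hookrightarrow[n]$ together with a $d$-coloring $g$ of $[n]\setminus\text{im}(f)$. An element $\sigma\in S_{m}=\text{End}_{\text{FI}_{d}}([m])$ acts on the right by precomposition: $(f,g)\cdot\sigma=(f\circ\sigma,g)$. This action does not touch the coloring $g$ or the image $\text{im}(f)$; it only permutes which elements of $[m]$ get sent to which elements of $\text{im}(f)$. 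Hence the $S_{m}$-action is free, and two morphisms $(f,g)$ and $(f',g')$ lie in the same orbit if and only if they have the same image and the same coloring of its complement.

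Counting orbits: one chooses the $m$-element subset $\text{im}(f)\subseteq[n]$ in $\binom{n}{m}$ ways, and then a $d$-coloring of the $(n-m)$-element complement in $d^{n-m}$ ways. Picking a set-theoretic section of the orbit map therefore gives a right $R[S_{m}]$-module decomposition
\[
M^{FI_{d}}(m)_{n}\cong \bigoplus_{i=1}^{\binom{n}{m}d^{n-m}}R[S_{m}],
\]
exhibiting $M^{FI_{d}}(m)_{n}$ as a free $R[S_{m}]$-module of rank $\binom{n}{m}d^{n-m}$. (As a sanity check, this is consistent with Proposition \ref{dimofsimpleFId}, since $m!\cdot\binom{n}{m}d^{n-m}=\tfrac{n!}{(n-m)!}d^{n-m}$.)

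Finally, applying $-\otimes_{R[S_{m}]}U_{m}$ to this decomposition yields
\[
M^{FI_{d}}(U_{m})_{n}=M^{FI_{d}}(m)_{n}\otimes_{R[S_{m}]}U_{m}\cong\bigoplus_{i=1}^{\binom{n}{m}d^{n-m}}U_{m},
\]
from which the stated formula $\dim\big(M^{FI_{d}}(U_{m})_{n}\big)=\dim(U_{m})\binom{n}{m}d^{n-m}$ follows immediately. There is no real obstacle here; the only point requiring care is verifying that the right $S_{m}$-action fixes both the image and the coloring, so that orbits are in bijection with (subset, coloring) pairs and freeness holds.
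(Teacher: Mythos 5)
Your proof is correct and follows essentially the same route as the paper: reduce to Proposition \ref{dimofsimpleFId} and then account for the tensor product over $R[S_{m}]$. The one difference is that the paper simply divides by $\dim\big(R[S_{m}]\big)$, a step that is only valid because $M^{FI_{d}}(m)_{n}$ is free as a right $R[S_{m}]$-module; you make this explicit by checking that the right $S_{m}$-action by precomposition is free with orbits indexed by pairs (image of $f$, coloring of the complement), which is exactly the justification the paper's computation implicitly relies on. So your argument is, if anything, the more complete of the two.
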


\begin{proof}
By Proposition \ref{dimofsimpleFId} 
\[
\dim(M^{FI_{d}}(m)_{n}\big)=\frac{n!}{(n-m)!}d^{n-m}.
\]
Since $M^{FI_{d}}(U_{m})_{n}=M^{FI_{d}}(m)_{n}\otimes_{R[S_{m}]}U_{m}$,
\[
\dim\big(M^{FI_{d}}(m)_{n}\otimes_{R[S_{m}]}U_{m}\big)=\dim\left(M^{FI_{d}}(m)_{n}\right)\frac{\dim(U_{m})}{\dim\big(R[S_{m}]\big)}.
\]
This simplifies to
\[
\frac{n!}{(n-m)!}d^{n-m}\frac{\dim(U_{m})}{m!}=\dim(U_{m})\binom{n}{m}d^{n-m}.
\]
\end{proof}

Now that we know the dimension of a free FI$_{d}$-module in degree $n$, we want to determine how it decomposes into irreducible representations of $S_{n}$.

\begin{defn}
A \emph{$h$-composition} of $n$ is an $h$-tuple $a:=(a_{1},\dots, a_{h})$ of nonnegative integers such that $\sum_{i=1}^{h}a_{i}=n$. Additionally, we write $|a|=n$.
\end{defn}

If $a=(a_{1},\dots, a_{h})$ we set $S_{a}:=S_{a_{1}}\times \cdots \times S_{a_{h}}\subseteq S_{|a|}\cong S_{n}$.

\begin{prop}\label{freeFId}
(Ramos \cite[Proposition 3.4]{ramos2017generalized}) Let $U_{m}$ be an $S_{m}$-representation. Then, 
\[
M^{FI_{d}}(U_{m})_{n}=\bigoplus_{a=(a_{1},\dots, a_{d}), |a|=n-m}\emph{Ind}^{S_{n}}_{S_{m}\times S_{a}}W_{m}\boxtimes R,
\]
where $R$ is the trivial representation of $S_{a}$.
\end{prop}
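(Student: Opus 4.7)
The plan is to unpack the definition of $M^{FI_d}(U_m)_n$ as $R[\mathrm{Hom}_{FI_d}([m],[n])] \otimes_{R[S_m]} U_m$ and split the set of $FI_d$-morphisms into pieces indexed by the coloring data, each of which is a single $(S_n, S_m)$-biset orbit.

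First I would stratify $\mathrm{Hom}_{FI_d}([m],[n])$ as follows. Each morphism is a pair $(f,g)$ with $f:[m]\hookrightarrow [n]$ an injection and $g:[n]\setminus\mathrm{im}(f)\to[d]$ a coloring; define its \emph{color composition} to be the $d$-tuple $a(f,g)=(a_1,\dots,a_d)$ where $a_i = |g^{-1}(i)|$, so $|a(f,g)|=n-m$. For each composition $a$ of $n-m$ into $d$ nonnegative parts, let $X_a\subseteq\mathrm{Hom}_{FI_d}([m],[n])$ be the subset of morphisms with color composition $a$. Both the left $S_n$-action (post-composition) and the right $S_m$-action (pre-composition on $f$) preserve the color composition, so $X_a$ is an $(S_n,S_m)$-subbiset and we get an $(S_n,S_m)$-bimodule decomposition
\[
R[\mathrm{Hom}_{FI_d}([m],[n])] \;=\; \bigoplus_{|a|=n-m} R[X_a].
\]

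Next I would analyze a single $X_a$. Fix the standard pair $(f_0,g_0)\in X_a$ in which $f_0$ is the inclusion $[m]\hookrightarrow [n]$ and $g_0$ colors the consecutive blocks of $[n]\setminus[m]$ of sizes $a_1,a_2,\dots,a_d$ by the colors $1,2,\dots,d$ respectively. A direct check shows that the stabilizer of $(f_0,g_0)$ in $S_n$ is exactly $S_a:=S_{a_1}\times\cdots\times S_{a_d}\subseteq S_{n-m}\subseteq S_n$, and that $|X_a|=\tfrac{n!}{(n-m)!}\cdot\tfrac{(n-m)!}{a_1!\cdots a_d!}=[S_n:S_a]$, so $X_a$ is a single $S_n$-orbit and $X_a\cong S_n/S_a$ as left $S_n$-sets. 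For the commuting right $S_m$-action I would verify the identity $(f_0,g_0)\cdot\tau = \tau\cdot(f_0,g_0)$ for every $\tau\in S_m$, where on the right $\tau$ is viewed as an element of $S_m\subseteq S_n$ acting on $[m]$ and fixing $[n]\setminus[m]$. This implies that, under the isomorphism $X_a\cong S_n/S_a$, the right $S_m$-action becomes right multiplication by the image of $S_m$ in $S_n$; since $S_m$ and $S_a$ act on disjoint subsets of $[n]$ they commute, so $S_m\times S_a$ sits inside $S_n$ and this right action is well-defined on cosets.

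To finish, I would combine these observations. Since $R[X_a]\cong R[S_n/S_a]\cong R[S_n]\otimes_{R[S_a]} R$ as $(S_n,S_m)$-bimodules (with $S_m$ acting via the embedding above, commuting with $S_a$),
\[
R[X_a]\otimes_{R[S_m]} U_m \;\cong\; R[S_n]\otimes_{R[S_m\times S_a]} (U_m\boxtimes R) \;=\; \mathrm{Ind}^{S_n}_{S_m\times S_a}(U_m\boxtimes R),
\]
where $R$ denotes the trivial $S_a$-representation. Summing over all compositions $a$ of $n-m$ with $d$ parts yields the claimed decomposition
\[
M^{FI_d}(U_m)_n \;=\; \bigoplus_{|a|=n-m} \mathrm{Ind}^{S_n}_{S_m\times S_a}(U_m\boxtimes R).
\]

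The only real subtlety is the third step: translating the right $S_m$-action on $X_a$ (given by pre-composition with $f$) into a right multiplication on $S_n/S_a$ and checking that $S_m$ and $S_a$ genuinely commute inside $S_n$. Once this bookkeeping is done, the rest is a standard double-coset/induction identification, and the orbit-stabilizer count plus a dimension check against Proposition~\ref{buidanFIdfromanFB} can be used as a sanity test.
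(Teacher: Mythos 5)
The paper does not prove this proposition; it is quoted from Ramos \cite[Proposition 3.4]{ramos2017generalized} without proof, so there is no in-paper argument to compare against. Your proposal is a correct, self-contained proof along the standard lines: the stratification of $\mathrm{Hom}_{\mathrm{FI}_d}([m],[n])$ by color composition is preserved by both the left $S_{n}$- and right $S_{m}$-actions, the orbit--stabilizer count $|X_{a}|=n!/(a_{1}!\cdots a_{d}!)=[S_{n}:S_{a}]$ correctly identifies each stratum as the single transitive $S_{n}$-set $S_{n}/S_{a}$, and the verification that the right $S_{m}$-action on the basepoint $(f_{0},g_{0})$ agrees with left multiplication by the image of $S_{m}$ in $S_{n}$ (which commutes with $S_{a}$) is exactly the bookkeeping needed to convert $R[X_{a}]\otimes_{R[S_{m}]}U_{m}$ into $R[S_{n}]\otimes_{R[S_{m}\times S_{a}]}(U_{m}\boxtimes R)=\mathrm{Ind}^{S_{n}}_{S_{m}\times S_{a}}U_{m}\boxtimes R$. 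Note also that you have silently corrected the typo $W_{m}$ for $U_{m}$ in the statement, which is the right reading. As a consistency check, summing the dimensions of the induced modules over all $d$-part compositions $a$ of $n-m$ recovers $\dim(U_{m})\binom{n}{m}d^{n-m}$, matching Proposition \ref{buidanFIdfromanFB}.
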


When is an FB-module $U$ with maps $[i_{n,j}]:U_{n}\to U_{n+1}$ for all $n$ and $1\le j\le d$ an FI$_{d}$-module? The following definitions and lemmas give us necessary and sufficient conditions on the maps $[i_{n,j}]$ for this to be the case.

\begin{defn}
Let $U$ be an FB-module, and for all $n$ and all $1\le j\le d$ let there exist maps $[i_{n,j}]:U_{n}\to U_{n+1}$. We call the map $[i_{n,j}]$ the \emph{$j^{\text{th}}$ high-insertion map}.
\end{defn}

\begin{defn}
Let $U$ be an FB-module with $d$ high-insertion maps $[i_{n,j}]$ for all $n$. We say \emph{high-insertion maps commute with permutations} if for every $j$, every $n$, and every $\sigma\in S_{n}$, we have
\[
[i_{n,j}]\circ[\sigma]=[\tilde{\sigma}]\circ[i_{n,j}]
\]
where $\tilde{\sigma}\in S_{n+1}$ is the image of $\sigma$ in $S_{n+1}$ under the standard inclusion $[n]\hookrightarrow [n+1]$.
\end{defn}

\begin{defn}
Let $U$ be an FB-module with $d$ high-insertion maps $[i_{n,j}]$ for all $n$. We say \emph{insertions are unordered} if for every pair $j$, $l$ and every $n$, we have the following relation on maps from $U_{n}$ to $U_{n+2}$:
\[
[(n+1\, n+2)]\circ[i_{n+1,j}]\circ[i_{n,l}]=[i_{n+1,l}]\circ[i_{n,j}].
\]
\end{defn}

\begin{lem}
(Alpert \cite[Lemma 5.1]{alpert2020generalized}) Suppose we have an FB-module $U$ and $d$ high-insertion maps $[i_{n,j}]$ from $U_{n}$ to $U_{n+1}$ for all $n$. If the high-insertion maps commute with permutations and insertions are unordered, then $U$, along with these high-insertion maps, forms an FI$_{d}$-module. 
\end{lem}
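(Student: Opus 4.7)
The plan is to present FI$_{d}$ by generators and relations and then use the two hypotheses to verify that $U$, equipped with its high-insertion maps, extends to a functor out of FI$_{d}$. As generators I take the automorphisms (elements of $S_{n}$ for each $n$) together with the ``basic'' morphisms $\iota_{n,j}\colon [n]\to[n+1]$ that include $[n]$ as the first $n$ elements and color the remaining element $n+1$ by $j\in[d]$. Any morphism $(f,g)\colon[n]\to[m]$ in FI$_{d}$ factors as $\tau\circ\iota_{m-1,c_{m-n}}\circ\cdots\circ\iota_{n,c_{1}}$, where $p_{1}<\cdots<p_{m-n}$ are the new positions in $[m]$, $c_{i}=g(p_{i})$ are their colors, and $\tau\in S_{m}$ is determined by $f$ together with the sequence $(p_{i})$. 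A direct check shows that the only relations among these generators, beyond those internal to each $S_{n}$, are: (a) $\iota_{n,j}\circ\sigma=\widetilde{\sigma}\circ\iota_{n,j}$ for $\sigma\in S_{n}$, and (b) $\iota_{n+1,l}\circ\iota_{n,j}=(n+1\ n+2)\circ\iota_{n+1,j}\circ\iota_{n,l}$ for $j,l\in[d]$.

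Granted this presentation, define the putative FI$_{d}$-module $\widetilde{U}$ by $[n]\mapsto U_{n}$ on objects, by the existing $S_{n}$-action on automorphisms, and by $\iota_{n,j}\mapsto[i_{n,j}]$ on basic inclusions. The hypothesis that high-insertion maps commute with permutations is exactly relation (a), and the hypothesis that insertions are unordered is exactly relation (b). Consequently the assignment respects all defining relations of FI$_{d}$ and extends uniquely to a functor $\widetilde{U}\colon \mathrm{FI}_{d}\to R\text{-mod}$, giving $U$ an FI$_{d}$-module structure that refines its FB-module structure.

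The main technical step is establishing the presentation above, in particular that relations (a) and (b), together with the $S_{n}$-relations, suffice. I would argue this by a normal-form argument: fix a canonical way of writing each morphism $(f,g)$ as a fixed-order insertion sequence followed by a single permutation on the left, then reduce any expression in the generators representing the same morphism to this normal form by repeated applications of (a), to slide permutations past insertions, and (b), to reorder adjacent insertions. The bookkeeping must track how positions and colors evolve under each rewriting move, and one verifies termination and confluence so that the normal form is unique. This rewriting argument is the only real obstacle; once the presentation is in hand, the extension to a functor is formal, since both sides of each defining relation are, by hypothesis, carried to equal maps in $R\text{-mod}$.
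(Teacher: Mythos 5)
The paper does not actually prove this lemma; it is quoted verbatim from Alpert \cite[Lemma 5.1]{alpert2020generalized}, so there is no in-paper argument to compare yours against. Judged on its own terms, your strategy is the standard and correct one: FI$_{d}$ is generated by the symmetric groups together with the one-step insertions $\iota_{n,j}$, and the two hypotheses are precisely the images of the two nontrivial relations. The computation confirming that relations (a) and (b) do hold in FI$_{d}$ under the composition rule $(f,g)\circ(f',g')=(f\circ f',h)$ is routine, and your canonical factorization $\tau\circ\iota_{m-1,c_{m-n}}\circ\cdots\circ\iota_{n,c_{1}}$ (with $p_{1}<\cdots<p_{m-n}$ and $\tau$ pinned down by $f$ and the $p_{i}$) is the right normal form.

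The gap is the one you name yourself: the assertion that (a), (b), and the internal $S_{n}$-relations constitute a \emph{presentation} of FI$_{d}$ is the entire content of the lemma, and "a direct check shows" followed by "one verifies termination and confluence" is a promise rather than a proof. Nothing here would fail, but as written the proposal reduces the lemma to an unproved combinatorial claim of comparable difficulty. Two ways to close it: (i) carry out the rewriting argument, which is cleanest if you first slide all permutations to the left using (a), then sort adjacent insertions into increasing target position using (b), and finally observe that the number of normal forms $\bigl(\tau,(c_{i})\bigr)$ modulo the residual ambiguity equals $\#\mathrm{Hom}_{\mathrm{FI}_{d}}([n],[m])=\frac{m!}{(m-n)!}d^{m-n}$, so no further relations can hold; or (ii) avoid the presentation altogether by \emph{defining} $U(f,g)$ to be the composite of high-insertion maps and the permutation action prescribed by the canonical factorization, and then verifying directly that $U\bigl((f',g')\circ(f,g)\bigr)=U(f',g')\circ U(f,g)$ — this single functoriality check uses (a) to push $\tau$ past the later insertions and (b) to re-sort them, and it sidesteps any uniqueness-of-normal-form or confluence issue. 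Route (ii) is the one such arguments in the literature typically take, and it makes the bookkeeping you are worried about concrete and finite.
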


\begin{prop}\label{makesmallerFId}
If $W$ is an FI$_{d}$-module, then it is also an FI$_{d-1}$-module.
\end{prop}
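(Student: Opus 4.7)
The plan is to exhibit a faithful functor $\iota \colon \mathrm{FI}_{d-1} \to \mathrm{FI}_d$ and then obtain the desired $\mathrm{FI}_{d-1}$-module structure on $W$ by precomposing $W \colon \mathrm{FI}_d \to R\text{-mod}$ with $\iota$. The functor is the obvious one: $\iota$ is the identity on objects, and on morphisms it sends a pair $(f,g)$, where $f \colon [m] \hookrightarrow [n]$ is an injection and $g \colon [n]\setminus \mathrm{im}\,f \to [d-1]$ is a $(d-1)$-coloring, to the pair $(f, j \circ g)$, where $j \colon [d-1] \hookrightarrow [d]$ is the standard inclusion. In other words, any $(d-1)$-coloring of the complement is automatically a $d$-coloring whose values avoid the extra color $d$.

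First I would verify that $\iota$ is a functor. Identities are preserved because the identity morphism in either category is $(\mathrm{id}_{[n]}, \varnothing\text{-coloring})$, and $\iota$ fixes this. For composition, recall that the composite of $(f,g)$ and $(f',g')$ in $\mathrm{FI}_{d-1}$ has coloring $h$ defined by $h(x)=g(x)$ for $x\notin \mathrm{im}\,f$ and $h(x)=g'(f^{-1}(x))$ otherwise. Applying $j$ pointwise commutes with this case analysis, so $\iota$ respects composition.

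Given this functor, the composition $W \circ \iota \colon \mathrm{FI}_{d-1} \to R\text{-mod}$ is a functor, hence an $\mathrm{FI}_{d-1}$-module; on objects it agrees with $W$, so we write $(W \circ \iota)_n = W_n$ and consider $W$ itself as carrying this $\mathrm{FI}_{d-1}$-structure. Since the underlying symmetric group representations $W_n$ are unchanged (the endomorphisms $S_n = \mathrm{End}_{\mathrm{FI}_{d-1}}([n])$ map identically into $\mathrm{End}_{\mathrm{FI}_{d}}([n])$ under $\iota$), this gives a bona fide $\mathrm{FI}_{d-1}$-module whose underlying $\mathrm{FB}$-module is the original one.

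Since the proof is a formal pullback along an inclusion of indexing categories, there is no genuine obstacle; the only thing worth being careful about is checking that $\iota$ is well-defined at the level of composition, but this follows immediately from the definitions. One could equivalently phrase the argument by starting from the high-insertion description: given the $d$ high-insertion maps $[i_{n,j}]$ on the FB-module underlying $W$ (for $1\le j\le d$), simply retain only the first $d-1$ of them; they automatically satisfy the compatibility hypotheses of Alpert's Lemma 5.1 because the full family does, and so they define the desired $\mathrm{FI}_{d-1}$-module structure.
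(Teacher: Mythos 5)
Your proof is correct. The paper states this proposition without giving any proof (it is treated as routine), and your argument---precomposing $W$ with the evident functor $\mathrm{FI}_{d-1}\to\mathrm{FI}_{d}$ that is the identity on objects and postcomposes each coloring with the inclusion $[d-1]\hookrightarrow[d]$, which visibly preserves identities and the composition rule for colorings---is exactly the standard justification the paper is implicitly relying on; your alternative phrasing via discarding one of the $d$ high-insertion maps and reapplying Alpert's Lemma 5.1 is likewise valid.
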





The symmetric group action on $\text{cell}(\bullet,2)$ extends to an action on $H_{k}\big(\text{cell}(\bullet,2)\big)$ for all $k$, meaning that the latter is an FB-module. Thus, if we can define $d$ high-insertion maps and check if that they commute with permutations and are unordered, it follows that $H_{k}\big(\text{cell}(\bullet,2)\big)$ is an FI$_{d}$-module. Alpert was able to do that, showing that there exist $k+1$ high-insertion maps in $H_{k}\big(\text{cell}(\bullet,2)\big)$ and that they have the desired properties \cite[Theorem 6.1]{alpert2020generalized}.

\begin{defn}
Let $e$ be a critical cell written as a unique concatenation product of images of the cells $1$, $1\,2$, and $1|3\,2$ under order-preserving injections. Each image of $1\,2$ and $1|3\,2$ is said to be a \emph{barrier}.
\end{defn}

\begin{defn}
Consider $H_{k}\big(\text{cell}(n,2)\big)$, and let $0\le j\le k$. Given a critical cell $e$ of $\text{cell}(n,2)$, we define the \emph{$j^{\text{th}}$ high-insertion map $\iota_{j}(e)$} to be the map that arises when we insert a block containing the label  $n+1$, right after the $j^{\text{th}}$ barrier of $e$, or as the first block, if $j=0$. Observe that the resulting cell is critical. Thus, these maps give rise to maps on homology $[\iota_{j}]:H_{k}\big(\text{cell}(n,2)\big)\to H_{k}\big(\text{cell}(n+1,2)\big)$. Given a homology class $\sum a_{e}z(e)\in H_{k}\big(\text{cell}(n,2)\big)$, we set 
\[
[\iota_{j}]\sum a_{e}z(e):=\sum a_{e}z\big(\iota_{j}(e)\big)\in H_{k}\big(\text{cell}(n+1,2)\big).
\]
\end{defn}

\begin{exam}
Consider the critical cell $2|5\,3|1|4\,6$. We have that $z(2|5\,3|1|4\,6)\in H_{2}\big(\text{cell}(6,2)\big)$, and 
\[
\iota_{0}(2|5\,3|1|4\,6)=7|2|5\,3|1|4\,6\mbox{,	}\iota_{1}(2|5\,3|1|4\,6)=2|5\,3|7|1|4\,6\mbox{, and }\iota_{2}(2|5\,3|1|4\,6)=2|5\,3|1|4\,6|7.
\]
These maps induce the maps 
\[
[\iota_{0}]z(2|5\,3|1|4\,6)=z\big(\iota_{0}(2|5\,3|1|4\,6)\big)=z(7|2|5\,3|1|4\,6)=z(7)|z(2|5\,3)|z(1)|z(4\,6),
\]
\[
[\iota_{1}]z(2|5\,3|1|4\,6)=z\big(\iota_{1}(2|5\,3|1|4\,6)\big)=z(2|5\,3|7|1|4\,6)=z(2|5\,3)|z(7)|z(1)|z(4\,6),
\]
and 
\[
[\iota_{2}]z(2|5\,3|1|4\,6)=z\big(\iota_{2}(2|5\,3|1|4\,6)\big)=z(2|5\,3|1|4\,6|7)=z(2|5\,3)|z(1)|z(4\,6)|z(7).
\]
\end{exam}

Alpert proved that these high-insertion maps commute with permutations and are unordered.

\begin{lem}
(Alpert \cite[Lemma 6.4]{alpert2020generalized}) For each $k$ and $n$, the $S_{n}$-actions and high-insertion maps on the homology groups $H_{k}(\text{cell}(n,2))$ have the properties high-insertion maps commute with permutations and high-insertions are unordered.
\end{lem}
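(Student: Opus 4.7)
The plan is to lift each high-insertion map to a chain-level operation that is manifestly $S_n$-equivariant, and to deduce both properties at the chain level (with a single boundary computation handling the one case where chain-level equality fails). Recall that every critical cycle $z(e)$ decomposes as a concatenation product $z(e) = F_1 | F_2 | \cdots | F_r$ whose factors are images of $z(1)$, $z(1\,2)$, or $z(1|3\,2)$ under order-preserving injections. On chains of this form and their $S_n$-translates, I define $\bar\iota_j$ by inserting the new singleton factor $(n+1)$ into the concatenation product immediately after the $j^{\text{th}}$ non-singleton factor (at the front if $j=0$), with sign dictated by the Leibniz rule $\partial(g_1|g_2) = \partial g_1 | g_2 + (-1)^{b(g_1)} g_1 | \partial g_2$ so that $\bar\iota_j$ commutes with $\partial$ on the relevant subcomplex. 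By construction $\bar\iota_j(z(e)) = z(\iota_j(e))$, so $\bar\iota_j$ realizes Alpert's $[\iota_j]$ on homology.

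For commutation with permutations, fix $\sigma \in S_n$ and a critical cell $e$. Since $\sigma$ permutes the labels within each cell of $z(e)$ without altering the concatenation structure, and $\bar\iota_j$ inserts $(n+1)$ at a position determined by that structure rather than by the labels (with $\tilde\sigma$ fixing $n+1$), we obtain the chain-level identity $\bar\iota_j(\sigma \cdot z(e)) = \tilde\sigma \cdot z(\iota_j(e))$. Passing to homology and extending linearly gives $[\iota_j] \circ [\sigma] = [\tilde\sigma] \circ [\iota_j]$. For unorderedness, observe that $\iota_{n,l}(e)$ is again critical ($n+1$ exceeds every existing label, so the decreasing-singleton and follower conditions persist) and has the same barriers as $e$. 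Thus $\iota_{n+1, j}\iota_{n, l}(e)$ places $(n+2)$ immediately after the $j^{\text{th}}$ barrier of $e$ and $(n+1)$ immediately after the $l^{\text{th}}$, while $\iota_{n+1, l}\iota_{n, j}(e)$ swaps these two roles. When $j \neq l$ the $(n+1,n+2)$ swap converts the former into the latter at the chain level, giving the identity immediately. When $j = l$, both compositions insert at the same position and the two outputs differ only by the transposition of two adjacent singletons $(n+1), (n+2)$; this difference is a boundary, witnessed by the cycle that fuses those two labels into a single $2$-block, generalizing the prototype calculation $\partial z(1\,2|3\,4) = z(1\,2|4|3) - z(1\,2|3|4)$.

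The main obstacle is careful sign-tracking: one must verify both that $\bar\iota_j$ commutes with $\partial$ on the relevant subcomplex (so that it induces $[\iota_j]$ on homology rather than some signed variant) and that the boundary identity in the $j = l$ case of unorderedness telescopes correctly across every summand of $z(e)$. Both reduce to consistent bookkeeping of the $(-1)^{b(g_1)}$ factors in the Leibniz rule. Once the signs are settled, the conceptual content is transparent: high-insertion corresponds to inserting a new disk at a position determined by the barrier structure of the configuration, which commutes with relabeling the other disks and depends only on which slot is filled, not on which label fills it.
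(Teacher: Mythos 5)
This lemma is quoted in the paper as a citation of Alpert's Lemma~6.4 and is given no proof here, so there is no in-paper argument to compare against; I am evaluating your proposal on its own terms. Your treatment of unorderedness is essentially right: $\iota_{n,l}(e)$ is again critical with the same barriers as $e$, the $j\neq l$ case becomes an equality of critical cells after relabeling, and the $j=l$ case reduces to swapping two adjacent singletons, which is exactly Lemma~\ref{switchsingles} (witnessed by the boundary of the chain $z_{1}|(p\,q)|z_{2}$ with the bare $1$-cell $p\,q$ — note your prototype should use the single cell $3\,4$, not the cycle $z(3\,4)$, since $z(1\,2)|z(3\,4)$ is a cycle and has boundary zero).

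The genuine gap is in the commutation-with-permutations half. The map $[\iota_{j}]$ is \emph{defined} on homology through the basis of critical cycles: $[\iota_{j}]\sum a_{e}z(e)=\sum a_{e}z(\iota_{j}(e))$. So to evaluate $[\iota_{j}]\circ[\sigma]$ on $[z(e)]$ one must first expand $[\sigma\cdot z(e)]=\sum a_{e'}[z(e')]$ in that basis and then insert into each $z(e')$; your chain-level identity $\bar\iota_{j}(\sigma\cdot z(e))=\tilde\sigma\cdot z(\iota_{j}(e))$ instead inserts into the particular representative $\sigma\cdot z(e)$. These agree only if chain-level insertion at the $j^{\text{th}}$ barrier sends homologous product cycles to homologous cycles, and that is the step you do not supply. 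Your proposed justification — that $\bar\iota_{j}$ ``commutes with $\partial$ on the relevant subcomplex'' — does not deliver it: on the span of cycles where $\bar\iota_{j}$ is defined the statement is vacuous, and the cell-level operation ``insert a singleton after the $j^{\text{th}}$ two-element block'' is not a chain map on any natural subcomplex, since $\partial$ splits $2$-blocks into pairs of singletons and shuffles, changing which block is the $j^{\text{th}}$ (e.g.\ inserting after the first $2$-block does not commute with $\partial$ on the cell $1\,2|3\,4$: one order produces the cell $1|2|3\,4|5$, the other $1|2|5|3\,4$). The missing ingredient is an argument — via the product structure on homology induced by the Leibniz rule together with Lemma~\ref{switchsingles} — that insertion at a fixed barrier is independent, up to homology, of the choice of product-decomposed representative; for the boundary insertions $\iota_{0}$ and $\iota_{k}$ this is immediate because $c\mapsto(n+1)|c$ and $c\mapsto c|(n+1)$ are honest chain maps, but for the interior insertions it requires real work.
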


Additionally, Alpert showed that singletons not immediately before a two element follower commute.

\begin{lem}\label{switchsingles}
(Alpert \cite[Lemma 6.3]{alpert2020generalized}) Let $z_{1}$ and $z_{2}$ be injected cycles whose symbols are on disjoint sets, and let $p$ and $q$ be labels not appearing in $z_{1}$ and $z_{2}$. Then the concatenation products of cycles $z_{1}|p|q|z_{2}$ and $z_{1}|q|p|z_{2}$ are homologous.
\end{lem}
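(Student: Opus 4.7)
The strategy is to exhibit the difference $z_{1}|p|q|z_{2} - z_{1}|q|p|z_{2}$ as a boundary. The natural chain to consider is $z_{1}|p\,q|z_{2}$, in which $p$ and $q$ are combined into a single block of size two. Since $z_{1}$ and $z_{2}$ are injected cycles in $\text{cell}(\bullet, 2)$ (so their symbols already have blocks of size at most $2$) and $p\,q$ is itself a size-two block on labels disjoint from $z_{1}$ and $z_{2}$, this is a legitimate chain in $\text{cell}(n, 2)$, where $n$ is the total number of labels. It has exactly one fewer block than $z_{1}|p|q|z_{2}$, i.e.\ sits one dimension higher, which is the correct homological degree for a primitive.

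To compute $\partial(z_{1}|p\,q|z_{2})$, I would apply the Leibniz rule $\partial(g_{1}|g_{2}) = \partial g_{1}|g_{2} + (-1)^{b(g_{1})}g_{1}|\partial g_{2}$ twice. Writing $b_{1}$ for the common number of blocks of the cells appearing in $z_{1}$ (well-defined since a cycle is concentrated in a single homological degree, so all of its cells share the same number of blocks), the contributions involving $\partial z_{1}$ and $\partial z_{2}$ vanish because $z_{1}$ and $z_{2}$ are cycles. The only surviving term is $(-1)^{b_{1}}z_{1}|\partial(p\,q)|z_{2}$. The boundary rule for a single block yields $\partial(p\,q) = p|q - q|p$, the minus sign reflecting that the permutation sending $p\,q$ to $q\,p$ is a transposition. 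Assembling these pieces gives
\[
\partial(z_{1}|p\,q|z_{2}) \;=\; (-1)^{b_{1}}\bigl(z_{1}|p|q|z_{2} \;-\; z_{1}|q|p|z_{2}\bigr),
\]
which exhibits the difference as a boundary in $\text{cell}(n,2)$, proving that the two concatenation products are homologous.

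The main obstacle, such as it is, is just bookkeeping: one must verify that $b_{1}$ is a well-defined integer (immediate, as noted above) and that the sign conventions in the Leibniz rule and in $\partial(p\,q)$ combine as claimed. No deeper input is needed; the lemma is a formal consequence of the inductive definition of the boundary map together with the fact, recalled in the excerpt, that $\partial^{2}=0$ on these chain complexes.
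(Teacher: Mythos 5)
Your proof is correct: the paper itself gives no proof of this lemma (it is quoted from Alpert's Lemma 6.3), and your argument --- exhibiting the difference as $(-1)^{b_{1}}\partial\big(z_{1}|p\,q|z_{2}\big)$ via the Leibniz rule, the vanishing of $\partial z_{1}$ and $\partial z_{2}$, and $\partial(p\,q)=p|q-q|p$ --- is exactly the standard primitive used in the cited source. The sign bookkeeping checks out (note $b(z_{1})$ is well defined because a homogeneous cycle's cells all have $n_{1}-k_{1}$ blocks), so nothing further is needed.
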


With those two lemmas and a few other results Alpert was able to prove that $H_{k}\big(\text{cell}(\bullet,2)\big)$ is a finitely generated FI$_{k+1}$-module.

\begin{thm}\label{AlpertsBIGTheorem}
(Alpert \cite[Theorem 6.1]{alpert2020generalized}) For any $k$, the homology groups $H_{k}\big(\text{cell}(\bullet,2)\big)\cong H_{k}\big(\text{Conf}(\bullet,2)\big)$ form a finitely generated $FI_{k+1}$-module over $\Z$.
\end{thm}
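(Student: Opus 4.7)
The plan is to establish the FI$_{k+1}$-module structure from the high-insertion data already in hand, then prove finite generation by reducing an arbitrary critical cell to a ``barrier-only'' cell of bounded degree.

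First I would observe that the homotopy equivalence $H_k(\text{cell}(\bullet,2))\cong H_k(\text{Conf}(\bullet,2))$ is Theorem \ref{confiscell}, so it suffices to work with $H_k(\text{cell}(\bullet,2))$. The FB-module structure is immediate: $S_n$ acts on cells by relabeling, and this descends to homology. To upgrade this to an FI$_{k+1}$-module structure I would invoke the $k+1$ high-insertion maps $[\iota_0],\dots,[\iota_k]$ constructed earlier in the section. Alpert's Lemma 6.4 (stated in the excerpt) asserts that these satisfy the hypotheses of Alpert's Lemma 5.1, namely that they commute with permutations and that insertions are unordered. Invoking Lemma 5.1 directly equips $H_k(\text{cell}(\bullet,2))$ with a canonical FI$_{k+1}$-module structure.

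For finite generation, the strategy is to show that every critical cell in $H_k(\text{cell}(n,2))$ lies in the FI$_{k+1}$-submodule generated by critical cells on at most $3k$ labels. By Theorem \ref{basistheorem}, $H_k(\text{cell}(n,2))$ is spanned by classes $z(e)$ where $e$ is a unique concatenation product of order-preserving images of the cells $1$, $1\,2$, and $1|3\,2$ and contains exactly $k$ barriers. Each barrier uses either $2$ or $3$ labels, so the barriers of $e$ together use at most $3k$ labels, and the remaining labels appear as ``loose'' singletons lying in one of the $k+1$ gaps determined by the barriers. Let $e_0$ denote the barrier-only symbol obtained by erasing the loose singletons, relabeled in an order-preserving way by $[m]$ with $m\le 3k$. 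I would then exhibit an FI$_{k+1}$-morphism $(f,g):[m]\to[n]$ whose injection $f$ reproduces the labels of the barriers and whose $(k+1)$-coloring $g$ records the gap into which each loose singleton is inserted, so that the induced action on $z(e_0)$ produces $z(e)$ up to sign. The verification that this morphism really does send $z(e_0)$ to $\pm z(e)$ proceeds by applying the high-insertion maps one loose singleton at a time, using Lemma \ref{switchsingles} to permute singletons within each gap into the strictly decreasing order required of a critical cell.

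The main obstacle is this last reordering step: although each $[\iota_j]$ inserts the largest label $n+1$ into the correct gap (which keeps the inserted singleton first, consistent with decreasing order), building up a critical cell with specified labels in each gap requires interleaving high-insertion maps with the $S_n$-action and then reordering the resulting singletons. Lemma \ref{switchsingles} is exactly what is needed, but some bookkeeping is required to check that the two FB-module maps (the composite of high-insertions followed by a permutation, versus the FI$_{k+1}$-action of $(f,g)$) agree on the class of $z(e_0)$. Once this is verified, $H_k(\text{cell}(\bullet,2))$ is generated as an FI$_{k+1}$-module by the finitely many classes $z(e_0)$ with $e_0$ a barrier-only critical cell on at most $3k$ labels; since each $S_m$-representation for $m\le 3k$ is finite-dimensional over $\Z$ (indeed free abelian and finitely generated), finite generation follows.
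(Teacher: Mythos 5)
Your proposal is correct and follows essentially the same route as the argument the paper attributes to Alpert (the paper cites \cite[Theorem 6.1]{alpert2020generalized} rather than reproving it, but assembles exactly these ingredients): the FI$_{k+1}$-structure comes from the high-insertion maps via Lemmas 5.1 and 6.4, and finite generation follows from the critical-cell basis of Theorem \ref{basistheorem} together with Lemma \ref{switchsingles} to reorder inserted singletons, with generators concentrated in degrees at most $3k$. The bookkeeping step you flag does go through, since the relabeling permutation can be chosen order-preserving on each barrier's label set, so it carries injected cycles to injected cycles without introducing signs.
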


It follows that $H_{k}(\text{cell}(\bullet,2))$ has quite a bit of structure as a sequence of symmetric group representations. Still, Theorem \ref{AlpertsBIGTheorem} and the results around it do not provide a way to decompose $H_{k}\bullet(\text{cell}(n,2)\big)$ into irreducible symmetric group representations, as they do not describe the relations that arise. In the next section we take up this task, by considering rational homology. Since, $H_{k}\big(\text{cell}(n,2)\big)$ is torsion free, it follows that \ref{AlpertsBIGTheorem} proves

\begin{cor}\label{AlpertsBIGTheoremcor}
For any $k$, the homology groups $H_{k}\big(\text{cell}(\bullet,2);\Q\big)\cong H_{k}\big(\text{Conf}(\bullet,2);\Q\big)$ form a finitely generated $FI_{k+1}$-module over $\Q$.
\end{cor}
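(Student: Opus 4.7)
The plan is to deduce the rational statement directly from the integral version (Theorem \ref{AlpertsBIGTheorem}) via base change, using the torsion-freeness of the integral homology that was recorded in the introduction (Alpert's Theorem 4.3, cited at the start of Section 2).

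First, I would invoke Theorem \ref{AlpertsBIGTheorem} to obtain that $H_{k}\big(\text{cell}(\bullet,2);\Z\big)$ is a finitely generated FI$_{k+1}$-module over $\Z$, with the FI$_{k+1}$-structure given by the high-insertion maps $[\iota_{j}]$ together with the symmetric group actions. Next, I would apply the exact functor $-\otimes_{\Z}\Q$ levelwise. Since tensoring with $\Q$ is functorial and preserves the relations defining an FI$_{d}$-module (because the high-insertion maps and $S_{n}$-actions are $\Z$-linear and the defining identities of Alpert's Lemma 6.4 are preserved under base change), the resulting sequence
\[
n\mapsto H_{k}\big(\text{cell}(n,2);\Z\big)\otimes_{\Z}\Q
\]
is naturally an FI$_{k+1}$-module over $\Q$. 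Finite generation is preserved by base change: a finite generating set over $\Z$ maps to a generating set over $\Q$, and each $H_{j}\big(\text{cell}(j,2);\Z\big)\otimes_{\Z}\Q$ remains finite-dimensional as an $S_{j}$-representation.

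It remains to identify $H_{k}\big(\text{cell}(n,2);\Z\big)\otimes_{\Z}\Q$ with $H_{k}\big(\text{cell}(n,2);\Q\big)$. This is precisely the content of the universal coefficient theorem, simplified by the fact that $H_{k}\big(\text{cell}(n,2);\Z\big)$ is free abelian (Alpert's Theorem 4.3 gives an explicit basis, so in particular there is no torsion and no $\text{Tor}$ term appears). Finally, the homotopy equivalence $\text{Conf}(n,2)\simeq \text{cell}(n,2)$ of Theorem \ref{confiscell} induces an $S_{n}$-equivariant isomorphism on rational homology that is moreover compatible with the high-insertion maps, so $H_{k}\big(\text{Conf}(\bullet,2);\Q\big)\cong H_{k}\big(\text{cell}(\bullet,2);\Q\big)$ as FI$_{k+1}$-modules.

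There is no real obstacle here — the corollary is an immediate consequence of Theorem \ref{AlpertsBIGTheorem} combined with the torsion-freeness of $H_{k}\big(\text{cell}(n,2);\Z\big)$. The only minor point worth being explicit about is that tensoring an FI$_{d}$-module over $\Z$ with $\Q$ yields an FI$_{d}$-module over $\Q$ and preserves finite generation, which follows formally from the exactness of $-\otimes_{\Z}\Q$ and the definition of finite generation recalled earlier in this section.
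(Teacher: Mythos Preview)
Your proposal is correct and follows exactly the paper's reasoning: the paper simply remarks that since $H_{k}\big(\text{cell}(n,2)\big)$ is torsion free, Theorem~\ref{AlpertsBIGTheorem} immediately yields the rational statement, without spelling out the base-change details you included. Your version is a more explicit rendering of the same one-line deduction.
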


\section{A basis for $H_{1}\big(\text{cell}(n,2);\Q\big)$} \label{newH1basis}

An insurmountable obstacle in decomposing $H_{k}\big(\text{cell}(n,2)\big)$ into a direct sum of irreducible $S_{n}$-representations arises in $H_{1}(\text{cell}(3,2))$ and remains for all nontrivial $H_{k}\big(\text{cell}(n,2)\big)$ for $k\ge 1$ and $n\ge 3$: these groups are not indecomposable as $S_{n}$-representations \cite[Theorem 8.3]{alpert2021configuration1}. Alpert and Manin give a presentation for $H_{*}(\text{Conf}(\bullet,2))$ as a twisted algebra taking this into account in \cite[Theorem 8.3]{alpert2021configuration1}, but the relations in this presentation make explicitly stating the $S_{n}$-representation structure computationally challenging for large $n$, as they are concentrated in small degree. This problem goes away if we consider homology with rational coefficients, as $H_{k}\big(\text{cell}(n,2);\Q\big)$ is a semi-simple $S_{n}$-representation. Since $H_{k}\big(\text{cell}(n,2)\big)$ is torsion free, it is of the same dimension as $H_{k}\big(\text{cell}(n,2);\Q\big)$. While the basis from Theorem \ref{basistheorem} can be thought of as a basis for the rational homology groups, it does not lend itself to decomposing $H_{k}\big(\text{cell}(n,2);\Q\big)$ into a direct sum of irreducible $S_{n}$-representations. In this section we give a new basis for $H_{1}\big(\text{cell}(n,2);\Q\big)$ that has several nice representation theoretic properties that will aid in our efforts to decompose $H_{k}\big(\text{cell}(n,2);\Q\big)$ into a direct sum of induced $S_{n}$-representations.

First, we give an example of the computational problems that arise when restrict ourselves to homology with integer coefficients and the basis of Theorem \ref{basistheorem}. Consider $H_{1}\big(\text{cell}(3,2)\big)$. The basis given by Theorem \ref{basistheorem} consists of $z(1|3\,2)$, $z(1|2\,3)$, $z(2\,3|1)$, $z(2|1\,3)$, $z(1\,3|2)$, $z(3|1\,2)$, and $z(1\,2|3)$. The symmetric group $S_{3}$ stabilizes the set consisting of the last six elements. As a result, their span is a representation of $S_{3}$; unfortunatley, $S_{3}$ does not send the span of $z(1|3\,2)$ to itself. Instead, nontrivial elements of $S_{3}$ send $z(1|3\,2)$ to nontrivial linear combinations of all seven basis elements. Thus, even if $H_{1}\big(\text{cell}(3,2)\big)$ was semi-simple, the task of decomposing $H_{1}\big(\text{cell}(3,2);\Q\big)$ into irreducible $S_{3}$-representations is harder than necessary given the basis of Theorem \ref{basistheorem}. These problems remain when we increase the homological degree $k$ and the number of labels $n$. Thus, we turn to rational coefficients as $H_{k}\big(\text{cell}(p,2);\Q\big)$ is a semi-simple $S_{n}$-representation, and seek a new basis for homology.

We begin this section by separating the basis elements for $H_{1}\big(\text{cell}(n,2)\big)$ described in Theorem \ref{basistheorem} into two classes.

\begin{defn}\label{Adef}
For $n\ge 3$ and $1\le m\le n-2$ let \emph{$A_{n,m}$} denote the set of basis elements for $H_{1}\big(\text{cell}(n,2);\Q\big)$ described in Theorem \ref{basistheorem} of the form $z(e)$ where $e$ is a critical cell of the form $i_{1}|\cdots|i_{m}|i_{m+1}\,i_{m+2}|i_{m+3}|\cdots|i_{n}$, with $i_{j}\in \{1,\cdots, n\}$, such that, for all $j\neq k$, $i_{j}\neq i_{k}$, $i_{1}>\cdots>i_{m}$, $i_{m+3}>\cdots>i_{n}$, and $i_{m+1}>i_{m+2}>i_{m}$. Set $A_{n}:=\bigcup_{1\le m\le n-2}A_{n, m}$.
\end{defn}

The set $A_{n}$ is the set of basis elements for $H_{1}\big(\text{cell}(n,2); \Q\big)$ from Theorem \ref{basistheorem} arising from left and right high-insertion, i.e., $[i_{0}]$ and $[i_{1}]$, on $z(1|3\,2)$ and a partial action of the symmetric group. Note that $A_{n}$ is not preserved by every element of $S_{n}$ action, e.g., $(123)\in S_{3}$ does not send $z(1|3\,2)$, the only element of $A_{3}$, to a multiple of itself. It follows from Definition \ref{Adef} that the label $1$ appears immediately to the left of the block of size two in the representative cell of an element of $A_{n,1}$.

\begin{exam}
The set $A_{3,1}$ has only one element namely, $z(1|3\,2)$. This is the blue, inner loop oriented clockwise in Figure \ref{1cycleincell(3,2)}. $A_{4,1}$ consists of three elements $z(2|1|4\,3)$, $z(3|1|4\,2)$, and $z(4|1|3\,2)$. 
\end{exam}

\begin{defn}
For $n\ge 2$ and $0\le m\le n-2$ let \emph{$B_{n, m}$} denote the set of basis elements for $H_{1}\big(\text{cell}(n,2);\Q\big)$ described in Theorem \ref{basistheorem} of the form $z(e)$ where $e$ is a critical cell of the form $i_{1}|\cdots|i_{m}|i_{m+1}\,i_{m+2}|i_{m+3}|\cdots|i_{n}$, with $i_{j}\in \{1,\cdots, n\}$, for all $j\neq k$, $i_{j}\neq i_{k}$, $i_{1}>\cdots>i_{m}$, $i_{m+3}>\cdots>i_{n}$, and $i_{m+2}>i_{m+1}$. Set $B_{n}:=\bigcup_{0\le m\le n-2}B_{n,m}$.
\end{defn}

These are the basis elements for $H_{1}\big(\text{cell}(n,2); \Q\big)$ from Theorem \ref{basistheorem} arising from left and right high-insertion, i.e., $[i_{0}]$ and $[i_{1}]$, on $z(1\,2)$ and the action of the symmetric group.

\begin{exam}
The classes in $B_{3}$ are the small loops in Figure \ref{cell(3,2)pic}, i.e., $z(1|2\,3)$, $z(2\,3|1)$, $z(2|1\,3)$, $z(1\,3|2)$, $z(3|1\,2)$, and $z(1\,2|3)$.
\end{exam}

Note that $A_{n}\cup B_{n}$ is the basis for $H_{1}\big(\text{cell}(n,2); \Q\big)$ from Theorem \ref{basistheorem}.

The symmetric group action does not send the elements of $B_{n}$ to formal sums of elements of $B_{n}$. Fortunately, the symmetric group sends the elements of $B_{n}$ to elements homologous to the elements of $B_{n}$, i.e., $S_{n}$ stabilizes this set of homology classes represented by the span of the elements of $B_{n}$.

\begin{prop}\label{BnmisSnhomologygood}
Let $n\ge 2$ and $m$ be such that $0\le m\le n-2$. Then, $S_{n}$ stabilizes the set of homology classes generated by single elements of $B_{n,m}$. Moreover, the $S_{n}$-action is transitive on this set of homology classes.
\end{prop}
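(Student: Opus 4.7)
The plan is to establish a bijection between $B_{n,m}$ and the set of ordered set-partitions $([n] = L \sqcup M \sqcup R)$ with $|L|=m$, $|M|=2$, and $|R|=n-m-2$, to show that $[z(e)]$ depends only on this partition, and then to invoke the transitivity of the $S_n$-action on such partitions. Given $z(e) \in B_{n,m}$ with $e = i_1|\cdots|i_m|i_{m+1}\,i_{m+2}|i_{m+3}|\cdots|i_n$, I will send it to the partition $(L, M, R)$ where $L = \{i_1, \ldots, i_m\}$, $M = \{i_{m+1}, i_{m+2}\}$, and $R = \{i_{m+3}, \ldots, i_n\}$. The three ordering conditions defining $B_{n,m}$ (decreasing left singletons, decreasing right singletons, increasing $2$-block) pin down $e$ uniquely given $(L, M, R)$, yielding a bijection; a quick count confirms $|B_{n,m}| = \binom{n}{m,\,2,\,n-m-2}$.

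Next I will argue that $[z(e)]$ depends only on the partition $(L,M,R)$. Writing $z(e) = z(i_1)|\cdots|z(i_m)|z(i_{m+1}\,i_{m+2})|z(i_{m+3})|\cdots|z(i_n)$ as a concatenation product of injected cycles, the $2$-block cycle $z(i\,j) = i\,j + j\,i$ is symmetric at the chain level, so the internal order of $M$ is irrelevant. For the left singletons, repeated application of Lemma~\ref{switchsingles} to adjacent singletons (with the remaining concatenation products on either side playing the roles of $z_1$ and $z_2$) will show that any reordering of the left singletons yields a homologous chain; the same reasoning handles the right singletons.

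For stabilization, I will observe that the chain-level action satisfies $\sigma \cdot z(e) = z(\sigma \cdot e)$ without signs, and that $\sigma \cdot e$ is again a concatenation product of $m$ injected singletons, one injected $2$-block, and $n-m-2$ injected singletons, now corresponding to the partition $(\sigma(L), \sigma(M), \sigma(R))$. By the previous paragraph, $[\sigma \cdot z(e)]$ equals the class of the unique element of $B_{n,m}$ associated to $(\sigma(L), \sigma(M), \sigma(R))$, proving that $S_n$ stabilizes the set of classes. Transitivity will then be immediate: given two such partitions with matching block sizes, any $\sigma \in S_n$ carrying one to the other witnesses the desired equality of homology classes.

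The hard part will be applying Lemma~\ref{switchsingles} at the boundary between the left singletons and the $2$-block (and symmetrically on the right), where one must verify that the factor playing the role of $z_2$, which may begin with an injected $2$-block cycle, genuinely satisfies the hypotheses of the lemma. Since Lemma~\ref{switchsingles} allows any injected cycles on disjoint label sets to play the roles of $z_1$ and $z_2$, and the $2$-block cycle $z(i_{m+1}\,i_{m+2})$ is indeed an injected cycle, this check will be routine, so the real content lies in organizing the bookkeeping of the bubble-sort argument on singletons and confirming the chain-level symmetries of the $2$-block.
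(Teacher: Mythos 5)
Your proposal is correct and follows essentially the same route as the paper's proof: decompose $z(e)$ as a concatenation product, use the chain-level symmetry of $z(i\,j)=i\,j+j\,i$ together with Lemma~\ref{switchsingles} to reorder singletons, and exhibit an explicit permutation for transitivity. The bijection with ordered set-partitions $(L,M,R)$ is just a cleaner bookkeeping of the same argument.
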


\begin{proof}
We can write elements of $B_{n,m}$ as concatenation products:
\[
z(i_{1}|\cdots|i_{m}|i_{m+1}\,i_{m+2}|i_{m+3}|\cdots|i_{n})=z(i_{1})|\cdots|z(i_{m})|z(i_{m+1}\,i_{m+2})|z(i_{m+3})|\cdots|z(i_{n}).
\]
It follows that 
\begin{align*}
z(i_{1}|\cdots|i_{m}|i_{m+1}\,i_{m+2}|i_{m+3}|\cdots|i_{n})
=i_{1}|\cdots|i_{m}|z(i_{m+1}\,i_{m+2})|i_{m+3}|\cdots|i_{n}\\
=i_{1}|\cdots|i_{m}|i_{m+1}\,i_{m+2}|i_{m+3}|\cdots|i_{n}\,+\,i_{1}|\cdots|i_{m}|i_{m+2}\,i_{m+1}|i_{m+3}|\cdots|i_{n}.
\end{align*}
Given $\sigma$, an arbitrary element of $S_{n}$,
\begin{multline*}
\sigma\big(z(i_{1}|\cdots|i_{m}|i_{m+1}\,i_{m+2}|i_{m+3}|\cdots|i_{n})\big)
=\sigma(i_{1})|\cdots|\sigma(i_{m})|\sigma(i_{m+1})\,\sigma(i_{m+2})|\sigma(i_{m+3})|\cdots|\sigma(i_{n})\,\\+\,\sigma(i_{1})|\cdots|\sigma(i_{m})|\sigma(i_{m+2})\,\sigma(i_{m+1})|\sigma(i_{m+3})|\cdots|\sigma(i_{n}).
\end{multline*}
This simplifies to
\[
\sigma(i_{1})|\cdots|\sigma(i_{m})|\big(\sigma(i_{m+1})\,\sigma(i_{m+2})\,+\,\sigma(i_{m+2})\,\sigma(i_{m+1})\big)|\sigma(i_{m+3})|\cdots|\sigma(i_{n}).
\]
It might not be the case that $\sigma(i_{1})>\cdots>\sigma(i_{m})$ or $\sigma(i_{m+3})>\cdots>\sigma(i_{n})$. Lemma \ref{switchsingles} proves that this cycle is homologous to the cycle where $\sigma(i_{1})>\cdots>\sigma(i_{m})$, and $\sigma(i_{m+3})>\cdots>\sigma(i_{n})$. Additionally,
\[
\sigma(i_{m+1})\,\sigma(i_{m+2})+\sigma(i_{m+2})\,\sigma(i_{m+1})=\sigma(i_{m+2})\,\sigma(i_{m+1})+\sigma(i_{m+1})\,\sigma(i_{m+2})
\]
Therefore,
\[
\sigma\big(z(i_{1}|\cdots|i_{m}|i_{m+1}\,i_{m+2}|i_{m+3}|\cdots|i_{n})\big)
\]
is homologous to 
\[
z(j_{1}|\cdots|j_{m}|j_{m+1}\,j_{m+2}|j_{m+3}|\cdots|j_{n}),
\]
such $j_{k}\in\{1,\dots, n\}$ for all $k$, for $1\le k\le m$ there is some $1\le r\le m$ such that $j_{k}=\sigma(i_{r})$, for $m+3\le k\le n$ there is some $m+3\le r\le n$ such that $j_{k}=\sigma(i_{r})$, $j_{j}\neq j_{l}$ if $k\neq l$, $j_{1}>\cdots>j_{m}$, $j_{m+1}<j_{m+2}$, $j_{m+1}$ is the lesser of $\sigma(i_{m+1})$ and $\sigma(i_{m+2})$ and $j_{m+2}$ is the greater of that pair, and $j_{m+3}>\cdots>j_{n}$. Therefore, $S_{n}$ stabilizes the set of homology classes generated by single elements of $B_{n,m}$.

Next we prove that the $S_{n}$-action is transitive on the set of homology classes  generated single elements of $B_{n,m}$. Let $z(i_{1}|\cdots|i_{m}|i_{m+1}\,i_{m+2}|i_{m+3}|\cdots|i_{n})$ and $z(j_{1}|\cdots|j_{m}|j_{m+1}\,j_{m+2}|j_{m+3}|\cdots|j_{n})$ be two elements of $B_{n,m}$. The permutation $\sigma\in S_{n}$ that maps $i_{k}$ to $j_{k}$ for all $k\in[n]$, maps the first element of $B_{n,m}$ to the second element. Since our choice of elements was arbitrary, any element of $B_{n,m}$ can be mapped to any other element of $B_{n,m}$, so the $S_{n}$-action on the set of homology classes  generated single elements of $B_{n,m}$ is transitive.
\end{proof}

\begin{prop}\label{BnisSnhomologygood}
For $n\ge 2$, $S_{n}$ stabilizes the set of homology classes generated by single elements of $B_{n}$.
\end{prop}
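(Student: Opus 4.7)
The plan is to obtain this as an immediate corollary of Proposition \ref{BnmisSnhomologygood}. By definition, $B_{n}=\bigcup_{0\le m\le n-2}B_{n,m}$, and consequently the set of homology classes generated by single elements of $B_{n}$ is the union, over $0\le m\le n-2$, of the sets of homology classes generated by single elements of $B_{n,m}$. A union of $S_{n}$-stable sets is $S_{n}$-stable, and each piece is $S_{n}$-stable by Proposition \ref{BnmisSnhomologygood}, so the union is $S_{n}$-stable.

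The only point that merits a brief check is that, for $z(e)\in B_{n,m}$ and $\sigma\in S_{n}$, the element $\sigma\bigl(z(e)\bigr)$ is homologous to an element of $B_{n,m}$ for the \emph{same} value of $m$, and not some $B_{n,m'}$ with $m'\neq m$. Inspecting the proof of Proposition \ref{BnmisSnhomologygood}, one sees that the only tool used to rearrange the image $\sigma\bigl(z(e)\bigr)$ into the required decreasing-singleton form is Lemma \ref{switchsingles}, which swaps two adjacent singletons and never moves the unique size-$2$ block of $e$. Hence the position of the size-$2$ block, and therefore the index $m$ counting the singletons to its left, is preserved under the identification of $\sigma\bigl(z(e)\bigr)$ with an element of $B_{n,m}$. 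With this observation in hand, there is no real obstacle, and the proposition follows directly.
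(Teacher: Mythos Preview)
Your proof is correct and follows exactly the same approach as the paper's own proof, which simply notes that the result follows immediately from Proposition~\ref{BnmisSnhomologygood} together with the decomposition $B_{n}=\bigcup_{0\le m\le n-2}B_{n,m}$. Your additional paragraph verifying that $\sigma$ preserves the index $m$ is more explicit than the paper's one-line justification but adds nothing new, since this is already part of the content of Proposition~\ref{BnmisSnhomologygood}.
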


\begin{proof}
This follows immediately from Proposition \ref{BnmisSnhomologygood} and the fact that
\[
B_{n}=\bigcup_{0\le m\le n-2}B_{n,m}.
\]
\end{proof}

We use the previous two propositions to show that the FI$_{2}$-module generated by $z(1\,2)$ and the high-insertion maps $[\iota_{0}]$ and $[\iota_{1}]$ is a free FI$_{2}$-submodule of $H_{1}\big(\text{cell}(\bullet,2);\Q\big)$.

\begin{prop}\label{BnisabasisforthefreeFI2mod}
For $n\ge 2$ the elements of $B_{n}$ form a basis for the $n^{\text{th}}$-degree term of the FI$_{2}$-submodule of $H_{1}\big(\text{cell}(\bullet,2);\Q\big)$ generated by the span of $z(1\,2)$ and the high-insertion maps $[\iota_{0}]$ and $[\iota_{1}]$. Moreover, this FI$_{2}$-submodule is free, and it is isomorphic to $M^{FI_{2}}(Triv_{2})$, where $Triv_{2}$ is the trivial representation of $S_{2}$.
\end{prop}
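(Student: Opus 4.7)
Let $U \subseteq H_1\big(\text{cell}(\bullet,2); \Q\big)$ denote the FI$_2$-submodule generated by $z(1\,2)$. Since $z(2\,1)=z(1\,2)$, the span of $z(1\,2)$ in $H_1\big(\text{cell}(2,2);\Q\big)$ is a copy of the trivial representation $Triv_2$ of $S_2$. The plan has three stages: first, identify $U_n$ set-theoretically as the span of $B_n$; second, count to confirm that $B_n$ really is a basis of $U_n$; third, pass to $M^{FI_2}(Triv_2)$ via a dimension comparison.

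For the first stage, I would prove $U_n=\text{span}(B_n)$ by two containments. For $\text{span}(B_n) \subseteq U_n$, start from $z(1\,2)\in U_2$ and repeatedly apply the high-insertion maps $[\iota_0]$ and $[\iota_1]$. A routine induction, using that the newly inserted label is always the largest and that high-insertion preserves criticality, shows that any such composition produces a critical cycle $z(a_1|\cdots|a_m|1\,2|b_1|\cdots|b_{n-m-2})$ where $\{a_1,\ldots,a_m\} \sqcup \{b_1,\ldots,b_{n-m-2}\}=\{3,\ldots,n\}$ and each of these two index sequences is decreasing. These are precisely the elements of $B_{n,m}$ whose two-element block is $\{1,2\}$. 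Applying $\sigma \in S_n$ and invoking Proposition \ref{BnmisSnhomologygood} (the $S_n$-action is transitive on the homology classes represented by $B_{n,m}$), we realize every element of $B_{n,m}$ as a class in $U_n$. For the reverse containment, $U_n$ is spanned by the $S_n$-translates of all compositions of $[\iota_0]$ and $[\iota_1]$ applied to $z(1\,2)$; by the construction just described these sit inside $\text{span}(B_n)$, and by Proposition \ref{BnisSnhomologygood} the set of homology classes represented by $B_n$ is preserved under the $S_n$-action.

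For the second stage, a direct enumeration gives $|B_{n,m}|=\binom{n}{2}\binom{n-2}{m}$: one chooses the two labels of the two-element block ($\binom{n}{2}$ options, forced into increasing order), and distributes the remaining $n-2$ singletons between the left and right portions ($\binom{n-2}{m}$ options, each side forced to be decreasing). Summing over $m$ yields $|B_n|=\binom{n}{2}\cdot 2^{n-2}$. Linear independence of $B_n$ is inherited from Theorem \ref{basistheorem}, since $B_n$ is a subset of Alpert's basis (which remains a basis after tensoring with $\Q$ by torsion-freeness). Therefore $B_n$ is a basis of $U_n$, and $\dim U_n=\binom{n}{2}\cdot 2^{n-2}$.

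Finally, the universal property of $M^{FI_2}(Triv_2)$ combined with the $S_2$-invariance of $z(1\,2)$ yields an FI$_2$-module map $\Phi: M^{FI_2}(Triv_2)\to U$ sending the canonical generator to $z(1\,2)$; this is surjective since $z(1\,2)$ generates $U$ by construction. Proposition \ref{buidanFIdfromanFB} gives $\dim M^{FI_2}(Triv_2)_n=\binom{n}{2}\cdot 2^{n-2}$, which matches $\dim U_n$, so $\Phi$ is a degree-wise isomorphism and hence an isomorphism of FI$_2$-modules. I expect the main obstacle to be the first stage, specifically the verification that each composition of high-insertion maps applied to $z(1\,2)$ really is a critical cycle of the form displayed above with block exactly $\{1,2\}$; this reduces to a careful but direct check using the definition of $[\iota_j]$ and the criticality conditions from Theorem \ref{basistheorem}.
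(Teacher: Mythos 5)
Your proposal is correct and follows essentially the same route as the paper: realize the span of $B_{n}$ inside the submodule via compositions of $[\iota_{0}]$, $[\iota_{1}]$ and the transitive $S_{n}$-action from Proposition \ref{BnmisSnhomologygood}, count $|B_{n}|=\binom{n}{2}2^{n-2}$ with linear independence inherited from Theorem \ref{basistheorem}, and conclude by comparing with $\dim M^{FI_{2}}(Triv_{2})_{n}$ from Proposition \ref{buidanFIdfromanFB}. The only cosmetic difference is that you prove the containment $U_{n}\subseteq\operatorname{span}(B_{n})$ directly, whereas the paper obtains it implicitly from the dimension squeeze between $\operatorname{span}(B_{n})$ and the free module.
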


\begin{proof}
First, we show that every element of $B_{n}$ is in the FI$_{2}$-submodule of $H_{1}\big(\text{cell}(\bullet,2);\Q\big)$ generated by  $z(1\,2)$ and the high-insertion maps $[\iota_{0}]$ and $[\iota_{1}]$. By definition of an FI$_{2}$-module, every homology class of the form 
\[
\sigma\big(z(i_{1})|\cdots|z(i_{m})|z(1\,2)|z(i_{m+1})|\cdots|z(i_{n-2})\big),
\]
where $0\le m\le n-2$, $i_{j}\in \{3,\dots, n\}$, $i_{j}\neq i_{k}$ for $j\neq k$, $i_{1}>\cdots>i_{m}$, $i_{m+1}>\cdots>i_{n-2}$, and $\sigma\in S_{n}$ is in the $n^{\text{th}}$-degree of the FI$_{2}$-submodule of $H_{1}\big(\text{cell}(\bullet,2);\Q\big)$ generated by $z(1\,2)$ and $[\iota_{0}]$ and $[\iota_{1}]$. Note that 
 \[
 \sigma\big(z(i_{1})|\cdots|z(i_{m})|z(1\,2)|z(i_{m+1})|\cdots|z(i_{n-2})\big)=\sigma\big(z(i_{1}|\cdots|i_{m}|1\,2|i_{m+1}|\cdots|i_{n-2})\big),
 \]
and that $z(i_{1}|\cdots|i_{m}|1\,2|i_{m+1}|\cdots|i_{n-2})$ is an element of $B_{n,m}$. By Proposition \ref{BnmisSnhomologygood} the set of homology classes generated by the elements of $B_{n,m}$ is stable under the $S_{n}$-action and this action is transitive on these classes, so the set of homology classes generated by the elements of $B_{n,m}$ is in this FI$_{2}$-submodule, and $B_{n,m}$ gives a representative basis for this space. Since $m$ can range from $0$ to $n-2$, it follows that all the homology classes generated by single elements of $B_{n}$ are in this FI$_{2}$-submodule. It follows that the span of the elements of $B_{n}$ is in this FI$_{2}$-submodule.

Note that $S_{2}$ acts as the trivial representation on the $\Q$-span of $z(1\,2)$ in $H_{1}\big(\text{cell}(2,2);\Q\big)$. Thus, the FI$_{2}$-submodule of $H_{1}\big(\text{cell}(\bullet,2);\Q\big)$ generated by $z(1\,2)$ and $[\iota_{0}]$ and $[\iota_{1}]$ is isomorphic to FI$_{2}$-submodule of $M^{\text{FI}_{2}}(Triv_{2})$.

Next we show that this FI$_{2}$-submodule is free by showing that 
\[
|B_{n}|=\dim(M^{FI_{2}}(Triv_{2})_{n}).
\]

There are $\binom{n}{2}$ ways of choosing the two labels of the block of size $2$ from $[n]$ in a element of $B_{n}$. There are $2^{n-2}$ ways of placing the remaining elements of $n-2$ elements either to the left or right of this block such that the elements of $[n]$ on the same side of the block of size $2$ are in decreasing order. Thus 
\[
|B_{n}|=\binom{n}{2}2^{n-2}.
\]

By Theorem \ref{basistheorem} the elements of $B_{n}$ are linearly independent in $H_{1}\big(\text{cell}(n,2);\Q\big)$. Thus the $\Q$-span of the elements in $B_{n}$ in $H_{1}\big(\text{cell}(n,2);\Q\big)$ is $\binom{n}{2}2^{n-2}$-dimensional. Since the FI$_{2}$-submodule of $H_{1}\big(\text{cell}(\bullet,2);\Q\big)$ generated by $z(1\,2)$ and the maps $[\iota_{0}]$ and $[\iota_{1}]$ contains the $\Q$-span of $B_{n}$, it follows that this FI$_{2}$-module is at least $\binom{n}{2}2^{n-2}$ dimensional. 

By Proposition \ref{buidanFIdfromanFB}
\[
\dim\big(M^{\text{FI}_{2}}(Triv_{2})_{n}\big)=\dim(\text{Triv}_{2})\binom{n}{2}2^{n-2}=\binom{n}{2}2^{n-2}.
\]

Since the $n^{\text{th}}$-degree of the FI$_{2}$-module generated by $z(1\,2)$ and $[\iota_{0}]$ and $[\iota_{1}]$ is at least $\binom{n}{2}2^{n-2}$ dimensional, it follows that the FI$_{2}$-submodule is isomorphic to $M^{FI_{2}}(Triv_{2})$ and is therefore free. Moreover, since $B_{n}$ spans a $\binom{n}{2}2^{n-2}=\dim\big(M^{\text{FI}_{2}}(Triv_{2})_{n}\big)$-dimensional subspace of the $n^{\text{th}}$-degree term of this free FI$_{2}$-submodule, it follows that $B_{n}$ is a basis for the $n^{\text{th}}$-degree term of the free FI$_{2}$-submodule of $H_{1}\big(\text{cell}(\bullet,2);\Q\big)$ generated by the $\Q$-span of $z(1\,2)$ and the high-insertion maps $[\iota_{0}]$ and $[\iota_{1}]$.
\end{proof}

\begin{defn}
Let $z(e)$ be an element the basis for $H_{1}\big(\text{cell}(n,2);\Q\big)$ from Theorem \ref{basistheorem}. If $z(e)\in A_{n,m}$, we set $L(z(e)):=2m$, and if $z(e)\in B_{n,m}$, we set $L(z(e)):=2m-1$. Given a cycle $z$ in $H_{1}\big(\text{cell}(n,2);\Q\big)$ of the form $z=\sum a_{e}z(e)$ where all $a_{e}\neq 0$ we say that the \emph{leading value} of $z$ is $L(z):=\max(L(z(e)))$.
\end{defn}

As stated earlier, the $S_{3}$-action does not send $z(1|3\,2)$ to another element of the basis from Theorem \ref{basistheorem}. This makes it a challenge to use that basis to compute the symmetric group representation structure of $H_{1}\big(\text{cell}(n,2);\Q\big)$. To overcome this challenge, we describe a new family of bases for $H_{1}\big(\text{cell}(n,2);\Q\big)$ that have nicer representation theoretic properties. We do this by replacing the sets $A_{n,m}$ with sets $A'_{n,m}$ such that the $\Q$-span of the elements of $A'_{n,m}$ is preserved under the $S_{n}$-action.

\begin{thm}\label{betterbasisforH1}
For $n\ge 3$ and $1\le m\le n-2$ there are finite sets $A'_{n,m}$ consisting of cycles in $H_{1}\big(\text{cell}(n,2);\Q\big)$ such that 
\begin{enumerate}
\item The elements of $A'_{n,m}$ are linearly independent and $|A'_{n,m}|=|A_{n,m}|$,
\item The $\Q$-span of $B_{n,0}\bigcup_{1\le l\le m}\big(A'_{n,l}\cup B_{n,l}\big)$ in $H_{1}\big(\text{cell}(n,2);\Q\big)$ equals the $\Q$-span of $B_{n,0}\bigcup_{1\le l\le m}\big(A_{n,l}\cup B_{n,l}\big)$ in $H_{1}\big(\text{cell}(n,2);\Q\big)$,
\item The $\Q$-span of $A'_{n,m}$ is the $n^{\text{th}}$-degree of the FI$_{1}$-submodule of $H_{1}\big(\text{cell}(\bullet, 2);\Q\big)$ generated by the $\Q$-span of $A'_{m+2,m}$ and the high-insertion map $[\iota_{1}]$ and this FI$_{1}$-submodule is free, and
\item As a representation of $S_{m+2}$ the $\Q$-span of $A'_{m+2, m}$ is isomorphic to $\bigwedge^{2}Std_{m+2}$ where $Std_{m+2}$ is the standard $S_{m+2}$-representation, and this is a subrepresentation of the $(m+2)^{\text{th}}$ term of the FI$_{1}$-submodule of $H_{1}\big(\text{cell}(\bullet, 2);\Q\big)$ generated by the $\Q$-span of $A'_{3,1}$ and the high-insertion map $[\iota_{0}]$.
\end{enumerate}
\end{thm}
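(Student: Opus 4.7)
The plan is to build the sets $A'_{n,m}$ in three stages, leveraging the semisimplicity of rational $S_n$-representations together with Pieri's rule. First I construct a base case $A'_{3,1}$ carrying the sign representation of $S_3$. Second, for each $m \geq 1$, I obtain $A'_{m+2,m}$ as a basis for the $\bigwedge^2 Std_{m+2}$-isotypic subrepresentation sitting inside the FI$_1$-submodule generated by $A'_{3,1}$ and $[\iota_0]$. Third, for $n > m+2$, I take $A'_{n,m}$ to be a basis for the $n$-th degree piece of the FI$_1$-submodule generated by $A'_{m+2,m}$ and $[\iota_1]$; this is the free FI$_1$-module whose freeness yields property~(3) and whose rank matches $|A_{n,m}|$ by a direct count.

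For the base case, Proposition~\ref{BnisabasisforthefreeFI2mod} combined with Proposition~\ref{freeFId} and Pieri's rule identifies $\mathbb{Q}$-span$(B_3)$ as $S_3$-isomorphic to $2(Triv_3 \oplus Std_3)$, which contains no sign summand. Since $H_1(\text{cell}(3,2);\mathbb{Q})$ is seven-dimensional with basis $A_3 \cup B_3$ from Theorem~\ref{basistheorem}, the one-dimensional complement of $\text{span}(B_3)$ is forced to be the unique sign line. Setting
\[
z'(1|3\,2) := \tfrac{1}{6}\sum_{\sigma \in S_3}\operatorname{sgn}(\sigma)\,\sigma \cdot z(1|3\,2), \qquad A'_{3,1} := \{z'(1|3\,2)\},
\]
the sign projector annihilates $\text{span}(B_3)$ but not $z(1|3\,2)$ (the latter being linearly independent from $B_3$), so $z'(1|3\,2)$ is nonzero and spans $\bigwedge^2 Std_3 \cong \operatorname{sgn}_3$, verifying property~(4) for $m=1$.

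For $m \geq 2$, Proposition~\ref{freeFId} with $d=1$ computes the $(m+2)$-th degree of the free FI$_1$-module $M^{FI_{1}}(\operatorname{sgn}_3)$ as $\text{Ind}_{S_3 \times S_{m-1}}^{S_{m+2}}(\operatorname{sgn}_3 \boxtimes Triv)$, which by Pieri decomposes as $\bigwedge^2 Std_{m+2} \oplus \bigwedge^3 Std_{m+2}$ (the second summand vanishing when $m=1$). Let $N$ denote the FI$_1$-submodule of $H_1(\text{cell}(\bullet,2);\mathbb{Q})$ generated by $A'_{3,1}$ and $[\iota_0]$; I take $A'_{m+2,m}$ to be a basis for the $\bigwedge^2 Std_{m+2}$-isotypic summand of $N_{m+2}$. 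A direct enumeration gives $|A_{n,m}| = \binom{n}{m+2}\binom{m+1}{2}$ (choose an $(m+2)$-subset $S \subseteq [n]$ for the left singletons together with the $2$-block, then pick a pair from $S \setminus \{\min S\}$ for the $2$-block), which matches $\dim M^{FI_{1}}(\bigwedge^2 Std_{m+2})_n$ by Proposition~\ref{buidanFIdfromanFB}. Letting $P$ be the FI$_1$-submodule of $H_1(\text{cell}(\bullet,2);\mathbb{Q})$ generated by $A'_{m+2,m}$ and $[\iota_1]$ and taking the $\mathbb{Q}$-span of $A'_{n,m}$ to be $P_n$, property~(3) (freeness of $P$) together with $|A'_{n,m}| = |A_{n,m}|$ (property~(1)) reduces to producing $\binom{n}{m+2}\binom{m+1}{2}$ linearly independent elements of $P_n$, which is established using Lemma~\ref{switchsingles} and the $S_n$-orbit structure on $A'_{m+2,m}$. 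Property~(2) then follows by induction on $m$ from the containment $P_n \subseteq \text{span}(A_{n,m} \cup B_{n,0} \cup \cdots \cup B_{n,m})$ and the matching of dimensions.

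The main obstacle will be verifying non-vanishing of the $\bigwedge^2 Std_{m+2}$-isotypic summand of $N_{m+2}$ in the middle stage: a priori the FI$_1$-submodule $N$ need not realize $M^{FI_{1}}(\operatorname{sgn}_3)$ faithfully inside $H_1(\text{cell}(\bullet,2);\mathbb{Q})$, so the Pieri summand could in principle collapse. I expect this is handled by an explicit $S_{m+2}$-character comparison showing that the image of $\text{span}(A_{m+2,m})$ in $H_1(\text{cell}(m+2,2);\mathbb{Q})/\text{span}(B_{m+2})$ carries $\bigwedge^2 Std_{m+2}$, equivalently that iterated $[\iota_0]$-insertion preserves linear independence modulo $\text{span}(B_{m+2})$. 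Once this non-vanishing is established, the dimension count in stage three forces $P$ to be free and all four properties follow.
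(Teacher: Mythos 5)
Your three-stage architecture is essentially the paper's: an explicit sign-isotypic generator $z'(1|3\,2)$ in degree $3$; then $A'_{m+2,m}$ as a basis for the $\bigwedge^{2}Std_{m+2}$-part of the FI$_{1}$-submodule generated by $A'_{3,1}$ and $[\iota_{0}]$; then $A'_{n,m}$ from $A'_{m+2,m}$ and $[\iota_{1}]$, with the count $|A_{n,m}|=\binom{n}{m+2}\binom{m+1}{2}=\dim M^{FI_{1}}(\bigwedge^{2}Std_{m+2})_{n}$ closing the argument. Your base case via the sign projector is fine and agrees (up to scalar) with the paper's explicit average of two embedded loops.

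The gap sits exactly at the step you yourself flag as ``the main obstacle,'' and your sketch does not close it. Two things are missing. First, the non-vanishing mechanism: the paper shows that the FI$_{1}$-submodule $N$ generated by $A'_{3,1}$ and $[\iota_{0}]$ surjects in degree $m+2$ onto the quotient of $H_{1}\big(\text{cell}(m+2,2);\Q\big)$ by the span of \emph{everything} previously constructed --- not just $B_{m+2}$ but also $A'_{m+2,l}$ for $l<m$ --- and it establishes this by a leading-value bookkeeping: expanding $z(i_{m})|\cdots|z(i_{1})|z'(i_{m+1}|i_{m+3}\,i_{m+2})$ in Alpert's basis exhibits each element of $A_{m+2,m}$ as the unique summand of maximal $L$-value, while the inductive property (2) for $l<m$ places all remaining summands in the previously-built span. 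Your phrase ``linear independence modulo $\text{span}(B_{m+2})$'' omits the $A'_{m+2,l}$ for $l<m$; without them the target quotient does not have dimension $\binom{m+1}{2}$ and the dimension count cannot force anything. Second, and more decisively: you propose to identify the surviving summand of $N_{m+2}$ inside the Pieri decomposition $\bigwedge^{2}Std_{m+2}\oplus\bigwedge^{3}Std_{m+2}$ by a dimension count, but at $m=4$ (so $m+2=6$) one has $\dim\bigwedge^{2}Std_{6}=\binom{5}{2}=10=\binom{5}{3}=\dim\bigwedge^{3}Std_{6}$, so no dimension count can decide which irreducible survives in the quotient. The paper resolves this by explicitly computing the trace of the transposition $(23)$ on the ten classes represented by the elements of $A_{6,4}$, obtaining $+2$ and hence $\bigwedge^{2}Std_{6}$ rather than $\bigwedge^{3}Std_{6}$ (whose transposition character is $-2$). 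Without carrying out that character computation, or an equivalent argument, your induction halts at $m=4$ and property (4) is unproved from that point on.
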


Note that this theorem claims that there exists high-insertion maps $[\iota_{0}]$ and $[\iota_{1}]$ acting on the elements of $A'_{n,m}$.  This is allowed, and makes sense, due to Theorem \ref{basistheorem}, which provides a basis for $H_{1}\big(\text{cell}(n,2);\Q\big)$ such that every basis element has $1$ barrier. Thus, every element in $H_{1}\big(\text{cell}(n,2);\Q\big)$ is a linear combination of these basis elements, and more specifically every element in $A'_{m,n}$ is a linear combination of these basis elements. Recall that if we are given some linear combination of these basis elements, say $z=\sum a_{e}z(e)$, then 
\[
[\iota_{0}]z=[\iota_{0}]\sum a_{e}z(e)=\sum a_{e}[\iota_{0}]z(e)=\sum a_{e}z(\iota_{0}e)=\sum a_{e}z(n+1|e)
\]
\[
=\sum a_{e}z(n+1)|z(e)=z(n+1)|\sum a_{e}z(e)=z(n+1)|z.
\]
Similarly, we have that 
\[
[\iota_{1}]z=z|z(n+1)
\]
as homology classes after applying Lemma \ref{switchsingles}. Thus, it makes sense to consider the high-insertion maps $[\iota_{0}]$ and $[\iota_{1}]$ on the elements of our sets $A'_{n,m}$.

Before we begin the proof we make a quick remark on its structure. The proof is an induction argument. We first show that there is a set $A'_{3,1}$ with the desired properties. Given the existence of this set we show that there exist sets $A'_{n,1}$ with the desired properties arising from $A'_{3,1}$. We then assume the existence of sets $A'_{n,m}$ for all $m\le k$ satisfying the theorem state that arise from the previous sets. We use this to show that that there exists a set $A'_{k+3,k+1}$ with the desired properties that ``plays nicely" with the previous sets. We finally show that there exists sets $A'_{n,k+1}$ for all $n\ge k+3$ with the desired properties that such that all sets constructed have the desired properties. See Figure \ref{proofstructure} for a diagram.

\begin{figure}[H]\label{proofstructure}
\centering
\begin{tikzpicture} \footnotesize
  \matrix (m) [matrix of math nodes, nodes in empty cells, nodes={minimum width=3ex, minimum height=5ex, outer sep=2ex}, column sep=3ex, row sep=3ex]{
  &  & &  &&&&&\\
&4   &  \emptyset  &\emptyset &  \emptyset   &\emptyset &\emptyset&A'_{6,4}&A'_{7,4}&\\  
&3   &  \emptyset   & \emptyset   & \emptyset & \emptyset & A'_{5,3} &A'_{6,3} &A'_{7,3}&\\          
m&2     &  \emptyset   & \emptyset   & \emptyset& A'_{4,2} & A'_{5,2}&A'_{6,2}& A'_{7,2}&\\             
&1    &  \emptyset  & \emptyset & A'_{3,1}  & A'_{4,1}  & A'_{5,1}&A'_{6,1}&A'_{7,1}&\\       
&    \strut &   1  &  2  &  3  & 4  &5&6&7&\\
& &  &  &   & n  &&&&\\
}; 

\draw[-stealth, red] (m-5-5) -- (m-5-10) [midway,above] node [midway,below] {} ;
\draw[-stealth, red] (m-4-6) -- (m-4-10) [midway,above] node [midway,below] {} ;
\draw[-stealth, red] (m-3-7) -- (m-3-10) [midway,above] node [midway,below] {} ;
\draw[-stealth, red] (m-2-8) -- (m-2-10) [midway,above] node [midway,below] {} ;

 \draw[thick] (m-2-2.east) -- (m-6-2.east) ;
 \draw[thick] (m-6-2.north) -- (m-6-9.north east) ;
\end{tikzpicture}
\caption{The induction argument of Theorem \ref{betterbasisforH1}. We first prove the existence of a set $A'_{3,1}$ and use this to construct the sets in the bottom row. We then induct, assuming we can construct all the sets up through the $k^{\text{th}}$-row. We use this assumption to construct the first nontrivial set in the $(k+1)^{\text{th}}$ row $A'_{k+3,k+1}$, and use this set to show that we can construct all the sets in that row such that all sets constructed have the desired properties.}
\label{e2n8}
\end{figure}
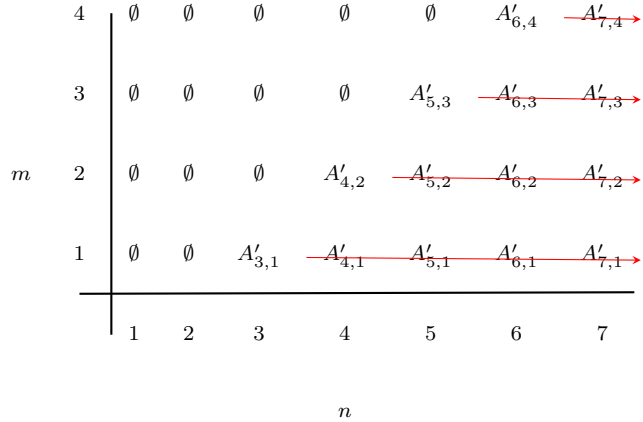

\begin{proof}
We begin by showing there is a set $A'_{3,1}$ with the desired properties. Let $A'_{3,1}=\{z'(1|3\,2)\}$, where
\[
z'(1|3\,2)=\frac{1}{2}\big(1|3\,2-1|2\,3+3\, 1|2-1\,3|2+3|2\,1-3|1\, 2-3\,2|1+2\,3|1-2|3\,1+2|1\,3-2\,1|3+1\,2|3\big).
\]
This is the average of the large inner (blue) and outer (yellow) loops of Figure \ref{1cycleincell(3,2)} if they both have a clockwise orientation, and we can write $z'(1|3\,2)$ as a sum of elements from the basis of Theorem \ref{basistheorem} as follows
\[
z'(1|3\,2)=z(1|3\,2)+\frac{1}{2}\big(-z(1|2\,3)+z(2|1\,3)-z(3|1\,2)+z(2\,3|1)-z(1\,3|2)+z(1\,2|3)\big).
\]
From this it follows that $z'(1|3\,2)$ is nontrivial and $|A'_{3,1}|=|A_{3,1}|$, so $A'_{3,1}$ satisfies property $1$. It also follows that the $\Q$-span of $B_{3}\cup A'_{3,1}$ equals the $\Q$-span of $B_{3}\cup A_{3,1}$, so $A'_{3,1}$ satisfies property $2$. 

One can check that transpositions act on $z'(1|3\,2)$ by multiplication by $-1$ and that that $3$-cycles act by multiplication by $1$. Therefore, the $\Q$-span of $z'(1|3\,2)$ is $S_{3}$-invariant and is isomorphic to the alternating representation of $S_{3}$. Since the $\Q$-span of $A'_{3,1}$ is trivially the $3^{\text{rd}}$-degree term of the FI$_{1}$-submodule of $H_{1}\big(\text{cell}(\bullet, 2);\Q\big)$ generated by the $\Q$-span of $A'_{3,1}$ and $[\iota_{1}]$, $A'_{3,1}$ satisfies property $3$.

The alternating representation of $S_{3}$ is isomorphic to $\bigwedge^{2}Std_{3}$, where $Std_{3}$ is the standard representation of $S_{3}$, and the $\Q$-span of $A'_{3,1}$ is trivially a subrepresentation of $3^{\text{rd}}$-degree term of the FI$_{1}$-submodule of $H_{1}\big(\text{cell}(\bullet, 2);\Q\big)$ generated by the $\Q$-span of $A'_{3,1}$ and $[\iota_{0}]$. Therefore, $A'_{3,1}$ satisfies property $4$.

Given $A'_{3,1}$, we construct sets $A'_{n,1}$ for all $n>3$ such that the $A'_{n,1}$ satisfy properties $1$ through $3$ of the theorem. To do this, first define a class of injected cycles. For $i_{1}, i_{2}, i_{3}\in [n]$ such that $i_{3}>i_{2}>i_{1}$, let $z'(i_{1}|i_{3}\,i_{2})$ be the injected cycle arising from $z'(1|3\,2)$ and the order preserving injection from $\{1,2,3\}$ into $\{i_{1}, i_{2},i_{3}\}$. For $n>3$, let $A'_{n,1}$ consist of all of the $1$-cycles of the form 
\[
z'(i_{1}|i_{3}\,i_{2}|i_{4}|\cdots|i_{n}):=z'(i_{1}|i_{3}\,i_{2})|z(i_{4})|\cdots|z(i_{n}),
\]
where $i_{j}\in [n]$, $i_{j}\neq i_{k}$ if $j\neq k$, $i_{3}>i_{2}>i_{1}$ and $i_{4}>\cdots>i_{n}$. The elements of $A_{n,1}$ are enumerated in the same way, so $|A'_{n,1}|=|A_{n,1}|$.

We have that
\[
z'(i_{1}|i_{3}\,i_{2})=z(i_{1}|i_{3}\,i_{2})+\frac{1}{2}\big(-z(i_{1}|i_{2}\,i_{3})+z(i_{3}|i_{1}\,i_{2})-z(i_{2}|i_{1}\,i_{3})+z(i_{2}\,i_{3}|i_{1})-z(i_{1}\,i_{2}|i_{3})+z(i_{1}\,i_{3}|i_{2})\big),
\]
so
\begin{multline*}
z'(i_{1}|i_{3}\,i_{2})|z(i_{4})|\cdots|z(i_{n})=\Big(z(i_{1}|i_{3}\,i_{2})+\frac{1}{2}\big(-z(i_{1}|i_{2}\,i_{3})+z(i_{3}|i_{1}\,i_{2})-z(i_{2}|i_{1}\,i_{3})\\
+z(i_{2}\,i_{3}|i_{1})-z(i_{1}\,i_{2}|i_{3})+z(i_{1}\,i_{3}|i_{2})\big)\Big)|z(i_{4})|\cdots|z(i_{n}).
\end{multline*}
This simplifies to
\begin{multline*}
z(i_{1}|i_{3}\,i_{2}|i_{4}|\cdots|i_{n})+\frac{1}{2}\Big(-z(i_{1}|i_{2}\,i_{3}|i_{4}|\cdots|i_{n})+z(i_{3}|i_{1}\,i_{2}|i_{4}|\cdots|i_{n})
-z(i_{2}|i_{1}\,i_{3}|i_{4}|\cdots|i_{n})\\
+z(i_{2}\,i_{3}|i_{1}|i_{4}|\cdots|i_{n})-z(i_{1}\,i_{2}|i_{3}|i_{4}|\cdots|i_{n})+z(i_{1}\,i_{3}|i_{2}|i_{4}|\cdots|i_{n})\Big).
\end{multline*}
Note that $z(i_{1}|i_{3}\,i_{2})$, $z(i_{3}|i_{1}\,i_{2})$, $z(i_{2}|i_{1}\,i_{3})$, $z(i_{3}\,i_{2}|i_{1})$, $z(i_{1}\,i_{2}|i_{3})$, and $z(i_{1}\,i_{3}|i_{2})$ are injected cycles arising from $B_{3}=B_{3,0}\cup B_{3,1}$. By Lemma \ref{switchsingles} we can reorder the singletons and get homotopic elements so that $z(i_{1}|i_{3}\,i_{2}|i_{4}|\cdots|i_{n})$, $z(i_{3}|i_{1}\,i_{2}|i_{4}|\cdots|i_{n})$, $z(i_{2}|i_{1}\,i_{3}|i_{4}|\cdots|i_{n})$, $z(i_{3}\,i_{2}|i_{1}|i_{4}|\cdots|i_{n})$, $z(i_{1}\,i_{2}|i_{3}|i_{4}|\cdots|i_{n})$, and $z(i_{1}\,i_{3}|i_{2}|i_{4}|\cdots|i_{n})$ are homotopic, and therefore homologous, to elements of $B_{n,0}\cup B_{n,1}$.

Note that $z'(i_{1}|i_{2}\,i_{3}|i_{4}|\cdots|i_{n})$ is the only element of $A'_{n,1}$ to contain $z(i_{1}|i_{2}\,i_{3}|i_{4}|\cdots|i_{n})\in A_{n,1}$ as a nontrivial summand, and each element of $A'_{n,1}$ contains only $1$ element of $A_{n,1}$ as a nontrivial summand. Since the basis elements of Theorem \ref{basistheorem}, and specifically the elements of $A_{n,1}$, are linearly independent, it follows that the elements of $A'_{n,1}$ are linearly independent, as the summands from $A_{n,1}$ cannot cancel out; hence, $A'_{n,1}$ satisfies property $1$. 

Since $z(i_{1}|i_{3}\,i_{2}|i_{4}|\cdots|i_{n})$, $z(i_{3}|i_{1}\,i_{2}|i_{4}|\cdots|i_{n})$, $z(i_{2}|i_{1}\,i_{3}|i_{4}|\cdots|i_{n})$, $z(i_{3}\,i_{2}|i_{1}|i_{4}|\cdots|i_{n})$, $z(i_{1}\,i_{2}|i_{3}|i_{4}|\cdots|i_{n})$, and $z(i_{1}\,i_{3}|i_{2}|i_{4}|\cdots|i_{n})$ are homologous to elements in $B_{n,1}\cup B_{n,0}$, the equations above and the previous paragraph prove that the $\Q$-span of $B_{n,0}\cup A'_{n,1}\cup B_{n,1}$ in $H_{1}\big(\text{cell}(n,2);\Q\big)$ is equal to the $\Q$-span of $B_{n,0}\cup A_{n,1}\cup B_{n,1}$ in $H_{1}\big(\text{cell}(n,2);\Q\big)$, i.e., $A'_{n,1}$ satisfies property $2$.

Next, we check that the sets $A'_{n,1}$ satisfy property $3$. We do this by showing that the elements of $A'_{n,1}$ are in the FI$_{1}$-submodule of $H_{1}\big(\text{cell}(\bullet, 2);\Q\big)$ generated by the $\Q$-span of $A'_{3,1}$ and the high-inclusion map $[\iota_{1}]$, that this FI$_{1}$-submodule is free, and then showing that the elements of $A'_{n, 1}$ span this free FI$_{1}$-module.

Note that 
\[
z'(1|3\,2|n|\cdots|4)=z'(1|3\,2)|z(n)|\cdots|z(4)=[\iota_{1}]^{n-3}z'(1|3\,2),
\]
where $[\iota_{1}]^{n-3}$ means we apply $[\iota_{1}]$ a total of $n-3$ times to $z'(1|3\,2)$, so $z'(1|3\,2|n|\cdots|4)$ is in the FI-submodule of $H_{1}\big(\text{cell}(\bullet, 2);\Q\big)$ generated by the $\Q$-span of $z'(1|3\,2)$ and $[\iota_{1}]$. Given $i_{1},\dots, i_{n}$ such that $i_{j}\in [n]$, $i_{j}\neq i_{k}$ if $j\neq k$, $i_{3}>i_{2}>i_{1}$ and $i_{n}>\cdots>i_{4}$, let $\sigma\in S_{n}$ be the permutation that sends $j$ to $i_{j}$. Then
\[
\sigma\big(z'(1|3\,2|n|\cdots|4)\big)=\sigma\big(z'(1|3\,2)\big)|\sigma\big(z(n)\big)|\cdots|\sigma\big(z(4)\big)=z'(i_{1}|i_{3}\,i_{2})|z(i_{n})|\cdots|z(i_{4})=z'(i_{1}|i_{3}\,i_{2}|i_{n}|\cdots|i_{4})
\]
Therefore, $z'(i_{1}|i_{3}\,i_{2}|i_{4}|\cdots|i_{n})$ is in the FI$_{1}$-submodule of $H_{1}\big(\text{cell}(\bullet, 2);\Q\big)$ generated by the $\Q$-span of $A'_{3,1}$ and $[\iota_{1}]$. Since every element in $A'_{n,1}$ is of this form, it follows that the $\Q$-span of $A'_{n,1}$ is in the FI$_{1}$-submodule of $H_{1}\big(\text{cell}(\bullet, 2);\Q\big)$ generated by the $\Q$-span of $A'_{3,1}$ and $[\iota_{1}]$.

Note that there are $\binom{n}{3}$ elements in $A'_{n,1}$. To see this, note that there are $\binom{n}{3}$ ways to choose $3$ elements from $[n]$ to be the labels $i_{1}, i_{2}, i_{3}$ such that $i_{1}<i_{2}<i_{3}$, and $1$ way to put the remaining $n-3$ in decreasing order. Since the elements of $A'_{n,1}$ are linearly independent, it follows that the $n^{\text{th}}$-degree term of the FI$_{1}$-submodule of $H_{1}\big(\text{cell}(\bullet, 2);\Q\big)$ generated by $A'_{3,1}$ and $[\iota_{1}]$ is at least $\binom{n}{3}$ dimensional.

Recall the $\Q$-span of $A'_{3,1}$ is isomorphic to $\bigwedge^{2}Std_{3}$ as an $S_{3}$-representation. The dimension of the $n^{\text{th}}$-degree of the free FI-module generated by $\bigwedge^{2}Std_{3}$ is 
\[
\dim\bigg(M^{\text{FI}}\Big(\bigwedge\nolimits^{2}Std_{3}\Big)_{n}\bigg)=\binom{n}{3}\dim\big(\bigwedge\nolimits^{2}Std_{3}\big)=\binom{n}{3}.
\]
Therefore, the $n^{\text{th}}$-degree term of the FI$_{1}$-submodule of $H_{1}\big(\text{cell}(\bullet, 2);\Q\big)$ generated by the $\Q$-span of $A'_{3,1}$ and $[\iota_{1}]$ is at most $\binom{n}{3}$ dimensional, as it is isomorphic to a FI$_{1}$-submodule of this free FI$_{1}$-module. Since the $\Q$-span of $A'_{n, 1}$ is in the $n^{\text{th}}$ degree of this FI$_{1}$-submodule and is $\binom{n}{3}$ dimensional it follows that $A'_{n,1}$ is basis for the $n^{\text{th}}$-degree of the FI$_{1}$-submodule of $H_{1}\big(\text{cell}(\bullet, 2);\Q\big)$ generated by the $\Q$-span of $A'_{3,1}$ and $[\iota_{1}]$. Since this is true for all $n$, the FI$_{1}$-submodule of $H_{1}\big(\text{cell}(\bullet, 2);\Q\big)$ generated by the $\Q$-span of $A'_{3,1}$ and $[\iota_{1}]$ is free and is isomorphic to $M^{FI}(\bigwedge^{2}Std_{3})$. Therefore $A'_{n,1}$ satisfies property $3$.


Now we assume that for $m\le k$ and $n\ge m+2$, there are sets $A'_{n,m}$ satisfying the theorem. We will show that for $m=k+1$, and $n\ge k+3$ there are sets $A'_{n,k+1}$ satisfying properties $1$ through $4$. We first show that there is a set $A'_{k+3,k+1}$ satisfying the properties of the theorem. We will use that set to build the sets $A'_{n,k+1}$ for $n>k+3$ much like we did for the sets $A'_{n,1}$.

By Corollary \ref{AlpertsBIGTheoremcor} and Corollary \ref{makesmallerFId} the $\Q$-span of $A'_{3,1}$ and $[\iota_{0}]$ generates an FI$_{1}$-submodule of $H_{1}\big(\text{cell}(\bullet, 2);\Q\big)$. Recall that $S_{3}$ acts as the alternating representation on the $\Q$-span of $A'_{3,1}$, which is isomorphic to $\bigwedge^{2}Std_{3}$; therefore, this FI$_{1}$-submodule is isomorphic to an FI$_{1}$-submodule of the free FI$_{1}$-module generated by $\bigwedge^{2}Std_{3}$. The $n^{\text{th}}$-degree term of the free FI$_{1}$-module generated by $\bigwedge^{2}Std_{3}$ decomposes into $2$ irreducible representations
\[
M^{\text{FI}}\Big(\bigwedge\nolimits^{2}Std_{3}\Big)_{n}=\bigwedge\nolimits^{2}Std_{n}\oplus \bigwedge\nolimits^{3}Std_{n}.
\]

Note,
\[
\dim\bigg(M^{\text{FI}}\Big(\bigwedge\nolimits^{2}Std_{3}\Big)_{n}\bigg) =\binom{n}{3},\indent \dim\Big(\bigwedge\nolimits^{2}Std_{n}\Big)=\binom{n-1}{2},\indent\text{and}\indent\dim\Big(\bigwedge\nolimits^{3}Std_{n}\Big)=\binom{n-1}{3}.
\]

Next, we calculate the dimension of $A_{k+3,k+1}$, as we must have that $|A'_{k+3,k+1}|=|A_{k+3,k+1}|$ if $A'_{k+3,k+1}$ is to satisfy property $1$. The set $A_{k+3,k+1}$ consists of all elements of the form $z(i_{k+3}|\cdots|i_{1}|i_{3}\,i_{2})$ where $i_{j}\in [k+3]$, $i_{j}\neq i_{l}$ for $j\neq l$, $i_{k+3}>\cdots>i_{4}>i_{1},$ and $i_{3}>i_{2}>i_{1}$. By Theorem \ref{basistheorem} the elements of $A_{k+3,k+1}$ are linearly independent. The conditions $i_{j}\in [k+3]$, $i_{j}\neq i_{l}$ for $j\neq l$, $i_{k+3}>\cdots>i_{4}>i_{1},$ and $i_{3}>i_{2}>i_{1}$ imply that $i_{1}=1$. There are $\binom{k+2}{2}$ ways to choose $i_{2}$ and $i_{3}$ from the remaining $k+2$ elements $2,\dots, k+3$ such that $i_{3}>i_{2}$, and there is one way to put the remaining $k$ elements in decreasing order. Thus $|A_{k+3,k+1}|=\binom{k+2}{2}$. Thus, for a set $A'_{k+3,k+1}$ to satisfy property $1$ of the theorem we must have that $|A'_{k+3,k+1}|=\binom{k+2}{2}$, and the elements of $A'_{k+3,k+1}$ must be linearly independent. We claim that the $\bigwedge^{2}Std_{k+3}$ part of the $(k+3)^{\text{th}}$-degree of the FI$_{1}$-submodule of $H_{1}\big(\text{cell}(\bullet, 2);\Q)\big)$ generated by the $\Q$-span of $A'_{3,1}$ and $[\iota_{0}]$ is nontrivial in $H_{1}\big(\text{cell}(k+3,2);\Q\big)$, and that any basis for this representation will work as our set $A'_{k+3,k+1}$.

First, we show that the $(k+3)^{\text{th}}$-degree term of the FI$_{1}$-submodule of $H_{1}\big(\text{cell}(\bullet,2);\Q\big)$ generated by the $\Q$-span of $A'_{3,1}$ and $[\iota_{0}]$ is nontrivial in $H_{1}\big(\text{cell}(k+3,2);\Q\big)$. To see this note that 
\[
[\iota_{0}]^{k}z'(1|3\,2)=z(k+3)|\cdots|z(4)|z'(1|3\,2).
\]
Since $z'(1|3\,2)$ is a nontrivial linear combination of Alpert's basis elements it follows that this concatenation product is a nontrivial cycle in $H_{1}\big(\text{cell}(k+3,2);\Q\big)$, and the $(k+3)^{\text{th}}$-degree term of this FI-module is nontrivial.

Every cycle of the form
\[
z(i_{k})|\cdots|z(i_{1})|z'(i_{k+1}|i_{k+3}\,i_{k+2}),
\]
where $i_{j}\in [k+3]$, $i_{j}\neq i_{l}$ for $j\neq l$, $i_{k}>\cdots>i_{1}>i_{k+1}$, and $i_{k+3}>i_{k+2}>i_{k+1}$ is nonzero in this FI$_{1}$-submodule. This can be seen by applying the symmetric group element $\sigma$ that sends $j\mapsto i_{j-3}$ for $4\le j\le k+3$, and $1\mapsto i_{k+1}$, $2\mapsto i_{k+2}$, and $3\mapsto i_{k+3}$ to $z(k+3)|\cdots|z(4)|z'(1|3\,2)$. We can decompose these elements as sums of basis elements from Theorem \ref{basistheorem}:
\begin{multline*}
z(i_{k})|\cdots|z(i_{1})|z'(i_{k+1}|i_{k+3}\,i_{k+2})=z(i_{1})|\cdots|z(i_{k})|\Big(z(i_{k+1}|i_{k+3}\,i_{k+2})+\frac{1}{2}\big(-z(i_{k+1}|i_{k+2}\,i_{k+3})\\
+z(i_{k+2}|i_{k+1}\,i_{k+3})-z(i_{k+3}|i_{k+1}\,i_{k+2})+z(i_{k+2}\,i_{k+3}|i_{k+1})-z(i_{k+1}\,i_{k+3}|i_{k+2})+z(i_{k+1}\,i_{k+2}|i_{k+3})\big)\Big),
\end{multline*}
which is equal to
\begin{multline*}
z(i_{k}|\cdots|i_{1}|i_{k+1}|i_{k+3}\,i_{k+2})+\frac{1}{2}\big(-z(i_{k}|\cdots|i_{1}|i_{k+1}|i_{k+2}\,i_{k+3})+z(i_{k}|\cdots|i_{1}|i_{k+2}|i_{k+1}\,i_{k+3})\\-z(i_{k}|\cdots|i_{1}|i_{k+3}|i_{k+1}\,i_{k+2})
+z(i_{k}|\cdots|i_{1}|i_{k+2}\,i_{k+3}|i_{k+1})\\-z(i_{k}|\cdots|i_{1}|i_{k+1}\,i_{k+3}|i_{k+2})+z(i_{k}|\cdots|i_{1}|i_{k+1}\,i_{k+2}|i_{k+3})\big).
\end{multline*}
Note that this decomposition contains $z(i_{k}|\cdots|i_{1}|i_{k+1}|i_{k+3}\,i_{k+2})$, where $i_{j}\in [k+3]$, $i_{j}\neq i_{l}$ for $j\neq l$, $i_{k}>\cdots>i_{1}>i_{k+1}$, and $i_{k+3}>i_{k+2}>i_{k+1}$ as a nontrivial summand. These are the elements of  $A_{k+3,k+1}$, and every element in $A_{k+3,k+1}$ appears in exactly one of these decompositions, as the other elements in these decompositions are homotopic to elements in $B_{k+3,k+1}\cup B_{k+3,k}$ by Lemma \ref{switchsingles}.

By the induction hypothesis the $\Q$-span of $B_{k+3,0}\bigcup_{1\le l\le k}(A'_{k+3,l}\cup B_{k+3,l})$ in $H_{1}\big(\text{cell}(k+3, 2);\Q\big)$ is equal to the $\Q$-span of $B_{k+3,0}\bigcup_{1\le l\le k}(A_{k+3,l}\cup B_{k+3,l})$ in $H_{1}\big(\text{cell}(k+3, 2);\Q\big)$. Thus, for every element $z$ in $\Q$-span of $B_{k+3,0}\bigcup_{1\le l\le k}(A'_{k+3,l}\cup B_{k+3,l})$, the leading value of $z$, is at most $2k$, i.e., $L(z)\le 2k$. Note that any element in the $\Q$-span of $B_{k+3,k+1}$ has leading value $2k+1$, so the $\Q$-span of $B_{k+3,k+1}$ in $H_{1}\big(\text{cell}(k+3, 2);\Q\big)$, and the $\Q$-span of $B_{k+3,0}\bigcup_{1\le l\le k}(A'_{k+3,l}\cup B_{k+3,l})$ in $H_{1}\big(\text{cell}(k+3, 2);\Q\big)$, which is the same as the $\Q$-span of $B_{k+3,0}\bigcup_{1\le l\le k}(A_{k+3,l}\cup B_{k+3,l})$ in $H_{1}\big(\text{cell}(k+3, 2);\Q\big)$, must be disjoint. This, along with the previous paragraph, show that the $\Q$-span of the union of the $(k+3)^{\text{th}}$-degree term of the FI$_{1}$-submodule of $H_{1}\big(\text{cell}(\bullet, 2);\Q\big)$ generated by $A'_{3,1}$ and $[\iota_{0}]$ and $B_{k+3,0}\cup B_{k+3,k+1}\bigcup_{1\le l\le k}(A'_{k+3,l}\cup B_{k+3,l})$ in $H_{1}\big(\text{cell}(k+3, 2);\Q\big)$ is equal to the $\Q$-span $B_{k+3,0}\bigcup_{1\le l\le k+1}(A_{k+3,l}\cup B_{k+3,l})$ in $H_{1}\big(\text{cell}(k+3, 2);\Q\big)$, i.e., $H_{1}\big(\text{cell}(k+3,2);\Q\big)$.

By the induction hypothesis the $\Q$-span of $B_{k+3,0}\cup B_{k+3,k+1}\bigcup_{1\le l\le k}A'_{k+3,l}\cup B_{k+3,l}$ is $S_{k+3}$-invariant in $H_{1}\big(\text{cell}(k+3, 2);\Q\big)$, as $B_{k+3,0}\cup B_{k+3,k+1}\bigcup_{1\le l\le k}B_{k+3,l}=B_{k+3}$, and $B_{k+3}$ and the $A'_{k+3,l'}$s span $S_{k+3}$-subrepresentations of $H_{1}\big(\text{cell}(k+3, 2);\Q\big)$. Since the $\Q$-span of the $(k+3)^{\text{rd}}$-degree term of the FI$_{1}$-submodule of $H_{1}\big(\text{cell}(\bullet, 2);\Q\big)$ generated by $A'_{3,1}$ and $[\iota_{0}]$ together with the $\Q$-span of $B_{k+3,0}\cup B_{k+3,k+1}\bigcup_{1\le l\le k}(A'_{k+3,l}\cup B_{k+3,l})$ in $H_{1}\big(\text{cell}(k+3, 2);\Q\big)$ is $H_{1}(\text{cell}(k+3,2);\Q\big)$, it follows that the inclusion of $(k+3)^{\text{rd}}$-degree term of the FI$_{1}$-submodule of $H_{1}\big(\text{cell}(\bullet, 2);\Q\big)$ generated by $A'_{3,1}$ and $[\iota_{0}]$ surjects onto the quotient of $H_{1}\big(\text{cell}(k+3,2);\Q\big)$ by the $\Q$-span of $B_{k+3,0}\cup B_{k+3,k+1}\cup_{1\le l\le k}(A'_{k+3,l}\cup B_{k+3,l})$, as $H_{1}\big(\text{cell}(k+3,2);\Q\big)$ is a semi-simple. Note that the dimension of this quotient space is the dimension of the span of $A_{k+3,k+1}$, i.e., $\binom{k+2}{3}$. 

When $k\neq 3$, it follows that $\dim(\bigwedge^{3}Std_{k+3})=\binom{k+2}{3}\neq\binom{k+2}{2}$. 
Since $\dim(\bigwedge^{2}Std_{k+3})=\binom{k+2}{2}$, and the FI$_{1}$-submodule of $H_{1}\big(\text{cell}(\bullet,2);\Q\big)$ generated by $A'_{3,1}$ and $[\iota_{0}]$ is isomorphic to a nontrivial summand of the free FI$_{1}$-module generated by $\bigwedge^{2}Std_{3}$, it follows from the Schur Lemma that the $\bigwedge^{3}Std_{k+3}$ part of the FI$_{1}$-submodule of $H_{1}\big(\text{cell}(\bullet,2);\Q\big)$ generated by $A'_{3,1}$ and $[\iota_{0}]$ is trivial under the map to the quotient space. 
Since the map from the FI$_{1}$-submodule of $H_{1}\big(\text{cell}(\bullet,2);\Q\big)$ generated by $A'_{3,1}$ and $[\iota_{0}]$ to the quotient is surjective, it follows that the $\bigwedge^{2}Std_{k+3}$ part of the FI$_{1}$-submodule of $H_{1}\big(\text{cell}(\bullet,2);\Q\big)$ generated by $A'_{3,1}$ and $[\iota_{0}]$ is isomorphic to the image of this map. 
Thus, the $\bigwedge^{2}Std_{k+3}$ part of the FI$_{1}$-module generated by $A'_{3,1}$ and $[\iota_{0}]$ direct sum the $\Q$-span of $B_{k+3,0}\cup B_{k+3,k+1}\bigcup_{1\le l\le k}(A'_{k+3,l}\cup B_{k+3,l})$ is equal to the $\Q$-span $B_{k+3,0}\cup\bigcup_{1\le l\le k+1}(A_{k+3,l}\cup B_{k+3,l})$, i.e., $H_{1}\big(\text{cell}(k+3,2);\Q\big)$. 
Any basis for the $\bigwedge^{2}Std_{k+3}$ part of the FI$_{1}$-submodule of $H_{1}\big(\text{cell}(\bullet,2);\Q\big)$ generated by $A'_{3,1}$ and $[\iota_{0}]$ works as our set $A'_{k+3,k+1}$ as it clearly satisfies point $1$, we have just shown that our sets $A'_{k+3, m}$ for $m\le k+1$ and $k\neq 3$ all satisfy points $2$ and $4$, and they trivially satisfies point $3$. 
Thus, all we have to do is check that this is true for when $k=3$.

In the case $k=3$ we need to show that there is a set $A'_{6,4}$ with the desired properties. By the induction hypothesis there exist sets $A'_{n,m}$ satisfying the properties of the theorem for all $m\le 3$ and $n\ge m+2$. As noted above, our argument for $n=6, m=4$ is slightly different as we cannot distinguish the irreducible representations $\bigwedge^{2}Std_{6}$ and $\bigwedge^{3}Std_{6}$ by counting dimensions as 
\[
\dim\Big(\bigwedge\nolimits^{2}Std_{6}\Big)=\binom{5}{2}=10=\binom{5}{3}=\dim\Big(\bigwedge\nolimits^{3}Std_{6}\Big).
\]
Instead, we use character theory to determine which of these two representations has yet to be accounted for in our decomposition of $H_{1}\big(\text{cell}(6,2);\Q\big)$.

The character of the transposition $\tau=(a b)\in S_{6}$ for the irreducible representation $\bigwedge^{2}Std_{6}$ is $2$ and for the irreducible representation $\bigwedge^{3}Std_{6}$ the character of $\tau$ is $-2$, see, for example, the character table for $S_{6}$ in \cite{james2001representations}. Since the $\Q$-span of $B_{6,4}\cup B_{6,0}\bigcup_{1\le l\le 3}(A'_{6,l}\cup B_{6,l})$ in $H_{1}\big(\text{cell}(6, 2);\Q\big)$ is equal to the $\Q$-span of $B_{6,4}\cup B_{6,0}\bigcup_{1\le l\le 3}(A_{6,l}\cup B_{6,l})$ in $H_{1}\big(\text{cell}(6, 2);\Q\big)$, it follows that we just need to determine the trace of $\tau$ on $A_{6,4}$ to decide which representation should be equal to the $\Q$-span of $A'_{6,4}$.

To determine the trace of the conjugacy class of the simple transpositions $(ab)\in S_{6}$ it suffices to determine the action of any transposition on a basis for $H_{1}\big(\text{cell}(6,2);\Q\big)$, and by the above it suffices to determine the action of any transposition on the homology classes represented by elements of $A_{6,4}$. For our purposes the transposition $(23)$ will yield the easiest calculations. 

Recall that the $10$ elements of $A_{6,4}$ are $z(6|5|4|1|3\,2)$, $z(6|3|2|1|5\,4)$, $z(5|3|2|1|6\,4)$, $z(4|3|2|1|6\:5)$, $z(6|5|3|1|4\,2)$, $z(6|5|2|1|4\,3)$, $z(6|4|3|1|5\,2)$, $z(6|4|2|1|5\,3)$, $z(5|4|3|1|6\,2)$, and $z(5|4|2|1|6\,3)$. 

We have that
\begin{align*}
(23)z(6|5|4|1|3\,2)=(23)z(6|5|4)|z(1|3\,2)=z(6|5|4)|\big((23)z(1|3\,2)\big)\\
=z(6|5|4)|\big((23)(1|3\,2+3\,1|2+3|2\,1-3\,2|1-2|3\,1-2\,1|3)\big)\\
=z(6|5|4)|\big(1|2\,3+2\,1|3+2|3\,1-2\,3|1-3|2\,1-3\,1|2)\big)\\
=z(6|5|4)|\big(-z(1|3\,2)+z(1|2\,3)-z(2\,3|1)\big)\\
=-z(6|5|4)|z(1|3\,2)+z(6|5|4)|z(1|2\,3)-z(6|5|4)|z(2\,3|1)\\
=-z(6|5|4|1|3\,2)+z(6|5|4|1|2\,3)-z(6|5|4|2\,3|1)
\end{align*}
Thus, the action of $(23)$ on $z(6|5|4|1|3\,2)$ adds a $-1$ to the trace.

Next note that,
\begin{align*}
(23)z(6|3|2|1|5\,4)=(23)\big(z(6)|z(3|2)|z(1|5\,4)\big)=z(6)|\big((23)z(3)|z(2)\big)|z(1|5\,4)\\
=z(6)|z(2)|z(3)|z(1|5\,4)=z(6)|z(3)|z(2)|z(1|5\,4),
\end{align*}
where we have used Proposition \ref{switchsingles} to write $z(2)|z(3)$ as $z(3)|z(2)$. This is homologous to
\[
z(6|3|2|1|5\,4).
\]
Thus, the action of $(23)$ on $z(6|3|2|1|5\,4)$ adds a $1$ to the trace. Similarly we can see that the action of $(23)$ on $z(5|3|2|1|6\,4)$ and $z(4|3|2|1|6\,5)$ will each add $1$ to the trace.

Next, we determine the action of $(23)$ on $z(6|5|3|1|4\,2)$ and $z(6|5|2|1|4\,3)$.
\begin{align*}
(23)z(6|5|3|1|4\,2)=(23)\big(z(6|5)|z(3)|z(1|4\,2)\big)=z(6|5)|\big((23)z(3)|z(1|4\,2)\big)\\
=z(6|5)|\big(z(2)|z(1|4\,3)\big)=z(6|5)|z(2)|z(1|4\,3)=z(6|5|2|1|4\,3).
\end{align*}
This also shows that $(23)z(6|5|2|1|4\,3)=z(6|5|3|1|4\,2)$. Thus the action of $\tau$ on $z(6|5|3|1|4\,2)$ and $z(6|5|2|1|4\,3)$ adds $0$ to the trace. We can check that $(23)$ similarly interchanges the homology classes of represented by the basis elements $z(6|4|3|1|5\,2)$ and $z(6|4|2|1|5\,3)$, and it also interchanges the homology classes represented by the basis elements $z(5|4|3|1|6\,2)$ and $z(5|4|2|1|6\,3)$, so these elements contribute $0$ to the trace of the action of $(23)$ on $H_{1}\big(\text{cell}(6,2);\Q\big)$.

Thus, we see that the action of $(23)$ on the elements of $A_{6,4}$ contributes $-1+1+1+1+0+0+0+0+0+0=2$ to the trace. Therefore, the character of a transposition on the remaining $10$-dimensional representation is $2$. Therefore, $H_{1}\big(\text{cell}(6,2);\Q\big)$ decomposes as a direct sum of the space spanned by the homology classes represented by elements of $B_{6,4}\cup B_{6,0}\bigcup_{1\le l\le 3}(A'_{6,l}\cup B_{6,l})$ and a space that is isomorphic to $\bigwedge^{2}Std_{6}$ as $S_{6}$-representations. Any basis for this space isomorphic to $\bigwedge^{2}Std_{6}$ in the $6^{\text{th}}$-degree term of the FI$_{1}$-submodule of $H_{1}\big(\text{cell}(\bullet, 2);\Q\big)$ generated by $z'(1|3\,2)$ and $[\iota_{0}]$ will work as our set $A'_{6,4}$, and will satisfy the properties of the theorem. 

Now that we have that there exists a set $A'_{k+3, k+1}$ satisfying the properties of the theorem in conjunction with sets $A'_{n,m}$ for $m\le k$ that satisfy the properties of theorem, we show that we can construct sets $A'_{n, k+1}$ for all $n\ge k+3$ satisfying the theorem. 

First, note that since we've found a set $A'_{k+3,k+1}$ satisfying the properties of the theorem, namely, property $1$, we can index elements of $A'_{k+3, k+1}$ in the same way as we do elements of $A_{k+3,k+1}$. As such, we denote the elements of $A'_{k+3,k+1}$ by
\[
z'(i_{1}|\dots|i_{k+1}|i_{k+3}\,i_{k+2}),
\]
where $i_{j}\in \{1,\dots, k+3\}$, $i_{j}\neq i_{l}$ for $j\neq l$, $i_{1}>\cdots>i_{k+1}$, and $i_{k+3}>i_{k+2}>i_{k+1}$.

For $n\ge k+3$ let $A'_{n,k+1}$ consist of cycles of the form
\[
z'(i_{1}|\cdots|i_{k+1}|i_{k+2}\,i_{k+3}|i_{4}|\cdots|i_{k+4}):=z'(i_{1}|\cdots|i_{k+1}|i_{k+2}\,i_{k+3})|z(i_{n})|\cdots|z(i_{k+4}),
\]
where $i_{j}\in [n]$, $i_{j}\neq i_{l}$ for $j\neq l$, $i_{1}>\cdots>i_{k+1}$, $i_{k+2}>i_{k+3}>i_{k+1}$, so that $z'(i_{1}|\cdots|i_{k+1}|i_{k+2}\,i_{k+3})$ is a properly ordered injected cycle arising from $A'_{k+3,k+1}$, and $i_{n}>\cdots>i_{k+4}$. We need to show that these sets $A'_{n,k+1}$ have properties $1$ through $3$.

To see that the elements of $A'_{n,k+1}$ are linearly independent note that by the induction hypothesis the $\Q$-span of $B_{k+3,0}\bigcup_{1\le l\le k+1}(A'_{k+3,l}\cup B_{k+3,l})$ in $H_{1}\big(\text{cell}(k+3, 2);\Q\big)$ equals the $\Q$-span of $B_{k+3,0}\bigcup_{1\le l\le k+1}(A_{k+3,l}\cup B_{k+3,l})$ in $H_{1}\big(\text{cell}(k+3, 2);\Q\big)$, so we can write every element of $A_{k+3,k+1}$ as a linear combination of terms in $A'_{k+3,k+1}$ and elements in 
\[
B_{k+3,0}\cup B_{k+3,k+1}\bigcup_{1\le l\le k}\left(A_{k+3,l}\cup B_{k+3,l}\right).
\] 
Thus, for each element $z(i_{1}|\cdots|i_{k+1}|i_{k+2}\,i_{k+3})\in A_{k+3,k+1}$ there are sums $\sum a_{e}z'(e)$ and $\sum a_{e'}z(e')$, such that $z'(e)\in A'_{k+3,k+1}$ and $z(e')\in B_{k+3,0}\cup B_{k+3,k+1}\bigcup_{1\le l\le k}\left(A_{k+3,l}\cup B_{k+3,l}\right)$ such that
\[
\sum a_{e}z'(e)+\sum a_{e'}z(e')=z(i_{1}|\cdots|i_{k+1}|i_{k+2}\,i_{k+3}).
\]
Thus, for every element
 \[
z(i_{1}|\cdots|i_{k+1}|i_{k+2}\,i_{k+3})|z(i_{n})|\cdots|z(i_{k+4})=z(i_{1}|\cdots|i_{k+1}|i_{k+2}\,i_{k+3}|i_{n}|\cdots|i_{k+4})\in A_{n,k+1},
\]
there is some linear combination
\[
\sum a_{e}z'(e)|z(i_{n})|\cdots|z(i_{k+4})+\sum a_{e'}z(e')|z(i_{n})|\cdots|z(i_{k+4})=z(i_{1}|\cdots|i_{k+1}|i_{k+2}\,i_{k+3})|z(i_{n})|\cdots|z(i_{k+4}),
\]
where $z'(e)|z(i_{k+3})|\cdots|z(i_{n})\in A'_{n,k+1}$, and $z(e')|z(i_{k+3})|\cdots|z(i_{n})\in B_{n,0}\cup B_{n,k+1}\bigcup_{1\le l\le k}\left(A_{n,l}\cup B_{n,l}\right)$. Therefore, the $\Q$-span of $A'_{n,k+1}\cup B_{n,0}\cup B_{n,k+1}\bigcup_{1\le l\le k}(A_{n,l}\cup B_{n,l})$  in $H_{1}\big(\text{cell}(n, 2);\Q\big)$ is equal to the $\Q$-span of $A_{n,k+1}\cup B_{n,0}\cup B_{n,k+1}\bigcup_{1\le l\le k}(A_{n,l}\cup B_{n,l})$ in $H_{1}\big(\text{cell}(n, 2);\Q\big)$. Since the basis elements of Theorem \ref{basistheorem} are linearly independent it follows that there is an $|A_{n,k+1}|$ dimensional subspace in the $\Q$-span of $A'_{n,k+1}$. Since the elements of $A_{n,k+1}$ are of the form $z'(i_{1}|\cdots|i_{k+1}|i_{k+2}\,i_{k+3})|z(i_{n})|\cdots|z(i_{k+4})$ where $i_{j}\in [n]$, $i_{j}\neq i_{l}$ for $j\neq l$, $i_{1}>\cdots>i_{k+1}$, $i_{k+2}>i_{k+3}>i_{k+1}$, and $i_{n}>\cdots>i_{k+4}$, the elements of $A'_{n,k+1}$ and $A_{n,k+1}$ are indexed in the same way. Thus $|A'_{n,k+1}|=|A_{n,k+1}|$. Since the $\Q$-span of $A'_{n,k+1}$ has an $|A_{n,k+1}|$-dimensional subspace the elements of $A'_{n,k+1}$ are linearly independent, i.e., $A'_{n,k+1}$ has property $1$. This argument also shows that our set $A'_{n,k+1}$ has property $2$. 

It remains to show that the set $A'_{n,k+1}$ has property $3$. Given a set of $k+3$ distinct elements $\{i_{1}, \dots, i_{k+3}\}$ in $[n]$ let $z'(i_{1}|\cdots|i_{k+1}|i_{k+2}\,i_{k+3})$ denote the injected cycle arising from $z'(j_{1}|\cdots|j_{k+1}|j_{k+2}\,j_{k+3})$ where $j_{l}\in [k+3]$, $j_{s}\neq j_{r}$ if $s\neq r$, $j_{1}>\cdots>j_{k+1}$, and $j_{k+2}>j_{k+3}>j_{k+1}$, where the map is order preserving, i.e., if $j_{r}<j_{s}$ the $i_{r}<i_{s}$. Let $\sigma\in S_{n}$ be the permutation that sends $j_{r}$ to $i_{r}$ and $t$ to $i_{t}$ for $t\ge k+4$. Then,
\[
\sigma(z'(j_{1}|\cdots|j_{k+1}|j_{k+2}\,j_{k+3})|z(n)|\cdots|z(k+4))=z'(i_{1}|\cdots|i_{k+1}|i_{k+2}\,i_{k+3})|z(i_{n})|\cdots|z(i_{k+4}).
\]
Note that $z'(j_{1}|\cdots|j_{k+1}|j_{k+2}\,j_{k+3})|z(n)|\cdots|z(k+4)=[\iota_{1}]^{n-k-3}z'(j_{1}|\cdots|j_{k+1}|j_{k+2}\,j_{k+3})$, it follows that the elements of our set $A'_{n,k+1}$ are contained in the FI$_{1}$-submodule of $H_{1}\big(\text{cell}(\bullet, 2);\Q\big)$ generated by $A'_{k+3, k+1}$ and $[\iota_{1}]$. Recall that the $\Q$-span of $A'_{k+3,k+1}$ is isomorphic to $\bigwedge^{2}Std_{k+3}$ as a $S_{k+3}$-representation. Note that the $n^{\text{th}}$-degree part of the free FI$_{1}$-module generated by $\bigwedge^{2}Std_{k+3}$ has dimension
\[
\dim\bigg(M^{\text{FI}_{1}}\Big(\bigwedge\nolimits^{2}Std_{k+3}\Big)_{n}\bigg)=\dim\Big(\bigwedge\nolimits^{2}Std_{k+3}\Big)\binom{n}{k+3}=\binom{k+2}{2}\binom{n}{k+3}.
\]
There are $\binom{k+2}{2}\binom{n}{k+3}$ elements of the form $z'(i_{1}|\cdots|i_{k+1}|i_{k+2}\,i_{k+3})|z(i_{n})|\cdots|z(i_{k+4}),$ where $i_{j}\in [n]$, $i_{j}\neq i_{l}$ for $j\neq l$, $i_{1}>\cdots>i_{k+1}$,$i_{k+2}>i_{k+3}>i_{k+1}$, and $i_{n}>\cdots>i_{k+4}$, as there are $\binom{n}{k+3}$ ways to choose $i_{1},\dots i_{k+3}$, and $\binom{k+2}{2}$ ways to put them in order so that they index an element of $A'_{k+3,k+1}$. There is $1$ way to put the remaining elements in decreasing order. Thus, the $\Q$-span of $A'_{n, k+1}$ in $H_{1}\big(\text{cell}(n, 2);\Q\big)$ is a subset of the FI$_{1}$-submodule of $H_{1}\big(\text{cell}(\bullet, 2);\Q\big)$ generated by $A'_{k+3,k+1}$ and $[\iota_{1}]$ is isomorphic to the free FI$_{1}$-module $M^{\text{FI}}\Big(\bigwedge\nolimits^{2}Std_{k+3}\Big)$, and is therefore free. This hold for all $n$, so $A'_{n,k+1}$ satisfies property $3$. This completes the induction step, and we have shown that for $n\ge 3$ and $1\le m\le n-2$ there are sets $A'_{n,m}$ with the properties of the theorem, proving the theorem.
\end{proof}

Next, we show that given a family of sets $\{A'_{n,m}\}_{1\le m\le n-2}$ satisfying the properties of Theorem \ref{betterbasisforH1}, we have that $B_{n,0}\bigcup_{1\le m\le n-2}(A'_{n,m}\cup B_{n,m})$ is a basis for $H_{1}\big(\text{cell}(n,2);\Q\big)$.

\begin{cor}\label{isabasisforH1}
Let for $1\le m\le n-2$ let  $A'_{n,m}$ satisfy the properties of Theorem \ref{betterbasisforH1}. Set $A'_{n}:=\bigcup_{m=1}^{n-2}A'_{n,m}$. Then $A'_{n}\cup B_{n}$ is a basis for $H_{1}\big(\text{cell}(n,2);\Q\big)$.
\end{cor}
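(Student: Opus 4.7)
The plan is to deduce the corollary directly from Theorem \ref{basistheorem} and properties $1$ and $2$ of Theorem \ref{betterbasisforH1} by a dimension count combined with an equality of spans.

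First, I would observe that by Theorem \ref{basistheorem}, the set $A_n \cup B_n$ (with $A_n = \bigcup_{m=1}^{n-2} A_{n,m}$) is a basis for $H_1\big(\text{cell}(n,2);\Q\big)$, so in particular
\[
\dim H_1\big(\text{cell}(n,2);\Q\big) = |A_n| + |B_n| = \sum_{m=1}^{n-2}|A_{n,m}| + |B_n|.
\]
By property $1$ of Theorem \ref{betterbasisforH1} we have $|A'_{n,m}| = |A_{n,m}|$ for every $1 \le m \le n-2$, and since the sets $A'_{n,m}$ are disjoint (they consist of cycles whose representatives have the unique two-element block in position $m{+}1, m{+}2$), summing over $m$ yields $|A'_n| = |A_n|$. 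Hence $|A'_n \cup B_n| = \dim H_1\big(\text{cell}(n,2);\Q\big)$, so it suffices to show that $A'_n \cup B_n$ spans.

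Next, I would apply property $2$ of Theorem \ref{betterbasisforH1} with $m = n-2$, which states that the $\Q$-span of
\[
B_{n,0}\bigcup_{1 \le l \le n-2}\big(A'_{n,l} \cup B_{n,l}\big)
\]
in $H_1\big(\text{cell}(n,2);\Q\big)$ equals the $\Q$-span of
\[
B_{n,0}\bigcup_{1 \le l \le n-2}\big(A_{n,l} \cup B_{n,l}\big).
\]
Since $B_n = B_{n,0} \cup \bigcup_{1 \le l \le n-2} B_{n,l}$, the first set is precisely $A'_n \cup B_n$ and the second is $A_n \cup B_n$. By Theorem \ref{basistheorem} the latter spans all of $H_1\big(\text{cell}(n,2);\Q\big)$, so the same holds for $A'_n \cup B_n$.

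Finally, combining the two observations: $A'_n \cup B_n$ is a spanning set whose cardinality equals the dimension of $H_1\big(\text{cell}(n,2);\Q\big)$, and therefore it is a basis. There is no real obstacle here; the content of the corollary is entirely packaged in properties $1$ and $2$ of Theorem \ref{betterbasisforH1}, and the proof is essentially a matter of assembling these facts.
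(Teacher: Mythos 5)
Your proof is correct and follows essentially the same route as the paper: property $1$ of Theorem \ref{betterbasisforH1} controls the cardinality, property $2$ at $m=n-2$ gives equality of spans with the basis of Theorem \ref{basistheorem}, and together these force $A'_{n}\cup B_{n}$ to be a basis. The one cosmetic difference is that the paper only asserts the cardinality of the primed union is \emph{at most} that of the unprimed one and lets the spanning property force equality, thereby sidestepping your disjointness claim for the sets $A'_{n,m}$ — whose stated justification is not literally accurate, since elements of $A'_{n,m}$ are linear combinations rather than single critical cells with a two-element block in a fixed position (though this does not affect the conclusion).
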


\begin{proof}
Since our sets $A'_{n,m}$ have satisfies property $1$ of Theorem \ref{betterbasisforH1} we have $|A'_{n,m}|=|A_{n,m}|$. Therefore, the cardinality of $B_{n,0}\bigcup_{1\le m\le n-2}(A'_{n,m}\cup B_{n,m})$ is at most the cardinality of $B_{n,0}\bigcup_{1\le m\le n-2}(A_{n,m}\cup B_{n,m})$. Since the sets $A'_{n,m}$ satisfy property $2$ of Theorem \ref{betterbasisforH1} the $\Q$-span of $B_{n,0}\bigcup_{1\le m\le n-2}(A'_{n,m}\cup B_{n,m})$ in $H_{1}\big(\text{cell}(n, 2);\Q\big)$ equals the $\Q$-span of $B_{n,0}\bigcup_{1\le m\le n-2}(A_{n,m}\cup B_{n,m})$ in $H_{1}\big(\text{cell}(n, 2);\Q\big)$, so they must have the same cardinality. By Theorem \ref{basistheorem} $B_{n,0}\bigcup_{1\le m\le n-2}(A_{n,m}\cup B_{n,m})$ is a basis for $H_{1}\big(\text{cell}(n,2);\Q\big)$, so $B_{n,0}\bigcup_{1\le m\le n-2}(A'_{n,m}\cup B_{n,m})=A'_{n}\cup B_{n}$ is a basis for $H_{1}\big(\text{cell}(n,2);\Q\big)$.
\end{proof}

\begin{note}
The elements of the basis for $H_{1}\big(\text{cell}(n,2);\Q\big)$ of Theorem \ref{basistheorem} can be realized by embedded circles. Unfortunately, this basis is not well behaved with respect to the symmetric group action. With Theorem \ref{betterbasisforH1} and Corollary \ref{isabasisforH1} we have found a new basis on which the symmetric group action is far simpler, though this comes at the cost of topological complexity. Our new basis retains the simplest $1$-cycles from \ref{basistheorem}, i.e., $z(1\,2)$, but has $1$-cycles arising from the movement of arbitrarily many disks, so that the elements in our new basis must be realized as rational sums of arbitrarily many embedded circles.
\end{note}

\begin{thm}\label{repstructofH1}
As an $S_{n}$-representation $H_{1}\big(\text{cell}(n,2);\Q\big)$ is isomorphic to 
\[
M^{\text{FI}_{2}}(Triv_{2})_{n}\bigoplus_{1\le m\le n-2}M^{\text{FI}}\Big(\bigwedge\nolimits^{2}Std_{m+2}\Big)_{n}.
\]
\end{thm}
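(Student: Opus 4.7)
The plan is to assemble Theorem \ref{repstructofH1} directly from the preceding structural results: Corollary \ref{isabasisforH1} gives a vector space basis $A'_n \cup B_n$ of $H_1\bigl(\text{cell}(n,2);\Q\bigr)$, and the remaining work is to identify the $\Q$-span of each piece as an $S_n$-representation with the appropriate free FI$_d$-module in degree $n$.

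First I would handle the $B_n$ piece. By Proposition \ref{BnisabasisforthefreeFI2mod}, the $\Q$-span of $B_n$ is the $n$-th degree term of a free FI$_2$-submodule of $H_1\bigl(\text{cell}(\bullet,2);\Q\bigr)$ isomorphic to $M^{\text{FI}_2}(Triv_2)$. In particular, the $S_n$-action on the span of $B_n$ is the same as the $S_n$-action on $M^{\text{FI}_2}(Triv_2)_n$, since the FI$_2$-structure restricts to (and determines) the symmetric group action. Thus, as $S_n$-representations, the $\Q$-span of $B_n$ is isomorphic to $M^{\text{FI}_2}(Triv_2)_n$.

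Next I would handle each $A'_{n,m}$. By property $(3)$ of Theorem \ref{betterbasisforH1}, the $\Q$-span of $A'_{n,m}$ is the $n$-th degree term of a free FI$_1$-submodule generated by the $\Q$-span of $A'_{m+2,m}$ together with $[\iota_1]$, and by property $(4)$ the generating space is isomorphic as an $S_{m+2}$-representation to $\bigwedge^2 Std_{m+2}$. Since a free FI$_1$-module on a single $S_{m+2}$-representation $U_{m+2}$ is by definition $M^{\text{FI}}(U_{m+2})$, freeness in conjunction with property $(4)$ gives an $S_n$-equivariant isomorphism between the $\Q$-span of $A'_{n,m}$ and $M^{\text{FI}}\bigl(\bigwedge^2 Std_{m+2}\bigr)_n$ for each $1 \le m \le n-2$.

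Finally I would combine these identifications. Corollary \ref{isabasisforH1} tells us that $A'_n \cup B_n = B_n \cup \bigcup_{m=1}^{n-2} A'_{n,m}$ is a basis for $H_1\bigl(\text{cell}(n,2);\Q\bigr)$, so as a vector space
\[
H_1\bigl(\text{cell}(n,2);\Q\bigr) = \text{span}_{\Q}(B_n) \oplus \bigoplus_{1 \le m \le n-2} \text{span}_{\Q}(A'_{n,m}).
\]
Each summand on the right is $S_n$-stable by Proposition \ref{BnisSnhomologygood} and Theorem \ref{betterbasisforH1} respectively, so this is a decomposition of $S_n$-representations, and substituting the isomorphisms above yields the stated formula. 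The only subtlety, and the step most likely to need care, is verifying that the sum is actually direct as $S_n$-representations rather than just as vector spaces; but this follows immediately because linear independence of the basis $A'_n \cup B_n$ already forces the spans to meet trivially, and direct sum as vector spaces of $S_n$-stable subspaces is automatically direct sum as $S_n$-representations in the semisimple setting over $\Q$.
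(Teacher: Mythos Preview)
Your proof is correct and follows essentially the same approach as the paper's own proof: invoke Proposition \ref{BnisabasisforthefreeFI2mod} for the $B_n$ piece, properties (3) and (4) of Theorem \ref{betterbasisforH1} for each $A'_{n,m}$, and Corollary \ref{isabasisforH1} to assemble them. Your version is more explicit about why the decomposition is one of $S_n$-representations, which the paper leaves implicit, but the argument is otherwise identical.
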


\begin{proof}
Fix a set of $A'_{n,m}$ satisfying the properties of Theorem \ref{betterbasisforH1}. By Prop \ref{BnisabasisforthefreeFI2mod} the $\Q$-span of $B_{n}$ is isomorphic to $M^{\text{FI}_{2}}(Triv_{2})_{n}$ as an $S_{n}$-representation. By Theorem \ref{betterbasisforH1} the $\Q$-span of $A'_{n,m}$ is isomorphic to $M^{\text{FI}}(\bigwedge^{2}Std_{m+2})_{n}$ as an $S_{n}$-representation. By Corollary \ref{isabasisforH1} $A'_{n}\cup B_{n}$ is a basis for $H_{1}(\text{cell}(n,2);\Q)$, so as an $S_{n}$-representation $H_{1}\big(\text{cell}(n,2);\Q\big)$ is isomorphic to
\[
M^{\text{FI}_{2}}(Triv_{2})_{n}\bigoplus_{1\le m\le n-2}M^{\text{FI}}\Big(\bigwedge\nolimits^{2}Std_{m+2}\Big)_{n}.
\]
\end{proof}

Thus, we have found a basis that allows us to determine $S_{n}$-representation structure $H_{1}\big(\text{cell}(n,2);\Q\big)$. We will use these basis elements to construct free FI$_{d}$-submodules of $H_{k}\big(\text{cell}(n,2);\Q\big)$. Before we do that, we return to idea of leading value as it will be useful in the following section.

The leading value $L$ gives an ordering on the sets $A_{m,n}$ and $B_{n,m}$, and we can think of this ordering as saying that the cycles in $A_{n,m}$ (and as we will soon see $A'_{n,m}$) are more complex than cycles in $A_{n,m'}$ (respectively, $A'_{n,m'}$) if $m>m'$. In the case of $A'_{n,m}$ we can think of this complexity as needing $m+2$ moving disks to create a cycle.

\begin{prop}\label{Lfunction}
For every $z(e)\in A_{n,m}$ there is a unique linear combination of $z'(e_{i})\in A'_{n,m}$ such that
\[
\sum a_{i}z'(e_{i})=z(e)+\sum a_{e'}z'(e')
\]
where
\[
L\Big(\sum a_{e'}z'(e')\Big)<2m.
\]
\end{prop}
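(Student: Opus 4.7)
The plan is to analyze both bases through a leading-value filtration on $H_1\big(\text{cell}(n,2);\Q\big)$. For an integer $s$, let $V_{\le s}$ denote the $\Q$-span of those basis elements of Theorem~\ref{basistheorem} whose leading value is at most $s$, and define $V_{<s}$ analogously. By property~(2) of Theorem~\ref{betterbasisforH1}, $V_{\le 2m}$ also equals the $\Q$-span of $B_{n,0}\cup\bigcup_{1\le l\le m}(A'_{n,l}\cup B_{n,l})$, so the filtration is compatible with the new basis $A'_n\cup B_n$ as well (with the analogous agreement for $V_{<2m}$ after omitting $A'_{n,m}$).

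Next, I would pass to the quotient $V_{\le 2m}/V_{<2m}$. The images of the elements of $A_{n,m}$ form a basis for this quotient by the definition of $L$ together with Theorem~\ref{basistheorem}, so the dimension of the quotient is $|A_{n,m}|$. The images of $A'_{n,m}$ also span the quotient, since $A'_{n,m}$ together with the new-basis elements of strictly smaller leading value spans $V_{\le 2m}$. Because $|A'_{n,m}|=|A_{n,m}|$ by property~(1) of Theorem~\ref{betterbasisforH1}, these spanning images must actually be linearly independent, and hence form a second basis of $V_{\le 2m}/V_{<2m}$. The transition matrix between the two bases is therefore invertible.

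Given $z(e)\in A_{n,m}$, its image in $V_{\le 2m}/V_{<2m}$ is then uniquely a $\Q$-linear combination of the images of elements of $A'_{n,m}$. Lifting back to $V_{\le 2m}$ produces a unique linear combination $\sum a_i z'(e_i)$ with $z'(e_i)\in A'_{n,m}$ such that $\sum a_i z'(e_i)-z(e)\in V_{<2m}$; since $V_{<2m}$ is spanned by new-basis elements of leading value strictly less than $2m$, this remainder can be written in the form $\sum a_{e'} z'(e')$ with $L\big(\sum a_{e'} z'(e')\big)<2m$, as claimed. The only mildly subtle point is justifying that $A'_{n,m}$ projects to a spanning set of $V_{\le 2m}/V_{<2m}$: this reduces to checking that every element of $A'_{n,l}$ with $l<m$ and every element of $B_{n,l}$ with $l\le m$ has leading value strictly less than $2m$, which is immediate from the definition of $L$ and the respective index ranges. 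Uniqueness of the coefficients $a_i$ is automatic from the linear independence of the images of $A'_{n,m}$ in the quotient.
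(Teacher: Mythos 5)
Your proposal is correct and follows essentially the same route as the paper: both rest on property (2) of Theorem \ref{betterbasisforH1} (equality of the spans of $B_{n,0}\bigcup_{1\le l\le m}(A'_{n,l}\cup B_{n,l})$ and $B_{n,0}\bigcup_{1\le l\le m}(A_{n,l}\cup B_{n,l})$), the linear independence from Theorem \ref{basistheorem}, and the cardinality count $|A'_{n,m}|=|A_{n,m}|$ from property (1), with the observation that every basis element outside $A_{n,m}$ in that union has leading value strictly less than $2m$. Your filtration-and-quotient phrasing is just a repackaging of the paper's direct change-of-basis argument.
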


\begin{proof}
By Theorem \ref{betterbasisforH1} the $\Q$-span of $B_{n,0}\bigcup_{1\le l\le m}(A'_{n, l}\cup B_{n,l})$ in $H_{1}\big(\text{cell}(n,2);\Q\big)$ is equal to the $\Q$-span of $B_{n,0}\bigcup_{1\le l\le m}(A_{n, l}\cup B_{n,l})$ in $H_{1}\big(\text{cell}(n,2);\Q\big)$. By Theorem \ref{basistheorem} the elements of these sets are linearly independent. Thus, we can write every element of $A_{n,m}$ as a unique linear combination of elements in $A'_{n,m}$ and elements in $B_{n,m}\cup B_{n,0}\bigcup_{1\le l\le m-1}(A'_{n, l}\cup B_{n,l})$. Therefore, 
\[
\sum a_{i}z'(e_{i})=z(e)+\sum a_{e'}z'(e')
\]
where $z'(e')\in B_{n,m}\cup B_{n,0}\bigcup_{1\le l\le m-1}(A'_{n, l}\cup B_{n,l})$. We have that $L\big(z'(e)\big)<2m$ for all $z'(e')$ as Theorem \ref{betterbasisforH1} proves that the $\Q$-span of $B_{n,m}\cup B_{n,0}\bigcup_{1\le l\le m-1}(A'_{n, l}\cup B_{n,l})$ in $H_{1}\big(\text{cell}(n,2);\Q\big)$  is equal to the $\Q$-span of $B_{n,m}\cup B_{n,0}\bigcup_{1\le l\le m-1}(A_{n, l}\cup B_{n,l})$ in $H_{1}\big(\text{cell}(n,2);\Q\big)$, and elements in this sub-basis have leading value less than $2m$.
\end{proof}

\begin{prop}\label{Lfunction2}
For any nontrivial linear combination $\sum a_{i}z'(e_{i})$ of elements $z'(e_{i})\in A'_{n,m}$, 
\[
L\Big(\sum a_{i}z'(e_{i})\Big)=2m.
\]
\end{prop}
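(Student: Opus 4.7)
The plan is to exploit two results already in hand: property 2 of Theorem \ref{betterbasisforH1}, which equates the $\Q$-span of the first $m$ ``levels'' of $A'$ with the first $m$ levels of Alpert's $A$, and Corollary \ref{isabasisforH1}, which identifies $A'_n \cup B_n$ as a basis of $H_1(\text{cell}(n,2);\Q)$. With these in place the statement becomes a bookkeeping exercise on the Alpert-basis expansion of $\sum a_i z'(e_i)$.

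First I would establish the upper bound $L\big(\sum a_i z'(e_i)\big) \le 2m$. By property 2 of Theorem \ref{betterbasisforH1}, each $z'(e_i) \in A'_{n,m}$ lies in the $\Q$-span of $B_{n,0} \cup \bigcup_{1 \le l \le m}(A_{n,l} \cup B_{n,l})$, and every element of this Alpert-basis subset has leading value at most $2m$. Consequently the Alpert-basis expansion of any linear combination of the $z'(e_i)$ uses only terms of leading value $\le 2m$, and the bound passes to the combination.

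The main step is the reverse inequality. Suppose, for contradiction, that $\sum a_i z'(e_i)$ is nontrivial (hence nonzero, by property 1 of Theorem \ref{betterbasisforH1}) but $L\big(\sum a_i z'(e_i)\big) < 2m$. Then no element of $A_{n,m}$ appears in its Alpert expansion, so $\sum a_i z'(e_i)$ lies in the $\Q$-span of $B_{n,m} \cup B_{n,0} \cup \bigcup_{1 \le l \le m-1}(A_{n,l} \cup B_{n,l})$, which by a further application of property 2 of Theorem \ref{betterbasisforH1} equals the span of $B_{n,m} \cup B_{n,0} \cup \bigcup_{1 \le l \le m-1}(A'_{n,l} \cup B_{n,l})$. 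However, by Corollary \ref{isabasisforH1} the set $A'_n \cup B_n$ is a basis for $H_1(\text{cell}(n,2);\Q)$, so $A'_{n,m}$ is linearly independent from the remaining basis elements; this contradicts the membership just obtained. Hence $L\big(\sum a_i z'(e_i)\big) = 2m$. There is no real obstacle here beyond correctly invoking the complementarity of spans built into Theorem \ref{betterbasisforH1} and its corollary; the content of the proposition is essentially that the basis change from $A_{n,m}$ to $A'_{n,m}$ is strictly upper-triangular with respect to the filtration by leading value.
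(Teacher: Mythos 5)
Your argument is correct, and it reaches the conclusion by a slightly different route than the paper. Both proofs establish the upper bound $L\le 2m$ from property 2 of Theorem~\ref{betterbasisforH1} and then derive a contradiction from the assumption $L<2m$, but the key input differs. The paper leans on Proposition~\ref{Lfunction}: it notes that every $z(e)\in A_{n,m}$ already occurs as the unique leading term of some combination of elements of $A'_{n,m}$, so a further combination with strictly smaller leading value would force the span of $A'_{n,m}$ to exceed $|A_{n,m}|=|A'_{n,m}|$ in dimension, contradicting property 1. You instead invoke Corollary~\ref{isabasisforH1}: if $L<2m$ then, since the only Alpert basis elements of leading value exactly $2m$ among those allowed by the upper bound are the elements of $A_{n,m}$, the combination lies in the span of $B_{n,m}\cup B_{n,0}\bigcup_{1\le l\le m-1}(A_{n,l}\cup B_{n,l})$, which property 2 identifies with the span of the corresponding primed sets; a nonzero vector cannot simultaneously be a nontrivial combination of the basis elements $A'_{n,m}$ and lie in the span of the complementary basis elements. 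Your version is arguably cleaner in that it uses only the basis property and the filtration-preserving span equalities, whereas the paper's version makes the ``upper-triangular change of basis'' picture explicit via Proposition~\ref{Lfunction}; both rest entirely on Theorem~\ref{betterbasisforH1}, and neither introduces a circularity, since Corollary~\ref{isabasisforH1} is established independently of this proposition.
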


\begin{proof}
Let $\sum a_{i}z'(e_{i})$ be some linear combination of the elements of $A'_{n,m}$. Assume $L\big(\sum a_{i}z'(e_{i})\big)\neq 2m$. By Theorem \ref{betterbasisforH1}, $L\big(\sum a_{e_{i}}z(e_{i})\big)<2m$. Let $z(e')$ be one of the greatest terms in terms of the $L$-value of $\sum a_{i}z'(e_{i})$. By Corollary \ref{Lfunction}, for every element $z(e_{i})\in A_{n,m}$ there is some linear combination of elements in $A'_{n,m}$ such that $z(e_{i})$ is the unique greatest term. Note the $z(e_{i})$ and $z(e')$ are linearly independent. Thus, $A'_{n,m}$ spans a space of dimension strictly greater than the span of $A_{n,m}$. This contradicts the fact that $A'_{n,m}$ satisfies property $1$ of Theorem \ref{betterbasisforH1}. 
\end{proof}

We extend the standard ordering on $\Z$ to an ordering to $\Z^{n}$ by setting $(a_{1},\dots, a_{n})\succ (b_{1},\dots, b_{n})$ if at the first entry $j$ where $(a_{1},\dots, a_{n})$ and $(b_{1},\dots, b_{n})$ differ $a_{i}> b_{i}$, i.e., we use the lexicographic ordering on $\Z^{n}$. We will use this ordering when we take concatenation products of our basis elements for $H_{1}\big(\text{cell}(n,2);\Q\big)$, and this is the ordering that we consider for the rest of the paper.

In the next section we will use our new basis for $H_{1}\big(\text{cell}(n,2);\Q\big)$ to get free FI$_{d}$-submodules of $H_{k}\big(\text{cell}(n,2);\Q\big)$.

\section{Free FI$_{d}$-submodules of $H_{k}\big(\text{cell}(n,2);\Q\big)$} \label{newHnbasis} 

In this section we find free FI$_{d}$-modules generated by concatenation products of our basis elements for $H_{1}\big(\text{cell}(n,2);\Q\big)$ obtained in Theorem \ref{betterbasisforH1} and certain high-insertion maps in $H_{k}\big(\text{cell}(\bullet,2);\Q\big)$. We calculate the symmetric group representation structure of these subspaces. In the next section we will show that the direct sum of these subspaces is $H_{k}\big(\text{cell}(\bullet,2);\Q\big)$; thus, we will be able to write $H_{k}\big(\text{cell}(n,2);\Q\big)$ as a sum of induced $S_{n}$-representations.

From now on fix a family of sets $A'_{n,m}$ satisfying the properties of Theorem \ref{betterbasisforH1}, such that, for all $n,m$ the elements of $A'_{m,n}$ are indexed in the same way as the elements of $A_{m,n}$.

Given a set $C$ of the form $A_{n,m}$, $A'_{n,m}$, or $B_{n,m}$ set $D(C):=n$. This is the number of labels needed in the symbol that represents an element of $C$. By the correspondence between $\text{cell}(n,w)$ and $\text{Conf}(n,w)$ given in Theorem \ref{confiscell} one can think of $D(C)$ as the number of disks that appear in a cycle representing an element of $C$.

\begin{defn}
An injected cycle arising from $A_{n,m}$, $A'_{n,m}$, or $B_{n,m}$ and an order preserving map $[n]\to S$ where $|S|=n$ is said to be \emph{properly ordered}.
\end{defn}


\begin{exam}
Note that $z'(1|3\,2)$ is a cycle in $A'_{3,1}$, and $z'(2|1|4\,3)$ is a cycle in $A'_{4,2}$, and consider the element of $H_{2}\big(\text{cell}(7,2);\Q\big)$, $z'(1|5\,3|4|2|7\,6)=z'(1|5\,3)|z'(4|2|7\,6)$. Then $z'(1|5\,3)$ is a properly ordered injected cycle arising from $z'(1|3\,2)$ and the order preserving map from $\{1,2,3\}$ into $\{1, 3, 5\}$, and $z'(4|2|7\,6)$ is a properly ordered injected cycle arising from $z'(2|1|4\,3)$ and the order preserving map from $\{1,2,3,4\}$ to $\{2,4,6,7\}$.
\end{exam}

\begin{prop}
Every element $z(e)$ of the basis for $H_{k}\big(\text{cell}(n,2);\Q\big)$ described in Theorem \ref{basistheorem} can be uniquely decomposed into a product of properly ordered injected cycles, i.e., $z(e)=z(e_{1}|\cdots|e_{k})=z(e_{1})|\cdots|z(e_{k})$ such that for $j<k$, the $1$-cycle $z(e_{j})$ arises from a cycle in $A_{m+2,m}$ or $B_{m+2,m}$ for some $n$, and the $1$-cycle $z(e_{k})$ arises from a cycle in $A_{p,m}$ or $B_{p,m}$ for some $p,m$.
\end{prop}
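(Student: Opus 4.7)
The plan is to construct the decomposition explicitly by reading the critical cell $e$ from left to right. Since every block of $e$ has size at most $2$ and exactly $k$ blocks have size $2$, I label these size-$2$ blocks in order of appearance as $B_1, \ldots, B_k$, and define $e_j$ to be the substring of $e$ consisting of $B_j$ together with all singletons of $e$ lying strictly between $B_{j-1}$ and $B_j$ (where $B_0$ denotes the start of $e$), and in the case $j = k$ additionally all singletons of $e$ lying to the right of $B_k$. This assignment clearly partitions the blocks of $e$, so $e = e_1 \,|\, \cdots \,|\, e_k$ and therefore $z(e) = z(e_1) \,|\, \cdots \,|\, z(e_k)$ by the definition of $z$ on critical cells as a concatenation product of injected cycles given at the end of Section 2.

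The next step is to verify that each $e_j$ defines a properly ordered injected cycle from the correct set. By condition (3) of Theorem \ref{basistheorem}, consecutive singletons in $e$ are in decreasing order, so the leading singletons of each $e_j$ (and, for $j=k$, the trailing singletons of $e_k$) are in decreasing order as demanded by the definitions of $A_{n,m}$ and $B_{n,m}$. If $B_j$ is an ascending $2$-block $a\,b$ with $a<b$, then $e_j$ matches the shape of a symbol in $B_{m_j+2,\,m_j}$ (or $B_{p_j,\,m_j}$ when $j=k$), where $m_j$ is the number of its leading singletons. If instead $B_j$ is descending ($a>b$), then condition (4) forces it to be a follower; hence $m_j\ge 1$ and the singleton $c$ immediately preceding $B_j$ satisfies $c<b<a$. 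Because the remaining leading singletons are all larger than $c$ by the decreasing order, this is precisely the condition $i_{m+1}>i_{m+2}>i_m$ demanded by $A_{m_j+2,\,m_j}$ (or $A_{p_j,\,m_j}$ when $j=k$). Thus each $e_j$ lies in the appropriate set, and the corresponding order-preserving injection from $[|e_j|]$ into the labels of $e_j$ exhibits it as a properly ordered injected cycle.

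The remaining, and most delicate, step is uniqueness. The key observation is that every element of $A_{n,m}$ or $B_{n,m}$ is a $1$-cycle whose defining symbol contains exactly one size-$2$ block, so any decomposition into $k$ factors of the required form must place the size-$2$ blocks $B_1,\ldots,B_k$ of $e$ into distinct $e_j$'s in their left-to-right order. The only freedom is how to distribute the intervening singletons among adjacent groups. However, the stipulation that $e_j$ for $j<k$ arise from $A_{m+2,m}$ or $B_{m+2,m}$ — which, by definition, admit no trailing singletons — forces every singleton lying between successive size-$2$ blocks to be attached to the \emph{next} group as a leading singleton rather than to the previous one as a trailing singleton; singletons before $B_1$ must therefore lie in $e_1$, and singletons after $B_k$ must lie in $e_k$. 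This recovers exactly the partition constructed above, yielding uniqueness. I expect this uniqueness argument to be the main conceptual point; the construction itself and the verification of the $A$- and $B$-type shape conditions are straightforward bookkeeping from the definitions.
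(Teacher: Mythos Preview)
Your proof is correct and follows the same approach as the paper's own argument: cut $e$ immediately after each size-$2$ block (except the last), so that each $e_j$ for $j<k$ has its size-$2$ block on the far right and $e_k$ absorbs any trailing singletons. The paper's proof is a one-sentence version of exactly this idea; your write-up is considerably more detailed, explicitly verifying the $A$/$B$ shape conditions via criteria (3) and (4) of Theorem~\ref{basistheorem} and spelling out why the no-trailing-singleton shape of $A_{m+2,m}$ and $B_{m+2,m}$ forces uniqueness.
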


\begin{proof}
This follows immediately from the fact that there is a unique way to decompose $e$ into $e=e_{1}|\cdots|e_{k}$ such that for $j<k$, $e_{j}$ has exactly one block of size $2$ that appears at the far right of the concatenation product, and the concatenation rules force $z(e_{j})$ to be a properly ordered injected cycle arising from a cycle in $A_{m+2,m}$ or $B_{m+2,m}$ for some $m$, and this forces the labeling of the resulting cell $e_{k}$ to the representative cell of a properly ordered injected cycle $z(e_{k})$ arising from a cycle in $A_{p,m}$ or $B_{p,m}$ for some $p,m$.
\end{proof}

\begin{defn}
Let $z(e)=z(e_{1})|\cdots|z(e_{k})$ be an element of basis for $H_{k}\big(\text{cell}(n,2);\Q\big)$ described in Theorem \ref{basistheorem} such that for $j<k$ the $1$-cycle $z(e_{j})$ is a properly ordered injected cycle arising from a cycle in $A_{m+2, m}$ or $B_{m+2, m}$ for some $m$, and $z(e_{k})$ is a properly ordered injected cycle arising from a cycle in $A_{p,m}$ or $B_{p,m}$ for some $p, m$. We say that the \emph{signature} of $z(e)$, denoted $L(z(e))$, is the $k$-tuple of the leading values of the $z(e_{j})$, i.e., $L(z(e)):=(L(z(e_{1})),\dots, L(z(e_{k})))$. 
\end{defn}

\begin{defn}
A \emph{simple barrier} is an properly ordered injected cycle arising from $z(1\,2)$, i.e., the element of $B_{2}$.
\end{defn}

Now we show that the span of certain sets of cycles in $H_{k}\big(\text{cell}(n,2);\Q\big)$ consisting of concatenation products specific families of properly ordered injected cycles form free FI$_{d}$-modules. In the theorem below we write $z'(e)$ for $z(e)$ when $z(e)$ is a properly ordered injected $1$-cycle arising from an element of $B_{n}$ for some $n$, e.g. $z'(1\,2)=z(1\,2)$.

\begin{thm}\label{buildanFId}
Let $k$ be a nonnegative integer, and for $1\le j\le k$ let $C'_{j}$ be a set of $1$-cycles of the form $A'_{m+2,m}$ or $B_{2}$. Let $n=\sum_{j=1}^{k}D(C'_{j})$. Consider the set $C'$ of all cycles in $H_{k}\big(\text{cell}(n,2);\Q\big)$ of the form $z'(e_{1})|\cdots|z'(e_{k})$, where $z'(e_{j})$ is a properly ordered injected cycle arising from an element of $C'_{j}$, such that all $n$ labels are used. Then, the $\Q$-span of $C'$ is isomorphic to the $S_{n}$-representation of the form
\[
\text{Ind}^{S_{n}}_{S_{D(C'_{1})}\times \cdots \times S_{D(C'_{k})}}V_{C'_{k}}\boxtimes\cdots \boxtimes V_{C'_{k}},
\]
where $V_{C'_{j}}=\bigwedge^{2}Std_{m+2}$ if $C'_{j}=A_{m+2,m}$, and $V_{C'_{j}}=Triv_{2}$ if $C'_{j}=B_{2}$.

Moreover, if exactly $d$ of the sets $C'_{j}$ are form $B_{2}$, and we only allow high-insertion to the immediate right of the $1$-cycle that precedes the cycle coming from such a $C'_{j}$, at the far left if $C'_{1}$ is of the form $B_{2}$, as well as high-insertion to the immediate right of the $1$-cycles coming from $C'_{k}$, then $C'$ generates a free FI$_{d+1}$-submodule of $H_{k}\big(\text{cell}(\bullet, 2);\Q\big)$ isomorphic to the free FI$_{d+1}$-module
\[
M^{\text{FI}_{d+1}}\big(\text{Ind}^{S_{n}}_{S_{D(C_{1})}\times \cdots \times S_{D(C_{k})}}V_{C_{k}}\boxtimes\cdots \boxtimes V_{C_{k}}\big).
\]
\end{thm}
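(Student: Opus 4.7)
The plan is to prove the two assertions of the theorem separately, first establishing the isomorphism with the induced representation and then bootstrapping to the free FI$_{d+1}$-module statement. For the first assertion, I would construct an explicit $S_n$-equivariant map $\phi:\text{Ind}^{S_n}_{S_{D(C'_1)}\times\cdots\times S_{D(C'_k)}} V_{C'_1}\boxtimes\cdots\boxtimes V_{C'_k}\to H_k\big(\text{cell}(n,2);\Q\big)$ as follows. For the canonical placement where the $j$-th factor uses the labels $\{D(C'_1)+\cdots+D(C'_{j-1})+1,\ldots,D(C'_1)+\cdots+D(C'_j)\}$, send an element $v_1\otimes\cdots\otimes v_k$ of $V_{C'_1}\boxtimes\cdots\boxtimes V_{C'_k}$ to the concatenation product of the corresponding properly ordered injected cycles. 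This assignment is equivariant under the Young subgroup $S_{D(C'_1)}\times\cdots\times S_{D(C'_k)}$ because each $V_{C'_j}$ is the $S_{D(C'_j)}$-representation generated by $C'_j$ via Theorem \ref{betterbasisforH1} or Proposition \ref{BnisabasisforthefreeFI2mod}, and thus extends by Frobenius reciprocity to an $S_n$-equivariant map whose image lies in the $\Q$-span of $C'$.

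To show $\phi$ is an isomorphism onto the $\Q$-span of $C'$, I would verify surjectivity by noting that every $z'(e_1)|\cdots|z'(e_k)\in C'$ arises from some ordered partition of $[n]$ into sets of sizes $D(C'_j)$ and is hence the image of an appropriate coset representative. For injectivity I would use a leading-term argument based on the lexicographic ordering of signatures from Propositions \ref{Lfunction} and \ref{Lfunction2}: each element of $C'$ expands in the basis of Theorem \ref{basistheorem} with a unique leading term, and distinct elements of $C'$ produce distinct leading signatures, so a nontrivial linear dependence among elements of $C'$ is impossible. A dimension count confirms the isomorphism: $|C'|=\binom{n}{D(C'_1),\ldots,D(C'_k)}\prod_j|C'_j|=\binom{n}{D(C'_1),\ldots,D(C'_k)}\prod_j\dim V_{C'_j}$, which equals the dimension of the induced representation.

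For the second assertion I would apply the FI$_{d+1}$-module framework set up earlier in the paper: a generating FB-module together with commuting, unordered high-insertion maps yields an FI$_{d+1}$-submodule. The $d+1$ prescribed insertion positions each produce new singletons while preserving the concatenation-product structure of the original element of $C'$, and commutativity with permutations and the unordered property for these restricted maps are inherited from the corresponding global statements for $H_k\big(\text{cell}(\bullet,2);\Q\big)$. A direct count of the analogous set of concatenation products at degree $N$ then yields $\binom{N}{n}\dim W\cdot(d+1)^{N-n}$, matching $\dim M^{\text{FI}_{d+1}}(W)_N$ via Proposition \ref{buidanFIdfromanFB}. The main obstacle I expect is the linear-independence step in the first part, together with the compatible extension of the signature argument once singletons are inserted: the $S_n$-action can send an element of $C'$ into a sum of critical cells whose expansion in the basis of Theorem \ref{basistheorem} is delicate, and the free-module dimension count in the second part hinges on ruling out unexpected relations among the inserted singletons via repeated application of Lemma \ref{switchsingles}.
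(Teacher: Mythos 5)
Your proposal follows essentially the same route as the paper: realize the $\Q$-span of $C'$ as a subrepresentation of the induced representation, establish the isomorphism by a dimension count together with a leading-term linear-independence argument in the basis of Theorem \ref{basistheorem}, and then obtain freeness of the FI$_{d+1}$-submodule by counting the interleaved concatenation products in each degree and matching the dimension of the free module from Proposition \ref{buidanFIdfromanFB}, with Lemma \ref{switchsingles} handling the inserted singletons. One correction to your injectivity step: all elements of $C'$ have the \emph{same} signature (the paper later defines the signature of a $k$-barrier set for exactly this reason), and a single element of $C'$ need not have a unique leading basis element, so "distinct elements of $C'$ produce distinct leading signatures" is not the right statement; the working argument, which is the paper's, runs in the opposite direction — by Proposition \ref{Lfunction} each critical cell $z(e_{1})|\cdots|z(e_{k})$ with $z(e_{j})\in C_{j}$ appears as the unique top-signature term of some explicit linear combination of elements of $C'$, and since these leading terms are linearly independent, all share the same signature, and cannot be cancelled by lower-signature terms, the span of $C'$ attains the required dimension.
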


Note that our high-insertion maps are well defined, and are the same as the high-insertion maps from \cite{alpert2020generalized}, as, by Theorem \ref{betterbasisforH1}, a properly ordered injected cycle $z'(e_{j})$ arising from some set $C'_{j}$ is a concatenation product linear of a combination of basis elements from Theorem \ref{basistheorem} for $H_{1}\big(\text{cell}(n,2);\Q\big)$, all which all have $1$ barrier. By Lemma \ref{switchsingles} $[\iota_{j}]z'(e_{1})|\cdots|z'(e_{k})$ is homotopic to $z'(e_{1})|\cdots|z'(e_{j})|z(n+1)|z'(e_{j+1})|\cdots|z'(e_{k})$ where $n$ is $\sum_{j=1}^{k}D(C'_{j})$.

Theorem \ref{AlpertsBIGTheorem} proves that $H_{k}\big(\text{cell}(\bullet, 2);\Q\big)$ is a finitely generated FI$_{k+1}$-module; by Proposition \ref{makesmallerFId} we can view it as an FI$_{d}$-module for $d\le k+1$. Thus, it makes sense to speak of an FI$_{d}$-submodule of $H_{k}\big(\text{cell}(\bullet, 2);\Q\big)$.

The idea of this theorem is that we can decompose certain $k$-cycles in $H_{k}\big(\text{cell}(n,2);\Q\big)$ into concatenation products of properly ordered injected $1$-cycles arising from the sets $B_{2}$ or $A'_{m+2,m}$ of Theorem \ref{betterbasisforH1}. See Figure \ref{B2A'31} for an example of decomposition of a $2$-cycle and the allowed high-insertion maps. By restricting which high-insertion maps we allow we get a free FI$_{d+1}$-submodule of $H_{k}\big(\text{cell}(\bullet,2);\Q\big)$. Later in Theorem \ref{wegeteverything} we show that the direct sum of all such FI$_{d}$-submodules is equal to $H_{k}\big(\text{cell}(\bullet,2);\Q\big)$. 

\begin{figure}[h]
\centering
\includegraphics[width = 10 cm]{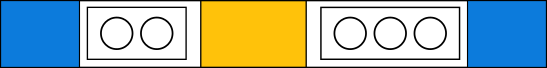}
\caption{The box with $2$ disks in it represents a properly ordered injected cycle arising from the cycle in $B_{2}$, and the box with $3$ disks in it represents a properly ordered injected cycle arising from the cycle in $A'_{3,1}$. The theorem states that if we only allow high-insertion in the blue areas, and not in the yellow, then the result FI$_{2}$-submodule of $H_{2}\big(\text{cell}(\bullet,2);\Q\big)$ is free.}
\label{B2A'31}
\end{figure}

\begin{proof}
By Theorem \ref{betterbasisforH1} the $\Q$-span of $C'_{j}$ forms an $S_{D(C'_{j})}$-representation of the form $V_{C'_{j}}$ when viewed as a subspace of $H_{1}\Big(\text{cell}\big(D(C'_{j}),2\big);\Q\Big)$. Since the labels for $1$-cycles coming from different $C'_{j}$ are distinct by assumption, the $k$ $1$-cycles whose concatenation product is an element of $C'$ come from distinct sets $C'_{j}$, and the $z'(e_{j})$ are properly ordered for all $j$, it follows that as an $S_{n}$-representation the $\Q$-span of $C'$ is a subrepresentation of 
\[
\text{Ind}^{S_{n}}_{S_{D(C'_{1})}\times \cdots \times S_{D(C'_{k})}}V_{C'_{k}}\boxtimes\cdots \boxtimes V_{C'_{k}}.
\]

We count the number of $k$-cycles in $C'$. All the $1$-cycles in $C'_{j}$ use the same number of labels, namely $D(C'_{j})$. Thus, for our choice of injected properly ordered cycle $z'(e_{1})$ there are $\binom{n}{D(C'_{1})}$ choices of the $D(C'_{1})$ labels for $e_{1}$, and once these labels are chosen there are $|C'_{1}|$ properly ordered cycles such that $e_{1}$ is the representative cell of a properly ordered injected cycle $z'(e_{1})$. For $z'(e_{2})$ there are $\binom{n-D(C'_{1})}{D(C'_{2})}$ ways to choose $D(C'_{2})$ labels from the remaining $n-D(C'_{1})$ elements in $[n]$ not being used as labels for $e_{1}$. There are $|C'_{2}|$ ways of ordering these $D(C'_{2})$ elements, such that such that $e_{2}$ is the representative cell of a properly ordered injected cycle $z'(e_{2})$. Continuing in this manner we see that there are 
\[
|C'_{1}|\binom{n}{D(C'_{1})}|C'_{2}|\binom{n-D(C'_{1})}{D(C'_{2})}\cdots |C'_{k}|\binom{n-D(C'_{1})-\cdots-D(C'_{k-1})}{D(C'_{k})}=|C'_{1}|\cdots|C'_{k}|\binom{n}{D(C'_{1}),\dots, D(C'_{k})}
\]
distinct $k$-cycles in $C'$. Note 
\[
\dim\Big(\text{Ind}^{S_{n}}_{S_{D(C'_{1})}\times \cdots \times S_{D(C'_{k})}}V_{C'_{k}}\boxtimes\cdots \boxtimes V_{C'_{k}}\Big)=|C'_{1}|\cdots|C'_{k}|\binom{n}{D(C'_{1}),\dots, D(C'_{k})},
\]
 as $\dim(V_{C'_{j}})=|C'_{j}|$. Thus, if we can show the elements of $C'$ are linearly independent, it follows that $\Q$-span of $C'$ is isomorphic to $\text{Ind}^{S_{n}}_{S_{D(C'_{1})}\times \cdots \times S_{D(C'_{k})}}V_{C'_{k}}\boxtimes\cdots \boxtimes V_{C'_{k}}$ as an $S_{n}$-representation.

We will show that the elements of $C'$ are linearly independent by giving a correspondence with a subset of basis elements for $H_{k}\big(\text{cell}(n,2);\Q\big)$ from Theorem \ref{basistheorem}. If $C'_{j}$ is a set of the form $A'_{m+2,m}$ for some $m$, then let $C_{j}=A_{m+2,m}$. If $C'_{j}$ is of the form $B_{2}$, the let $C_{j}=B_{2}$. Additionally, in this case we set $z(e_{j})=z'(e_{j})$. Given $k$ disjoint subsets $N_{1},\dots, N_{k}$ of $[n]$ such that $|N_{j}|=D(C'_{j})$, consider the $|C'_{1}|\cdots|C'_{k}|$ cycles in $C'$ such that the representative cell $e_{j}$ for $z'(e_{j})$ has labels in $N_{j}$, i.e., the $z'(e_{j})$ are properly ordered injected cycles. By Corollary \ref{Lfunction}, we can write each $z(e_{j})\in C_{j}$ in terms of a linear combination of elements in $C'_{j}$ and cycles $z''$ such that $L\big(z(e_{j})\big)>L(z'')$. Thus, for all $j$ and all $z(e_{j})\in C_{j}$ there is some linear combination of elements of the above form in $C'_{j}$ such that
\[
\sum a_{r_{j}}z'(e_{r_{j}})=z(e_{j})+\sum_{z''|L(z'')<L\big(z(e_{j})\big)}a_{z''}z'',
\]
where the $z'(e_{r_{j}})$ are properly ordered injected cycles in $C'_{j}$ on the labels in $N_{j}$. Thus, given an element $z(e_{1}|\cdots|e_{k})=z(e_{1})|\cdots|z(e_{k})\in H_{k}\big(\text{cell}(n,2);\Q\big)$ of the basis in Theorem \ref{basistheorem} such that $z(e_{j})\in C_{j}$ there is some linear combination of elements in $C'$ such that
\[
\sum a_{r_{1}}\cdots a_{r_{k}}z'(e_{r_{1}})|\cdots|z'(e_{r_{k}})=\sum a_{r_{1}}z'(e_{r_{1}})|\cdots|\sum a_{r_{k}}z'(e_{r_{k}})=z(e_{1})|\cdots|z(e_{k})+\sum a_{z'''}z'''
\]
where the $z'''$ are $k$-cycles such that $L(z''')<L(z(e_{1})|\cdots|z(e_{k}))$.

Note that $z(e_{j})$ arises as a properly ordered injected cycle from $C_{j}$. By the same count that counts the number of elements in $C'$, there are
\[
|C_{1}|\cdots|C_{k}|\binom{n}{D(C_{1}),\dots, D(C_{k})}=|C'_{1}|\cdots|C'_{k}|\binom{n}{D(C'_{1}),\dots, D(C'_{k})},
\]
elements of the form $z(e_{1})|\cdots|z(e_{k})$, where we let $N_{1},\dots, N_{k}$ range over all possible choices for the elements of each $N_{j}$, and $z(e_{j})$ range over properly ordered injected cycles in $C_{j}$ on the labels in $N_{j}$. Moreover, each cycle $z(e_{1})|\cdots|z(e_{k})$ appears as the greatest term, in terms of the ordering coming from $L$, in one such sum of elements of $C'$. Since all such cycles have the same value, in terms of $L$, the smaller terms in the sums cannot cancel out these leading terms. By Theorem \ref{basistheorem} the leading terms are linearly independent. Thus, the $\Q$-span of $C'$ contains a subspace in $H_{k}\big(\text{cell}(n,2);\Q\big)$ of dimension
\[
|C'_{1}|\cdots|C'_{k}|\binom{n}{D(C'_{1}),\dots, D(C'_{k})}.
\]
Therefore, the $\Q$-span of $C'$ is
\[
|C'_{1}|\cdots|C'_{k}|\binom{n}{D(C'_{1}),\dots, D(C'_{k})}
\] 
dimensional, and since it is isomorphic to a subspace of 
\[
\text{Ind}^{S_{n}}_{S_{D(C'_{1})}\times \cdots \times S_{D(C'_{k})}}V_{C'_{k}}\boxtimes\cdots \boxtimes V_{C'_{k}},
\]
of the same dimension, it must be isomorphic to
\[
\text{Ind}^{S_{n}}_{S_{D(C'_{1})}\times \cdots \times S_{D(C'_{k})}}V_{C'_{k}}\boxtimes\cdots \boxtimes V_{C'_{k}},
\]
as an $S_{n}$-representation.

Now we prove that there is a free FI$_{d+1}$-submodule of $H_{k}\big(\text{cell}(\bullet, 2);\Q\big)$, where $d$ is the number the sets $C'_{j}$ of the form $B_{2}$, generated by the $\Q$-span of our set $C'$ and the high-insertion maps to the immediate right of the $1$-cycle preceding the $1$-cycles coming from the $C'_{j}$ of the form $B_{2}$, at the far left if $C'_{1}$ is of the form $B_{2}$, and to the immediate right of the element from $C'_{k}$. 

We claim that there are 
\[
|C'_{1}|\cdots|C'_{k}|\binom{n}{D(C'_{1}),\dots, D(C'_{k})}\binom{p}{n}(d+1)^{p-n}
\]
elements of the form 
\[
z(i_{1})|\cdots|z(i_{n_{1}})|z'(e_{1})|\cdots|z(i_{n_{1}+\cdots+n_{k-1}})|\cdots|z(i_{n_{1}+\cdots+n_{k}})|z'(e_{k})|z(i_{n_{1}+\cdots+n_{k}+1})|\cdots|z(i_{n_{1}+\cdots+n_{k+1}})
\]
where $(n_{1},\dots, n_{k+1})$ is a composition of $p-n$ such that $n_{j}$ is $0$ if $C'_{j}$ is not of the form $B_{2}$, $i_{n_{1}+\cdots+n_{j}+1}>\cdots>i_{n_{1}+\cdots+n_{j}+n_{j+1}}$ for all $j$, and $z'(e_{l})$ is a properly ordered injected cycle arising from a cycle in $C'_{l}$ for all $l$.

To see this, we first show that for a fixed composition $(n_{1},\dots, n_{k+1})$ of $p-n$ such that $n_{j}$ is $0$ if $C'_{j}$ is not of the form $B_{2}$, there are 
\[
|C'_{1}|\cdots|C'_{k}|\frac{p!}{D(C'_{1})!\cdots D(C'_{k})!n_{1}!\cdots n_{k+1}!}
\]
elements of the form
\[
z(i_{1})|\cdots|z(i_{n_{1}})|z'(e_{1})|\cdots|z(i_{n_{1}+\cdots+n_{k-1}})|\cdots|z(i_{n_{1}+\cdots+n_{k}})|z'(e_{k})|z(i_{n_{1}+\cdots+n_{k}+1})|\cdots|z(i_{n_{1}+\cdots+n_{k+1}})
\]
where $i_{n_{1}+\cdots+n_{j}+1}>\cdots>i_{n_{1}+\cdots+n_{j}+n_{j+1}}$ for all $j$, and $z'(e_{l})$ is a properly ordered cycle arising from an element of $C'_{l}$. Then we sum over all such compositions.

Note that there are $\binom{p}{D(C'_{1})+n_{1}}$ ways of choosing $D(C'_{1})+n_{1}$ elements from $[p]$ for the labels that make up $i_{1},\dots, i_{n_{1}}$ and the elements of $e_{1}$. There are $\binom{D(C'_{1})+n_{1}}{D(C'_{1})}|C'_{1}|$ ways of arranging these labels so that the $i_{j}$s are in decreasing order and the symbol $e_{1}$ represents a properly ordered injected cycle arising from an element of $C'_{1}$. There are $\binom{p-D(C'_{1})+n_{1}}{D(C'_{2})+n_{2}}$ ways of choosing $D(C'_{2})+n_{2}$ elements from the remaining $p-D(C'_{1})+n_{1}$ elements of $[p]$, and $\binom{D(C'_{1})+n_{1}}{D(C'_{1})}|C'_{1}|$ ways of arranging these labels so that the $i_{j}$s are in decreasing order and the symbol $e_{2}$ represents a properly ordered injected cycle arising from an element of $C'_{2}$. Repeating this process for the first $k$ $1$-cycles, i.e., the $z'(e_{j})$s, we see that there are
\[
\binom{p}{D(C'_{1})+n_{1}}\binom{D(C'_{1})+n_{1}}{D(C'_{1})}|C'_{1}|\cdots\binom{p-D(C'_{1})-\cdots-D(C'_{d-1})-n_{1}-\cdots-n_{k-1}}{D(C'_{k})+n_{k}}\binom{D(C'_{k})+n_{k}}{D(C'_{k})}|C'_{k}|
\]
ways of choosing our symbols for the 
\[
z(i_{1})|\cdots|z(i_{n_{1}})|z'(e_{1})|\cdots|z(i_{n_{1}+\cdots+n_{k-1}})|\cdots|z(i_{n_{1}+\cdots+n_{k}})|z'(e_{k})
\]
part of our $k$-cycle. Finally, there is $1$ way of putting the remaining $n_{k+1}$ elements in decreasing order. Thus, there are 
\[
\binom{p}{D(C'_{1})+n_{1}}\cdots\binom{p-D(C'_{1})-\cdots-D(C'_{d-1})-n_{1}-\cdots-n_{k-1}}{D(C'_{k})+n_{k}}\binom{D(C'_{1})+n_{1}}{D(C'_{1})}\cdots\binom{D(C'_{k})+n_{k}}{D(C'_{k})}|C'_{1}|\cdots|C'_{k}|
\]
cycles of the form
\[
z(i_{1})|\cdots|z(i_{n_{1}})|z'(e_{1})|\cdots|z(i_{n_{1}+\cdots+n_{k-1}})|\cdots|z(i_{n_{1}+\cdots+n_{k}})|z'(e_{k})|z(i_{n_{1}+\cdots+n_{k}+1})|\cdots|z(i_{n_{1}+\cdots+n_{k+1}})
\]
where $i_{n_{1}+\cdots+n_{j}+1}>\cdots>i_{n_{1}+\cdots+n_{j}+n_{j+1}}$ for all $j$, and $z'(e_{l})$ is a properly ordered cycle arising from an element of $C'_{l}$.

One can check that this is equal to 
\[
|C'_{1}|\cdots|C'_{k}|\frac{p!}{D(C'_{1})!\cdots D(C'_{k})!n_{1}!\cdots n_{k+1}!}.
\]

Note that if $C'_{j}$ not is of the form $B_{2}$ then $n_{j}=0$. Thus, all but $d+1$ of the $n_{j}$ must be $0$, as only the $d$ $n_{j}$ where $C'_{j}$ is of the form $B_{2}$ and $n_{k+1}$ can be nonzero. Let $n_{i_{1}},\dots, n_{i_{k+1}}$ be the $n_{j}$ allowed to be nonzero. Then, for the other $n_{j}$, we have $n_{j}=0$, so $n_{j}!=1$, and 
\[
|C'_{1}|\cdots|C'_{k}|\frac{p!}{D(C'_{1})!\cdots D(C'_{k})!n_{1}!\cdots n_{k+1}!}
=|C'_{1}|\cdots|C'_{k}|\frac{p!}{D(C'_{1})!\cdots D(C'_{k})!n_{i_{1}}!\cdots n_{i_{d+1}}!}.
\]

Additionally, note that there is an obvious bijection between $k+1$-compositions of $p-n$, $(n_{1},\dots, n_{k+1})$, where $n_{j}=0$ if $C'_{j}$ is not of the form $B_{2}$ and $d+1$-compositions of $p-n$, $(n_{i_{1}}, \dots, n_{i_{d+1}})$, by only taking the terms that allowed to be nonzero.

Summing over all $d+1$ compositions $(n_{i_{1}},\dots, n_{i_{d+1}})$ of $p-n$, we see that
\begin{align*}
\sum_{n_{i_{1}}+\cdots+n_{i_{d+1}}=p-n}|C'_{1}|\cdots|C'_{k}|\frac{p!}{D(C'_{1})!\cdots D(C'_{k})!n_{i_{1}}!\cdots n_{i_{d+1}}!}\\
=|C'_{1}|\cdots|C'_{k}|\frac{p!}{D(C'_{1})!\cdots D(C'_{k})!}\sum_{n_{i_{1}}+\cdots+n_{i_{d+1}}}\frac{1}{n_{i_{1}}!\cdots n_{i_{d+1}}!}\\
=|C'_{1}|\cdots|C'_{k}|\frac{p!}{D(C'_{1})!\cdots D(C'_{k})!}\frac{1}{(p-n)!}(d+1)^{p-n}\\
=|C'_{1}|\cdots|C'_{k}|\frac{n!}{D(C'_{1})!\cdots D(C'_{k})!}\binom{p}{n}(d+1)^{p-n}.
\end{align*}
Much of the simplification of this sum is an application of the multinomial theorem.

Thus, there are 
\[
|C'_{1}|\cdots|C'_{k}|\binom{n}{D(C'_{1}),\dots, D(C'_{k})}\binom{p}{n}(d+1)^{p-n}.
\]
$k$-cycles of the form 
\[
z(i_{1})|\cdots|z(i_{n_{1}})|z'(e_{1})|\cdots|z(i_{n_{1}+\cdots+n_{k-1}})|\cdots|z(i_{n_{1}+\cdots+n_{k}})|z'(e_{k})|z(i_{n_{1}+\cdots+n_{k}+1})|\cdots|z(i_{n_{1}+\cdots+n_{k+1}})
\]
where $(n_{1},\dots, n_{k+1})$ is a composition of $p-n$ such that $n_{j}$ is $0$ if $C'_{j}$ is not of the form $B_{2}$, $i_{n_{1}+\cdots+n_{j}+1}>\cdots>i_{n_{1}+\cdots+n_{j}+n_{j+1}}$ for all $j$, and $z'(e_{l})$ is a properly ordered injected cycle arising from a cycle in $C'_{l}$ for all $l$.

Note that the number of $k$-cycles of this form is equal to 
\begin{align*}
\dim\bigg(M^{\text{FI}_{d+1}}\Big(\text{Ind}^{S_{n}}_{S_{D(C'_{1})}\times \cdots \times S_{D(C'_{k})}}V_{C'_{k}}\boxtimes\cdots \boxtimes V_{C'_{k}}\Big)_{p}\bigg)\\=
\dim\Big(\text{Ind}^{S_{n}}_{S_{D(C'_{1})}\times \cdots \times S_{D(C'_{k})}}V_{C'_{k}}\boxtimes\cdots \boxtimes V_{C'_{k}}\Big)\binom{p}{n}(d+1)^{p-n}=
|C'_{1}|\cdots|C'_{k}|\binom{n}{D(C'_{1}), \dots, D(C'_{k})}\binom{p}{n}(d+1)^{p-n}.
\end{align*}
Thus, if we can show that these elements are linearly independent in $H_{k}\big(\text{cell}(p,2);\Q\big)$, and are in our FI$_{d+1}$-submodule, we will have shown that our FI$_{d+1}$-submodule is isomorphic to an FI$_{d+1}$-submodule of the same dimension of a free FI$_{d+1}$-module that contains it, and is hence free.

Next, we show that all the elements of the form
\[
z(i_{1})|\cdots|z(i_{n_{1}})|z'(e_{1})|\cdots|z(i_{n_{1}+\cdots+n_{k-1}})|\cdots|z(i_{n_{1}+\cdots+n_{k}})|z'(e_{k})|z(i_{n_{1}+\cdots+n_{k}+1})|\cdots|z(i_{n_{1}+\cdots+n_{k+1}})
\]
where $(n_{1},\dots, n_{k+1})$ is a composition of $p-n$ such that $n_{j}$ is $0$ if $C'_{j}$ is not of the form $B_{2}$, $i_{n_{1}+\cdots+n_{j}+1}>\cdots>i_{n_{1}+\cdots+n_{j}+n_{j+1}}$ for all $j$, and $z'(e_{l})$ is a properly ordered injected cycle arising from a cycle in $C'_{l}$ for all $l$ are in our FI$_{d+1}$-submodule of $H_{k}\big(\text{cell}(\bullet, 2);\Q\big)$, where we consider the $\Q$-span of $C'$ and only allow high-insertion to the immediate right of the $1$-cycle to the immediate left of the cycle $z'(e_{j})$ if $C'_{j}$ is of the form $B_{2}$, at the far left if $C'_{1}$ is of the form $B_{2}$, and to the immediate right of $z'(e_{k})$. Without loss of generality, we assume that the first $d$ of the $C'_{j}$ are of the form $B_{2}$ and the rest of the $C'_{j}$ are not of this form, as the proof of the general case is similar, but the necessary notation is confusing and unenlightening.

All the elements of the form
\begin{multline*}
z(n+n_{1})|\cdots|z(n+1)|z'(e_{1})|\cdots|z(n+n_{1}+\cdots+n_{d})|\\\dots|z(n+n_{1}+\cdots+n_{d-1}+1)|z'(e_{d})|z'(e'_{d+1})|\cdots|z'(e_{k})|z(p)|\cdots|z(n+n_{1}+\cdots+n_{d}+1),
\end{multline*}
where the labels of the symbol $e_{j}\in [n]$ for all $j$, the sets of labels for $e_{j}$ and $e_{l}$ are disjoint if $j\neq l$, and $z'(e_{j})$ is a properly ordered injected cycle arising from a cycle in $C'_{j}$ for all $j$, are in the FI$_{d+1}$-submodule by definition of the high-insertion maps and $C'$. Given an arbitrary element of the from
\begin{multline*}
z(i_{1})|\cdots|z(i_{n_{1}})|z'(\bar{e}_{1})|\cdots|z(i_{n_{1}+\cdots+n_{d-1}+1})|\cdots|z(i_{n_{1}+\cdots+n_{d-1}+n_{d}})|z'(\bar{e}_{d})|\\
z'(\bar{e}_{d+1})|\cdots|z'(\bar{e}_{k})|z(i_{n_{1}+\cdots+n_{d}+1})|\cdots|z(i_{n_{1}+\cdots+n_{d+1}}),
\end{multline*}
where $i_{n_{1}+\cdots+n_{j}+1}>\cdots>i_{n_{1}+\cdots+n_{j}+n_{j+1}}$ for all $j$, and $z'(\bar{e}_{l})$ is a properly ordered injected cycle arising from a cycle in $C'_{l}$ for all $l$, consider the element of $S_{p}$ that takes $n+n_{1}\mapsto i_{1}, \dots, n+1\mapsto i_{n_{1}}, \dots, n+n_{1}+\cdots+n_{d}+1 \mapsto i_{n+n_{1}+\cdots+n_{d}+1},\dots, n+n_{1}+\cdots+n_{d+1}\mapsto i_{n_{1}+\cdots+n_{d+1}}$, and takes the labels of the symbol of $e_{j}$ to the labels of the symbol $\bar{e}_{j}$, in an order preserving manner for all $j$, e.g., $1|3\,2$ gets sent to $3|6\,4$. This element of $S_{p}$ maps the $k$-cycle we know is in our FI$_{d+1}$-submodule to 
\begin{multline*}
z(i_{1})|\cdots|z(i_{n_{1}})|z'(\bar{e}_{1})|\cdots|z(i_{n_{1}+\cdots+n_{d-1}+1})|\dots|z(i_{n_{1}+\cdots+n_{d}})|z'(\bar{e}_{d})|\\
z'(\bar{e}_{d+1})|\cdots|z'(\bar{e}_{k})|z(i_{n_{1}+\cdots+n_{d+1}})|\cdots|z(i_{n_{1}+\cdots+n_{d}+1}).
\end{multline*}
Since this element arises from the $S_{p}$-action on an element in our FI$_{d+1}$-submodule, it must also be in our FI$_{d+1}$-submodule. Therefore, all elements of the form 
\begin{multline*}
z(i_{1})|\cdots|z(i_{n_{1}})|z'(\bar{e}_{1})|\cdots|z(i_{n_{1}+\cdots+n_{d-1}+1})|\cdots|z(i_{n_{1}+\cdots+n_{d-1}+n_{d}})|z'(\bar{e}_{d})|\\
z'(\bar{e}_{d+1})|\cdots|z'(\bar{e}_{k})|z(i_{n_{1}+\cdots+n_{d}+1})|\cdots|z(i_{n_{1}+\cdots+n_{d+1}}),
\end{multline*}
where $i_{n_{1}+\cdots+n_{j}+1}>\cdots>i_{n_{1}+\cdots+n_{j}+n_{j+1}}$ for all $j$, and $z'(\bar{e}_{l})$ is a properly ordered injected cycle arising from a cycle in $C'_{l}$ for all $l$ are in our FI$_{d+1}$-submodule. If we can show that these elements are linearly independent, then it will follow that our FI$_{d+1}$-submodule is free, as this would prove the dimension of our FI$_{d+1}$-submodule is isomorphic to an FI$_{d+1}$-submodule of a free FI$_{d+1}$-module of the same dimension.

We prove that these elements of our FI$_{d+1}$-submodule of $H_{k}\big(\text{cell}(\bullet, 2);\Q\big)$ are linearly independent by relating them to the basis elements for $H_{k}\big(\text{cell}(p,2);\Q\big)$ from Theorem \ref{basistheorem}. Just like in the proof of Theorem \ref{betterbasisforH1}, let $C_{j}$ be a set of the form $A_{m+2,m}$ if $C'_{j}$ is of the form $A'_{m+2, m}$, and if $C'_{j}$ of the form $B_{2}$, let $C_{j}$ be of the form $B_{2}$. Given a composition of $p-n$ of the form $(n_{1},\dots, n_{d+1})$ let
\begin{multline*}
z(i_{1})|\cdots|z(i_{n_{1}})|z(e_{1})|\cdots|z(i_{n_{1}+\cdots+n_{d-1}+1})|\cdots|z(i_{n_{1}+\cdots+n_{d-1}+n_{d}})|z(e_{d})|\\
z(e_{d+1})|\cdots|z(e_{k})|z(i_{n_{1}+\cdots+n_{d}+1})|\cdots|z(i_{n_{1}+\cdots+n_{d+1}}),
\end{multline*}
be a $k$-cycle in the basis for $H_{k}\big(\text{cell}(p,2);\Q\big)$ from Theorem \ref{basistheorem} where $i_{n_{1}+\cdots+n_{j}+1}>\cdots>i_{n_{1}+\cdots+n_{j}+n_{j+1}}$ for all $j$, and $z(e_{l})$ is a properly ordered injected cycle arising from a cycle in $C_{l}$. This is the element
\[
z(i_{1}|\cdots|i_{n_{1}}|e_{1}|\cdots|i_{n_{1}+\cdots+n_{d-1}+1}|\cdots|i_{n_{1}+\cdots+n_{d-1}+n_{d}}|e_{d}|e_{d+1}|\cdots|e_{k}|i_{n_{1}+\cdots+n_{d}+1}|\cdots|i_{n_{1}+\cdots+n_{d+1}}).
\]
We will show that there is a linear combination of the elements from the previous paragraph whose leading term is this element.

Let $T_{j}$ be the set of labels used in $e_{j}$. By Corollary \ref{Lfunction} there is a linear of properly ordered injected cycles arising from $C'_{j}$ on the elements of $T_{j}$ such that
\[
z(e_{j})+\sum a_{e'}z'(e')=\sum_{\substack{\bar{e}_{j} \text{ such that }z'(\bar{e}_{j})\text{ is a }\\\text{properly ordered injected cycle }\\\text{ arising from }C'_{j}\text{ with labels in }T_{j}}}a_{\bar{e}_{j}}z'(\bar{e}_{j}),
\]
where $L\big(z'(e')\big)<L\big(z(e_{j})\big)$. Thus
\begin{multline*}
z(i_{1})|\cdots|z(i_{n_{1}})|z(e_{1})|\cdots|z(i_{n_{1}+\cdots+n_{d-1}+1})|\cdots|z(i_{n_{1}+\cdots+n_{d-1}+n_{d}})|z(e_{d})|\\
z(e_{d+1})|\cdots|z(e_{k})|z(i_{n_{1}+\cdots+n_{d}+1})|\cdots|z(i_{n_{1}+\cdots+n_{d+1}})+\sum a_{e'''}z'''(e''')
\end{multline*}
is equal to
\begin{multline*}
\sum_{\substack{\bar{e}_{1},\dots, \bar{e}_{k}\text{ such that } z'(\bar{e}_{1}),\dots, z'(\bar{e}_{k})\\\text{ are properly ordered injected cycles }\\\text{arising from } C'_{1},\dots, C'_{k} \text{ with}\\\text{labels in }T_{1},\dots, T_{k}\text{ respectively}}}a_{\bar{e}_{1}}\cdots a_{\bar{e}_{1}}z(i_{1})|\cdots|z(i_{n_{1}})|z'(\bar{e}_{1})|\cdots|z(i_{n_{1}+\cdots+n_{d-1}+1})|\\\cdots|z(i_{n_{1}+\cdots+n_{d-1}+n_{d}})|z'(\bar{e}_{d})|z'(\bar{e}_{d+1})|\cdots|z'(\bar{e}_{k})|z(i_{n_{1}+\cdots+n_{d}+1})|\cdots|z(i_{n_{1}+\cdots+n_{d+1}}),
\end{multline*}
where
\begin{multline*}
L\big(a_{e'''}z'''(e''')\big)<L\big(z(i_{1})|\cdots|z(i_{n_{1}})|z(e_{1})|\cdots|z(i_{n_{1}+\cdots+n_{d-1}+1})|\cdots|z(i_{n_{1}+\cdots+n_{d-1}+n_{d}})|z(e_{d})|\\
z(e_{d+1})|\cdots|z(e_{k})|z(i_{n_{1}+\cdots+n_{d}+1})|\cdots|z(i_{n_{1}+\cdots+n_{d+1}})\big)
\end{multline*}
for all elements $a_{e'''}z'''(e''')$. Thus, for every $k+1$-composition of $p-n$ and every element basis element from Theorem \ref{basistheorem} of the form
\begin{multline*}
z(i_{1})|\cdots|z(i_{n_{1}})|z(e_{1})|\cdots|z(i_{n_{1}+\cdots+n_{d-1}+1})|\cdots|z(i_{n_{1}+\cdots+n_{d-1}+n_{d}})|z(e_{d})|\\
z(e_{d+1})|\cdots|z(e_{k})|z(i_{n_{1}+\cdots+n_{d}+1})|\cdots|z(i_{n_{1}+\cdots+n_{d+1}})
\end{multline*}
there is a linear combination of elements known to be in our FI$_{d+1}$-submodule such that the unique term with the largest signature is this element.

All the elements of the basis for $H_{k}\big(\text{cell}(p,2);\Q\big)$ from Theorem \ref{basistheorem} are linearly independent. Moreover, letting $(n_{1},\dots, n_{d+1})$ vary over all compositions of $p-n$, there are $|C_{1}|\cdots|C_{k}|\frac{n!}{D(C_{1})!\cdots D(C_{k})!}\binom{p}{n}(d+1)^{p-n}$ many elements of the form
\begin{multline*}
z(i_{1})|\cdots|z(i_{n_{1}})|z(e_{1})|\cdots|z(i_{n_{1}+\cdots+n_{d-1}+1})|\cdots|z(i_{n_{1}+\cdots+n_{d-1}+n_{d}})|z(e_{d})|\\
z(e_{d+1})|\cdots|z(e_{k})|z(i_{n_{1}+\cdots+n_{d}+1})|\cdots|z(i_{n_{1}+\cdots+n_{d+1}}),
\end{multline*}
by the same counts as earlier in our proof. Moreover, every the set of linear combinations of the form
\[
\sum_{\substack{\bar{e}_{1},\dots, \bar{e}_{k}\text{ such that } z'(\bar{e}_{1}),\dots, z'(\bar{e}_{k})\\\text{ are properly ordered injected cycles }\\\text{arising from } C'_{1},\dots, C'_{k} \text{ with}\\\text{labels in }T_{1},\dots, T_{k}\text{ respectively}}}a_{\bar{e}_{1}}\cdots a_{\bar{e}_{1}}z(i_{1})|\cdots|z(i_{n_{1}})|z'(\bar{e}_{1})|\cdots|z(i_{n_{1}+\cdots+n_{d-1}+1})|\cdots|z(i_{n_{1}+\cdots+n_{d-1}+n_{d}})|z'(\bar{e}_{d})|
\]
is linearly independent since the leading terms are linearly independent. Since $D(C'_{j})=D(C_{j})$ and $|C'_{j}|=|C_{j}|$ it follows that our basis has a subspace of dimension at least $|C'_{1}|\cdots|C'_{k}|\frac{n!}{D(C'_{1})!\cdots D(C'_{k})!}\binom{p}{n}(d+1)^{p-n}$ in $H_{k}\big(\text{cell}(p, 2);\Q\big)$. As this FI$_{d+1}$-submodule is isomorphic to a submodule of a free FI$_{d+1}$-module of the same dimension for all $p$, our FI$_{d+1}$-submodule of $H_{k}\big(\text{cell}(\bullet, 2);\Q\big)$ must be isomorphic to the entire free FI$_{d+1}$-module and is therefore free.
\end{proof}

Note that $d$ is the number of the sets $C'_{j}$ of the form $B_{2}$, i.e., the number of simple barriers, that constitute an element of $C'$. This follows from the statement of Theorem \ref{buildanFId} as we only allow high-insertion to the immediate right of the $1$-cycles in an element of $C'$ that precedes the cycle coming from a $C'_{l}$ of the form $B_{2}$, at the far left if $C'_{1}$ is of the form $B_{2}$, as well as high-insertion to the immediate right of the $1$-cycle coming from $C'_{k}$. Thus, we allow $d+1$ high-insertions. As every element of $k$-barrier set $C'$ has the same number of simple barriers, we write $S(C')$ to denote the number of simple barriers in any element of $C'$. Similarly, given a $k$-barrier set $C'$, let $D(C')=\sum_{i=1}^{k}D(C'_{i})$ denote the minimum number of labels needed to label an element of $C'$ such no two label are repeated in a symbol.

\begin{defn}
We call sets of the form $C'$ from Theorem \ref{buildanFId} \emph{$k$-barrier sets}, where $k$ is the homological degree of an element in $C'$.
\end{defn}

Let $M^{\text{FI}_{S(C')+1}}(\Q C')$ denote the free FI$_{S(C')+1}$-submodule of $H_{k}\big(\text{cell}(\bullet, 2);\Q\big)$ generated by the $\Q$-span of a $k$-barrier set $C'$ and the allowed high-insertion maps as in Theorem \ref{buildanFId}.

By Theorem \ref{betterbasisforH1} the greatest term in terms of the $L$-value of every element in $C'$ has the same signature. Thus, we can make sense of the signature of a $k$-barrier set.

\begin{defn}
The \emph{signature} of a $k$-barrier set $C'$ is the signature of the greatest element of the basis from Theorem \ref{basistheorem} for $H_{k}\big(\text{cell}(n,2);\Q\big)$ that appears as a summand of an element of $C'$.
\end{defn}

We can extend this definition to arbitrary sums.

\begin{defn}
Given a nontrivial linear combination of $k$-cycles $\sum az$ in the basis of Theorem \ref{basistheorem} in $M^{\text{FI}_{S(C')+1}}(\Q C')_{n}$, we say that the \emph{signature} of $\sum az$ is $\max\big(L(z)\big)$
\end{defn}

\begin{prop}\label{neg1}
Given a $k$-barrier set $C'$, the odd terms of the signature of $C'$ are all $-1$.
\end{prop}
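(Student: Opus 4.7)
The plan is to unpack the signature of $C'$ coordinate-by-coordinate. By Theorem \ref{buildanFId}, every element of $C'$ has the form $z'(e_1)|\cdots|z'(e_k)$ with pairwise disjoint labels, where each $z'(e_j)$ is a properly ordered injected cycle arising from $C'_j$. Expanding each $z'(e_j)$ in the Alpert basis of Theorem \ref{basistheorem} and distributing over the concatenation product, each resulting summand is an Alpert basis element whose signature is the tuple of $L$-values of the chosen factors. Because the ordering on signatures is lexicographic, the largest such summand is obtained by independently picking the summand of maximal $L$-value within each $z'(e_j)$; in particular, the signature of $C'$ equals $\big(L(z'(e_1)),\ldots,L(z'(e_k))\big)$.

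Next I would compute each entry separately. By definition of a $k$-barrier set, each $C'_j$ is either $A'_{m_j+2,m_j}$ for some $m_j\ge 1$, or $B_2=\{z(1\,2)\}$. In the first case, Proposition \ref{Lfunction2} applied to the nontrivial linear combination defining $z'(e_j)$ gives leading value $2m_j$, an even number. In the second case, $z'(e_j)$ is an order-preserving relabeling of $z(1\,2)\in B_{2,0}$, so its leading value is $2\cdot 0-1=-1$, an odd number. Combining the two cases, the odd coordinates of the signature are precisely those indexed by $j$ with $C'_j=B_2$, and each equals $-1$.

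The main thing to verify carefully is the first paragraph, namely that concatenating properly ordered injected $1$-cycles from $A'_{m+2,m}$ and $B_2$ sets on disjoint labels yields an Alpert basis element whose signature is coordinatewise $\big(L(z'(e_1)),\ldots,L(z'(e_k))\big)$. This holds because an $A'_{m+2,m}$-type cycle begins with decreasing singletons and ends with a decreasing $2$-block satisfying the follower condition relative to the preceding singleton, while a $B_2$-type cycle is a single increasing $2$-block with no singletons; consequently the concatenation never introduces adjacent singletons in the wrong order or decreasing $2$-blocks failing to be followers, and the Alpert-basis summands of the product are precisely the concatenations of Alpert-basis summands of the factors, with no cancellation. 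Once this structural observation is in hand, the proposition follows immediately from the parity dichotomy between $A'$-type ($L = 2m_j$, even) and $B_2$-type ($L = -1$, odd) factors.
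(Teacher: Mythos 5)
Your proof is correct and follows essentially the same route as the paper's: identify the signature of $C'$ with the tuple $\big(L(C'_1),\dots,L(C'_k)\big)$, then observe that $L(B_2)=-1$ while $L(A'_{m+2,m})=2m$ is even, so the only odd entries are the $-1$'s coming from the $B_2$ factors. The extra care you take in the first paragraph (that the lexicographically greatest Alpert-basis summand of a concatenation product is the concatenation of the greatest summands of the factors) is something the paper disposes of earlier, in the remark preceding the definition of the signature of a $k$-barrier set, so your argument is just a more self-contained version of the same proof.
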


\begin{proof}
Note that the elements of a $k$-barrier set $C'$ are all the cycles of the form $z'(e_{1})|\cdots|z'(e_{k})$, such that $z'(e_{j})$ is a properly ordered injected cycle arising from a set $C'_{j}$ which is a set of the form $B_{2}$ or $A'_{m+2,m}$ for some $m$, and the labels of these $1$-cycles are distinct. By definition $L(C')=\big(L(C'_{1}),\dots, L(C'_{k})\big)$. Note that $L(B_{2})=-1$, and $L(A'_{m+2, m})=2m$. Since none of the $C'_{j}$ can be of the form $B_{m,n}$ for $m\neq 0$, it follows that none of the terms of the signature of $C'$ can be odd numbers greater than $1$.
\end{proof}

We have proven that there are free FI$_{d+1}$-submodules for $d\le k$ in $H_{k}\big(\text{cell}(\bullet,2);\Q\big)$. In the next section we will show that these free FI$_{d+1}$-modules have trivial intersection, and will we use this to decompose $H_{k}\big(\text{cell}(n,2);\Q\big)$ into a direct sum of induced $S_{n}$-representations.

\section{Decomposing $H_{k}\big(\text{cell}(n,2)\big)$}\label{putting it all together}

In this section we show that for every basis element for $H_{k}\big(\text{cell}(n,2);\Q\big)$ from Theorem \ref{basistheorem} there is a unique $k$-barrier set $C'$ such that this basis element is the largest element of this basis appearing as a summand of some element of some $M^{\text{FI}_{S(C')+1}}(\Q C')_{n}$. We use this to decompose $H_{k}\big(\text{cell}(n,2);\Q\big)$ into a direct sum of free FI$_{d}$-modules. This allows us to prove Theorem \ref{decompositionasSp} decomposing $H_{k}\big(\text{Conf}(n,2);\Q\big)$ into a direct sum of induced representations. We then state some results about the multiplicity of the alternating and trivial representations of $S_{n}$ in $H_{k}\big(\text{Conf}(n,2);\Q\big)$, and use the latter result to determine the structure of the rational homology groups of the unordered configuration spaces of unit diameter disks on the infinite strip of width $2$.

\begin{prop}\label{uniquekbarrier}
Given any element $z(e)$ of the basis for $H_{k}\big(\text{cell}(n,2);\Q\big)$ from Theorem \ref{basistheorem} there is a unique $k$-barrier set $C'$ such that $z(e)$ is the unique element with greatest signature of some element in $M^{\text{FI}_{S(C')+1}}(\Q C')_{n}$. If the signature of $z(e)$ is $(l_{1},\dots, l_{k})$, then the signature of $C'$ is $(l'_{1},\dots, l'_{k})$, where $l'_{j}=l_{j}$ if $l_{j}$ is even, and $l'_{j}=-1$ if $l_{j}$ is an odd. Additionally, $\dim\big(M^{\text{FI}_{S(C')+1}}(\Q C')_{n}\big)$ many such basis elements for $H_{k}\big(\text{cell}(n,2);\Q\big)$ from Theorem \ref{basistheorem} appear as the leading term of an element in $M^{\text{FI}_{S(C')+1}}(\Q C')_{n}$.
\end{prop}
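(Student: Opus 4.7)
The strategy is to construct the $k$-barrier set $C'$ explicitly from the decomposition $z(e) = z(e_1)|\cdots|z(e_k)$ of $z(e)$ given by the preceding proposition, then exhibit a specific element of $M^{\text{FI}_{S(C')+1}}(\Q C')_n$ whose unique leading summand is $z(e)$, verify uniqueness of $C'$, and count the basis elements that arise this way.

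Write $(l_1,\dots,l_k) := L(z(e))$; each $z(e_j)$ is either a properly ordered injected cycle arising from $A_{m_j+2,m_j}$ (in which case $l_j = 2m_j$ is even) or from $B_{p_j,q_j}$ (in which case $l_j = 2q_j-1$ is odd). I would define $C' = (C'_1,\dots,C'_k)$ by $C'_j = A'_{m_j+2,m_j}$ when $l_j$ is even and $C'_j = B_2$ when $l_j$ is odd; Proposition \ref{neg1} then delivers the required signature $(l'_1,\dots,l'_k)$. To produce $\zeta \in M^{\text{FI}_{S(C')+1}}(\Q C')_n$ with $z(e)$ as leading term, for each $j$ with $l_j$ even I would invoke Proposition \ref{Lfunction} to obtain a linear combination $\sigma_j := \sum_i a_i z'(e_{i,j})$ of elements of $A'_{m_j+2,m_j}$ on the labels of $e_j$ satisfying $\sigma_j = z(e_j) + (\text{terms of leading value} < 2m_j)$. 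For each $j$ with $l_j$ odd I would take $\sigma_j$ to be the single cycle in $B_2$ on the labels of the $2$-block of $e_j$, and realize the $q_j$ preceding singletons (together with, for $j = k$, any trailing singletons of $e_k$) via the allowed high-insertion maps at the positions prescribed by Theorem \ref{buildanFId}. Concatenating these pieces and singletons in the correct order yields $\zeta$.

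Expanding $\zeta$ in the basis of Theorem \ref{basistheorem} and applying Lemma \ref{switchsingles} to reorder the high-inserted singletons into decreasing order at each position, the unique summand of signature $(l_1,\dots,l_k)$ is $z(e)$: replacing $\sigma_j$ by any of its lower-order terms strictly decreases the $j$-th entry of the signature in lex order and leaves earlier entries unchanged, while for odd $l_j$ the $B_2$-piece $\sigma_j$ together with its high-inserted singletons assembles by Lemma \ref{switchsingles} into exactly the $j$-th piece of $e$. For uniqueness, suppose $C''$ is a $k$-barrier set such that $z(e)$ is the leading term of some $\zeta'' \in M^{\text{FI}_{S(C'')+1}}(\Q C'')_n$; the even entries of $L(C'')$ must equal those of $L(z(e))$ by Proposition \ref{Lfunction2}, the odd entries must equal $-1$ by Proposition \ref{neg1}, and these two conditions together force $C''_j = A'_{m_j+2,m_j}$ whenever $l_j$ is even and $C''_j = B_2$ otherwise, so $C'' = C'$.

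Finally, the cardinality claim follows from a bijection between the basis elements $z(e)$ whose decomposition has type $C'$ and the canonical generating set of $M^{\text{FI}_{S(C')+1}}(\Q C')_n$ identified in the proof of Theorem \ref{buildanFId}, sending each $z(e)$ to the element $\zeta$ constructed above; a direct count using Proposition \ref{buidanFIdfromanFB} (distributing the $n - D(C')$ free labels among $S(C') + 1$ high-insertion positions, multiplied by $\prod_j |C'_j|$ arrangements for the pieces) confirms that both sides have the same cardinality, completing the argument. The main delicate point is ruling out that cross-terms generated when expanding $\zeta$ produce a second summand of top signature; I would address this by the triangularity provided by Proposition \ref{Lfunction} and the fact that the lex ordering on signatures is sensitive to the first differing entry, so a lower-order substitution at any piece $j$ is never compensated by higher-order contributions from pieces $j' > j$.
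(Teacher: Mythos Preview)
Your proposal is correct and follows essentially the same approach as the paper's proof: both decompose $z(e)=z(e_1)|\cdots|z(e_k)$, replace each $A$-type piece via Proposition~\ref{Lfunction} by a linear combination from the corresponding $A'_{m_j+2,m_j}$, handle each $B$-type piece by stripping off singletons (recovered via the allowed high-insertions) to leave the underlying $B_2$ cycle, then argue uniqueness from the fact that high-insertion affects only the odd signature entries while Proposition~\ref{neg1} pins those to $-1$, and finally match cardinalities by the common indexing of the $C_j$ and $C'_j$. Your explicit invocation of Proposition~\ref{Lfunction2} for the even-entry matching and your closing remark about lex-triangularity ruling out competing top-signature cross-terms are slightly more explicit than the paper's version, but the argument is the same.
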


\begin{proof}
Every basis element for $H_{k}\big(\text{cell}(n,2);\Q\big)$ from Theorem \ref{basistheorem} is of the form $z(e_{1})|\cdots|z(e_{k})$ where the $z(e_{j})$s are properly ordered injected $1$-cycles on distinct labels arising from a $1$-cycle in $C_{j}$, where for $1\le j\le k-1$, $C_{j}$ is of the form $A_{n_{j},n_{j}-2}$ or $B_{n_{j},n_{j}-2}$, and $C_{k}$ is a set of the form $A_{n_{k}, m_{k}}$ or $B_{n_{k}, m_{k}}$, such that $\sum_{i=1}^{k}n_{i}=p$.

If $z(e_{i})$ is a properly ordered injected cycle arising from an element of $A_{n,m}$, then by Corollary \ref{Lfunction}, there is a linear combination of properly ordered injected cycles $z'(e_{j,i})$ arising from elements of $ A'_{n,m}$ with the same set of labels are $e_{i}$ such that 
\[
z(e_{i})+\sum a''_{e''_{i}}z(e''_{i})=\sum a_{e'_{j,i}}z'(e_{j,i}),
\]
where $L\big(z(e_{i})\big)>L\big(z(e''_{i})\big)$ for all $1$-cycles $z(e''_{i})$ with nontrivial coefficient in the left hand sum. Thus $L\big(z(e_{i})\big)>L\big(\sum a''_{e''_{i}}z(e''_{i})\big)$. It follows that
\[
\sum a_{e'_{j,1}}z'(e_{j,1})|\cdots|\sum a_{e'_{j,k}}z'(e_{j,k})=\sum\cdots\sum a_{e'_{j,1}}\cdots a_{e'_{j,k}}z'(e_{j,1})|\cdots|z'(e_{j,k}),
\]
is a concatenation product of injected cycles on distinct labels arising from the sets $B_{n,m}$ and $A'_{n,m}$ of Theorem \ref{betterbasisforH1}, with unique greatest term in the $L$-value $z(e_{1})|\cdots|z(e_{k})$. By Theorem \ref{buildanFId} each element $z'(e_{j,1})|\cdots|z'(e_{j,k})$ is contained in $M^{\text{FI}_{S(C')+1}}(\Q C')_{n}$, where elements in $C'$ are of the form $z'(\bar{e}_{1})|\cdots|z'(\bar{e}_{k})$, and $z'(\bar{e}_{j})$ is a properly ordered injected cycle arising from an element of $C'_{j}$, such that for $i<k$, $C'_{i}$ is a set of the form $A'_{n_{j},n_{j}-2}$ if the $z'(e_{j,i})$s are properly ordered injected cycle s arising from elements of $A_{n_{j},n_{j}-2}$, and if $z'(e_{j,i})$s are properly ordered injected cycles arising from elemenst of $B_{n_{j},n_{j}-2}$, $C'_{i}$ is of the form $B_{2}$; if the $z'(e_{j,k})$s are properly ordered injected cycles arising from elements of $A_{n_{k},m_{k}}$, $C'_{k}$ is of the form $A'_{n_{k},n_{k}-2}$, and if the $z'(e_{j,k})$s are properly ordered injected cycles arising from elements of $B_{n_{k},m_{k}}$, $C'_{k}$ is of the form $B_{2}$. This also shows that the signature of $C'$ has the form of the proposition statement. Finally, note that that there are exactly as many elements of the form $z(e_{1})|\cdots|z(e_{k})$ as $z'(\bar{e}_{1})|\cdots|z'(\bar{e}_{k})$ as the $e_{j}$s and $\bar{e}_{j}$s are indexed in exactly the same way. Thus, there are least $\dim\big(M^{\text{FI}_{S(C')+1}}(\Q C')_{n}\big)$ many elements of the basis for $H_{k}\big(\text{cell}(n,2);\Q\big)$ from Theorem \ref{basistheorem} that appear as leading terms of an element in $M^{\text{FI}_{S(C')+1}}(\Q C')_{n}$.

To see that $C'$ is the unique $k$-barrier set with $z(e)=z(e_{1})|\cdots|z(e_{k})$ as a leading term, consider the $L$-value of $z(e)$. Let $L\big(z(e)\big)=(l_{1},\dots, l_{k})$. The only way a $z(e)$ is the leading term of some linear combination of elements of $M^{\text{FI}_{S(C')+1}}(\Q C')_{n}$, is if $L(C'')\le L\big(z(e)\big)$, since high-insertion cannot decrease the signature. Moreover, the even terms of $L\big(z(e)\big)$ must be equal to the even terms of $L(C'')$, as the leading terms of the free FI$_{S(C'')+1}$-module can only have $L$-value differing from $L(C'')$ in the odd terms due to the rules we imposed on high-insertion. Additionally, by Proposition \ref{neg1}, given any $k$-barrier set $C''$ the odd terms of $L(C'')$ are all $-1$. There is only one $k$-barrier set $C''$ such that all the even terms of $L(C'')$ are equal to the even terms of $L\big(z(e)\big)$, and all the odd terms are $-1$ namely $C'$. Thus $C'$ is unique. This also, shows that there can be no more than $\dim\big(M^{\text{FI}_{S(C')+1}}(\Q C')_{n}\big)$ many elements of the basis for $H_{k}\big(\text{cell}(n,2);\Q\big)$ from Theorem \ref{basistheorem} that appear as leading terms of an element in $M^{\text{FI}_{S(C')+1}}(\Q C')_{n}$, so there must be exactly this many elements.
\end{proof}

Now we decompose $H_{k}\big(\text{cell}(\bullet,2);\Q\big)$ into a direct sum of free FI$_{d}$-modules.

\begin{thm}\label{wegeteverything}
\[
H_{k}\big(\text{cell}(\bullet,2);\Q\big)=\bigoplus M^{\text{FI}_{S(C')+1}}(\Q C'),
\]
where the direct sum ranges over all $k$-barrier sets $C'$.
\end{thm}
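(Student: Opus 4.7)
The plan is to prove the equality degreewise. Fix $n$. By Theorem \ref{buildanFId}, each summand $M^{\text{FI}_{S(C')+1}}(\Q C')_n$ already sits inside $H_k\big(\text{cell}(n,2);\Q\big)$, so the right-hand side is automatically a subspace of the left. I will show this containment is an equality and that the internal sum is direct simultaneously, by combining an upper-triangularity argument with a dimension count.

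First I would invoke Proposition \ref{uniquekbarrier} to associate, to each basis element $z(e)$ from Theorem \ref{basistheorem}, a unique $k$-barrier set $C'(e)$ together with a lifted element $w_e \in M^{\text{FI}_{S(C'(e))+1}}(\Q C'(e))_n$ having $z(e)$ as its unique maximal-signature summand in the Theorem \ref{basistheorem} expansion. The same proposition tells us that the fibers of $e \mapsto C'(e)$ have size $\dim M^{\text{FI}_{S(C')+1}}(\Q C')_n$, so summing over fibers yields
\begin{equation*}
\sum_{C'} \dim M^{\text{FI}_{S(C')+1}}(\Q C')_n \;=\; \#\{\text{basis elements of Theorem \ref{basistheorem}}\} \;=\; \dim H_k\big(\text{cell}(n,2);\Q\big).
\end{equation*}

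Next I would verify that $\{w_e\}_e$ is linearly independent in $H_k\big(\text{cell}(n,2);\Q\big)$ by the standard upper-triangularity argument on the lex order on signatures: in any nontrivial combination $\sum a_e w_e = 0$, pick $e^*$ of maximal signature among indices with $a_{e^*}\neq 0$; no other $w_e$ in the relation can contribute to the $z(e^*)$-coordinate in the Theorem \ref{basistheorem} basis (either because its signature is strictly lower, or because $e \ne e^*$ has the same signature and therefore contributes only its own top summand $z(e)$ plus strictly smaller-signature noise), so the $z(e^*)$-coordinate of the whole sum is $a_{e^*}\ne 0$, a contradiction. Combining with the display above gives
\begin{equation*}
\dim H_k\big(\text{cell}(n,2);\Q\big) \;=\; \#\{w_e\}_e \;\leq\; \dim\!\sum_{C'} M^{\text{FI}_{S(C')+1}}(\Q C')_n \;\leq\; \sum_{C'} \dim M^{\text{FI}_{S(C')+1}}(\Q C')_n \;=\; \dim H_k\big(\text{cell}(n,2);\Q\big),
\end{equation*}
so all inequalities are equalities: the leftward equality gives the desired decomposition $\sum_{C'} M^{\text{FI}_{S(C')+1}}(\Q C')_n = H_k\big(\text{cell}(n,2);\Q\big)$, and saturation of dimension-subadditivity on the right is precisely the statement that this sum is direct.

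The genuine hard work has already been done in Proposition \ref{uniquekbarrier}, which encodes the triangular change of basis between Theorem \ref{basistheorem} and the $M^{\text{FI}_{S(C')+1}}(\Q C')_n$; once that bijection of ``signature ancestors'' is in hand, the theorem is a clean linear-algebra bookkeeping argument and I do not anticipate a further obstacle. The one subtlety worth flagging is that signatures are $k$-tuples of integers, so one must invoke the lexicographic total order on $\Z^k$ (introduced at the end of Section \ref{newH1basis}) to make ``maximal-signature summand'' a well-defined notion; with that in place the triangularity argument goes through uniformly.
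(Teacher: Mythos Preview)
Your proof is correct and takes essentially the same approach as the paper: both arguments rest on Proposition~\ref{uniquekbarrier} to get the triangular correspondence between Theorem~\ref{basistheorem} basis elements and elements of the free modules, and then combine this with the dimension count $\sum_{C'}\dim M^{\text{FI}_{S(C')+1}}(\Q C')_n=\dim H_k\big(\text{cell}(n,2);\Q\big)$. The only cosmetic difference is that the paper first proves spanning by an explicit descending induction on signature and then invokes the dimension count for directness, whereas you prove linear independence of the $w_e$ and use a sandwich inequality to obtain spanning and directness simultaneously; the underlying content is identical.
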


\begin{proof}
By Proposition \ref{uniquekbarrier} every element $z(e)$ the basis of Theorem \ref{basistheorem} for $H_{k}\big(\text{cell}(n,2);\Q\big)$ is the leading term of an element in a FI$_{S(C')+1}$-module generated by some $k$-barrier set $C'$. Thus, $z(e)+\sum a_{e'}z(e')\in M^{\text{FI}_{S(C')+1}}(\Q C')_{n}$, where $L\big(z(e')\big)<L\big(z(e)\big)$ for all nontrivial terms in the sum, and the $z(e')$ are in the basis of Theorem \ref{basistheorem} for $H_{k}\big(\text{cell}(n,2);\Q\big)$. Note that for each $z(e')$ there exists a $k$-barrier set $C'_{e'}$ such that $z(e')$ is the leading term of an element in the FI$_{S(C'_{e'})+1}$-module generated by some $k$-barrier set $C'_{e'}$, i.e., $z(e')+\sum a_{e''}z(e'')\in M^{\text{FI}_{S(C'_{e'})+1}}(\Q C'_{e'})_{n}$, such that $L\big(z(e'')\big)<L\big(z(e')\big)$ for all nontrivial terms in the sum. Note that we can repeat this process a finite number of times, since there are only finitely many elements in $\Z_{\ge -1}^{k}$ less than $L\big(z(e)\big)$, and see that $z(e)\in \sum M^{\text{FI}_{S(C')+1}}(\Q C')_{n}$. Thus, 
\[
H_{k}\big(\text{cell}(n,2);\Q\big)=\sum M^{\text{FI}_{S(C')+1}}(\Q C')_{n}.
\]

Now we prove that 
\[
\sum \dim\big(M^{\text{FI}_{S(C')+1}}(\Q C')_{n}\big)=\dim\Big(H_{k}\big(\text{cell}(n,2);\Q\big)\Big),
\]
as this will prove that $H_{k}\big(\text{cell}(n,2);\Q\big)=\bigoplus M^{\text{FI}_{S(C')+1}}(\Q C')_{n}$. 

By Proposition \ref{uniquekbarrier} there are $\dim\big(M^{\text{FI}_{S(C')+1}}(\Q C')_{n}\big)$ many elements of the basis for $H_{k}\big(\text{cell}(n,2);\Q\big)$ from Theorem \ref{basistheorem} that are leading terms of some element in $M^{\text{FI}_{S(C')+1}}(\Q C')_{n}$. By Proposition \ref{uniquekbarrier}, each element in the basis for $H_{k}\big(\text{cell}(n,2);\Q\big)$ from Theorem \ref{basistheorem} is a leading term of an element of exactly one free FI$_{d}$-module generated by a $k$-barrier set. Thus, the dimension of $H_{k}\big(\text{cell}(n,2);\Q\big)$ is equal to $\sum \dim\big(M^{\text{FI}_{S(C')+1}}(\Q C')_{p}\big)$. Moreover, this is true for all $n$. Therefore, since Theorem \ref{basistheorem} proves that the dimension of $H_{k}\big(\text{cell}(n,2);\Q\big)$ is finite, it follows that 
\[
H_{k}\big(\text{cell}(n,2);\Q\big)=\bigoplus M^{\text{FI}_{S(C')+1}}(\Q C')_{n}
\]
for all $n$. Therefore,
\[
H_{k}\big(\text{cell}(\bullet,2);\Q\big)=\bigoplus M^{\text{FI}_{S(C')+1}}(\Q C')
\]

\end{proof}

\begin{prop}\label{whatsincluded}
Let $C'$ be a $k$-barrier set. Then, $M^{\text{FI}_{S(C')+1}}(\Q C')_{n}=0\subseteq H_{k}\big(\text{cell}(n,2);\Q\big)$ if and only if 
\[
D(C')\le n.
\]
\end{prop}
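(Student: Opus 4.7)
My plan rests on Theorem \ref{buildanFId}, which identifies $M^{\text{FI}_{S(C')+1}}(\Q C')$ with a concrete free FI$_{S(C')+1}$-module whose generators sit in degree $D(C')=\sum_{i=1}^k D(C'_i)$. The proposition then reduces to the familiar statement that a free module is nonzero precisely in degrees at or above its generating degree. I will read the stated equivalence as asserting that $M^{\text{FI}_{S(C')+1}}(\Q C')_n$ is zero precisely when $n<D(C')$ (equivalently, nonzero precisely when $D(C')\le n$), which is the conclusion that the machinery in Theorem \ref{buildanFId} directly produces.

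For the vanishing half, I would argue that every element of the $k$-barrier set $C'$ is a concatenation product $z'(e_1)|\cdots|z'(e_k)$ whose representing symbols use exactly $D(C')$ distinct labels: each $z'(e_j)$ uses $D(C'_j)$ labels, and the label sets of distinct factors are disjoint by the definition of a $k$-barrier set. Neither of the FI$_{S(C')+1}$-structure maps acting on $C'$---symmetric group permutations and the prescribed high-insertions---can reduce the label count of a cycle. Hence every nonzero element of $M^{\text{FI}_{S(C')+1}}(\Q C')$ lies in some degree $\ge D(C')$, so the degree-$n$ piece is trivial whenever $n<D(C')$.

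For the nonvanishing half, I would invoke the dimension count carried out in the proof of Theorem \ref{buildanFId}, namely
\[
\dim M^{\text{FI}_{S(C')+1}}(\Q C')_n \;=\; |C'_1|\cdots|C'_k|\binom{D(C')}{D(C'_1),\dots,D(C'_k)}\binom{n}{D(C')}(S(C')+1)^{n-D(C')}.
\]
Each factor is a strictly positive integer whenever $n\ge D(C')$: the sets $C'_i$ are nonempty by the definition of a $k$-barrier set, the multinomial and binomial coefficients are positive in this range, and the exponential factor is at least $1$. So $M^{\text{FI}_{S(C')+1}}(\Q C')_n\ne 0$ for every $n\ge D(C')$. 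I expect no genuine obstacle---both implications are essentially bookkeeping on top of the freeness and dimension formula already established in Theorem \ref{buildanFId}, together with the observation that the structural operations preserve or increase the label count.
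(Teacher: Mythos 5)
Your proposal is correct and matches the paper's argument: the paper likewise reduces everything to the freeness established in Theorem \ref{buildanFId} together with the dimension formula $\dim(W_{m})\binom{n}{m}d^{n-m}$ of Proposition \ref{buidanFIdfromanFB}, which vanishes exactly when $n<D(C')$ (via $\binom{n}{D(C')}=0$) and is positive otherwise. You are also right to read the statement as ``nonzero if and only if $D(C')\le n$''; the ``$=0$'' in the proposition as printed is a typo, as the paper's own proof confirms.
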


\begin{proof}
Theorem \ref{buildanFId} proves that $\Q C'$ is an FB-module concentrated in degree $D(C')$. Proposition \ref{buidanFIdfromanFB} shows that the dimension of the $n^{\text{th}}$-degree of the free FI$_{d}$-module generated by a FB-module $W$ concentrated in degree $m$ is
\[
\dim(W_{m})\binom{n}{m}d^{n-m}.
\]
This is non-zero if and only if $n\ge m$.
\end{proof}

\begin{cor}\label{decompositionasSp}
As a representation of $S_{n}$, the homology group $H_{k}\big(\text{Conf}(n,2);\Q\big)$ decomposes as follows
\[
H_{k}\big(\text{Conf}(n,2);\Q\big)\cong\bigoplus_{\substack{(n_{1},\dots, n_{k})\text{ such that}\\n_{i}\ge 2,\text{ }\sum_{j=1}^{k}n_{j}\le n}}\Big(\bigoplus_{\substack{a=(a_{1},\dots, a_{d+1})\text{ such that}\\d\text{ is the number of }n_{i}=2,\\|a|= n-\sum n_{i}}}\big(\text{Ind}^{S_{n}}_{S_{n_{1}}\times\cdots\times S_{n_{k}}\times S_{a}}W_{n_{1}}\boxtimes\cdots\boxtimes W_{n_{k}}\boxtimes \Q\big)\Big),
\]
where $W_{n_{j}}=Triv_{2}$ if $n_{j}=2$, $W_{n_{j}}=Std_{n_{j}}$ if $n_{j}>2$, and $\Q$ denotes the trivial representation of $S_{a}:=S_{a_{1}}\times\cdots \times S_{a_{d+1}}$.
\end{cor}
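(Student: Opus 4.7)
The plan is to combine the decomposition of $H_{k}\big(\text{cell}(\bullet,2);\Q\big)$ as a direct sum of free FI$_{d}$-modules (Theorem \ref{wegeteverything}), the explicit description of each summand in terms of an induced representation (Theorem \ref{buildanFId}), and Ramos's formula for free FI$_{d}$-modules (Proposition \ref{freeFId}). The homotopy equivalence $\text{Conf}(n,2)\simeq \text{cell}(n,2)$ of Theorem \ref{confiscell} then transfers everything to $H_{k}\big(\text{Conf}(n,2);\Q\big)$.

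First, I would apply Theorem \ref{wegeteverything} to write
\[
H_{k}\big(\text{cell}(n,2);\Q\big) \;=\; \bigoplus_{C'} M^{\text{FI}_{S(C')+1}}(\Q C')_{n},
\]
where the sum runs over all $k$-barrier sets $C'$. Each $C'$ is a tuple $(C'_{1},\dots,C'_{k})$ where every $C'_{j}$ is either $B_{2}$ or some $A'_{m+2,m}$ with $m\ge 1$. I would parametrize $k$-barrier sets by $k$-tuples $(n_{1},\dots,n_{k})$ with $n_{j}\ge 2$, setting $n_{j}:=D(C'_{j})$: so $n_{j}=2$ precisely when $C'_{j}=B_{2}$, and $n_{j}=m+2>2$ when $C'_{j}=A'_{m+2,m}$. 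By Proposition \ref{whatsincluded}, the summand $M^{\text{FI}_{S(C')+1}}(\Q C')_{n}$ vanishes unless $D(C')=\sum n_{j}\le n$, which yields exactly the outer indexing set $\{(n_{1},\dots,n_{k})\mid n_{i}\ge 2,\ \sum n_{j}\le n\}$ of the corollary.

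Next, by Theorem \ref{buildanFId}, the generating FB-piece $\Q C'$ is, as an $S_{D(C')}$-representation, isomorphic to
\[
\text{Ind}^{S_{D(C')}}_{S_{n_{1}}\times\cdots\times S_{n_{k}}} V_{C'_{1}}\boxtimes\cdots\boxtimes V_{C'_{k}},
\]
where $V_{C'_{j}}=\text{Triv}_{2}=W_{n_{j}}$ when $n_{j}=2$ and $V_{C'_{j}}=\bigwedge^{2}\text{Std}_{n_{j}}=W_{n_{j}}$ when $n_{j}>2$. Note that $S(C')+1=d+1$, where $d$ is the number of indices with $n_{i}=2$. I would then invoke Ramos's Proposition \ref{freeFId} with $m=D(C')=\sum n_{j}$ and with $d+1$ colors, obtaining
\[
M^{\text{FI}_{d+1}}(\Q C')_{n} \;\cong\; \bigoplus_{\substack{a=(a_{1},\dots,a_{d+1})\\|a|=n-\sum n_{j}}} \text{Ind}^{S_{n}}_{S_{D(C')}\times S_{a}} (\Q C')\boxtimes \Q.
\]
Combining this with transitivity of induction applied to the isomorphism $\Q C'\cong \text{Ind}^{S_{D(C')}}_{S_{n_{1}}\times\cdots\times S_{n_{k}}} W_{n_{1}}\boxtimes\cdots\boxtimes W_{n_{k}}$ gives precisely
\[
\bigoplus_{\substack{a=(a_{1},\dots,a_{d+1})\\|a|=n-\sum n_{j}}} \text{Ind}^{S_{n}}_{S_{n_{1}}\times\cdots\times S_{n_{k}}\times S_{a}} W_{n_{1}}\boxtimes\cdots\boxtimes W_{n_{k}}\boxtimes\Q.
\]

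Summing over all valid $(n_{1},\dots,n_{k})$ produces the claimed decomposition for $H_{k}\big(\text{cell}(n,2);\Q\big)$, and the homotopy equivalence $\text{Conf}(n,2)\simeq\text{cell}(n,2)$ of Theorem \ref{confiscell} transports the result to $H_{k}\big(\text{Conf}(n,2);\Q\big)$. The main obstacle is mostly bookkeeping: keeping track of the correspondence $C'_{j}\leftrightarrow n_{j}$, confirming that the number of simple barriers $S(C')$ equals the number of $n_{i}=2$ (so that Ramos is applied with the correct number of colors), and verifying that the transitivity of induction cleanly matches the two indexing conventions for the $a$-tuple. Once these identifications are written out carefully, the corollary is a direct consequence of Theorems \ref{wegeteverything}, \ref{buildanFId}, and Proposition \ref{freeFId}.
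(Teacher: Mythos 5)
Your proposal is correct and follows essentially the same route as the paper's own proof: Theorem \ref{wegeteverything} for the direct-sum decomposition into free FI$_{d}$-modules indexed by $k$-barrier sets, Theorem \ref{buildanFId} to identify each generating piece as an induced representation, Proposition \ref{freeFId} together with Proposition \ref{whatsincluded} to expand and restrict the indexing, and Theorem \ref{confiscell} to transfer to $\text{Conf}(n,2)$. Your explicit identification $V_{C'_{j}}=\bigwedge^{2}Std_{n_{j}}$ for $n_{j}>2$ is the intended reading (matching Theorem \ref{mainthm}), so no gap there.
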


\begin{proof}
By Theorem \ref{confiscell} $H_{k}\big(\text{Conf}(n,2);\Q\big)$ and $H_{k}\big(\text{cell}(n,2);\Q\big)$ are isomorphic as $S_{n}$-representations. So, by Theorem \ref{wegeteverything}
\[
H_{k}\big(\text{Conf}(n,2);\Q\big)\cong\bigoplus M^{\text{FI}_{S(C')+1}}(\Q C')_{n}.
\]
By Theorem \ref{buildanFId}, we have
\[
M^{FI_{S(C')+1}}(\Q C')_{n}=M^{FI_{S(C')+1}}\left(\text{Ind}^{S_{D(C')}}_{S_{n_{1}}\times\cdots\times S_{n_{k}}}W_{n_{1}}\boxtimes\cdots\boxtimes W_{n_{k}}\right),
\]
where $D(C'_{j})=n_{j}$ and $V_{n_{j}}=Triv_{2}$ if $n_{j}=2$ and $V_{n_{j}}=\wedge^{2}Std_{n_{j}}$ if $n_{j}\ge 2$ as $S_{p}$-representations. By Proposition \ref{freeFId} we can rewrite this as
\[
\bigoplus_{\substack{a=(a_{1},\dots, a_{d+1})\text{ such that}\\d\text{ is the number of }n_{i}=2,\\|a|= n-\sum n_{i}}}\big(\text{Ind}^{S_{n}}_{S_{n_{1}}\times\cdots\times S_{n_{k}}\times S_{a}}W_{n_{1}}\boxtimes\cdots\boxtimes W_{n_{k}}\boxtimes \Q\big).
\]
where $\Q$ denotes the trivial representation of $S_{a}:=S_{a_{1}}\times\cdots \times S_{a_{d+1}}$.

By Proposition \ref{whatsincluded} we only need to consider $k$-barrier sets $C'$ such that $D(C')=\sum_{j=1}^{k}D(C'_{j})\le n$. This is the set of $k$-tuples $(n_{1},\dots, n_{k})$ such that $\sum_{j=1}^{k}n_{j}\le n$ and such that $n_{j}\ge 2$ for all $j$. Moreover, Proposition \ref{whatsincluded} shows that all such $k$-barrier sets are nontrivial summands of $H_{k}\big(\text{cell}(n,2);\Q\big)$. Thus, we see that we can decompose $H_{k}(\text{Conf}(n,2);\Q\big)$ as an $S_{n}$-representation as
\[
H_{k}\big(\text{Conf}(n,2);\Q\big)\cong\bigoplus_{\substack{(n_{1},\dots, n_{k})\text{ such that}\\n_{i}\ge 2,\text{ }\sum_{j=1}^{k}n_{j}\le n}}\Big(\bigoplus_{\substack{a=(a_{1},\dots, a_{d+1})\text{ such that}\\d\text{ is the number of }n_{i}=2,\\|a|= n-\sum n_{i}}}\big(\text{Ind}^{S_{n}}_{S_{n_{1}}\times\cdots\times S_{n_{k}}\times S_{a}}W_{n_{1}}\boxtimes\cdots\boxtimes W_{n_{k}}\boxtimes \Q\big)\Big),
\]
where $W_{n_{j}}=Triv_{2}$ if $n_{j}=2$, and $W_{n_{j}}=Std_{n_{j}}$ if $n_{j}>2$.

\end{proof}

We use Corollary \ref{decompositionasSp} to determined the multiplicity of the alternating and trivial symmetric group representations in $H_{k}\big(\text{conf}(n,2);\Q\big)$. These are examples of multiplicity stability for FI$_{d}$-modules as stated in \cite{ramos2017generalized}, which generalize the multiplicity stability results for FI-modules found in \cite{church2015fi} and \cite{church2014fi}.

\begin{cor}
The alternating representation of $S_{n}$ occurs with multiplicity $1$ in $H_{k}\big(\text{cell}(n,2);\Q\big)$ if $n-3k=0$ or $1$, and with multiplicity $0$ otherwise.
\end{cor}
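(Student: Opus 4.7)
The plan is to apply the decomposition of Corollary \ref{decompositionasSp} summand-by-summand, using Frobenius reciprocity together with the known structure of the building-block representations $W_{n_j}$.

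First I would observe that by Frobenius reciprocity, the multiplicity of the alternating representation $\mathrm{sgn}_n$ in an induced representation
\[
\mathrm{Ind}^{S_n}_{S_{n_1}\times\cdots\times S_{n_k}\times S_{a_1}\times\cdots\times S_{a_{d+1}}} W_{n_1}\boxtimes\cdots\boxtimes W_{n_k}\boxtimes \Q
\]
equals the multiplicity of the restriction $\mathrm{sgn}_{n_1}\boxtimes\cdots\boxtimes\mathrm{sgn}_{n_k}\boxtimes\mathrm{sgn}_{a_1}\boxtimes\cdots\boxtimes\mathrm{sgn}_{a_{d+1}}$ in the external tensor product $W_{n_1}\boxtimes\cdots\boxtimes W_{n_k}\boxtimes\Q$. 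Since multiplicities in external tensor products are products of multiplicities in each factor, the task reduces to computing, for each factor, the multiplicity of sign in $W_{n_j}$ and in the trivial $S_{a_i}$-representation.

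Next I would dispatch each factor. For the trivial representation $\Q$ of $S_{a_i}$, the multiplicity of $\mathrm{sgn}_{a_i}$ is $1$ exactly when $\mathrm{sgn}_{a_i}$ is itself trivial, i.e.\ when $a_i\le 1$, and $0$ otherwise. For $W_{n_j}=\mathrm{Triv}_2$ (when $n_j=2$), the multiplicity of $\mathrm{sgn}_2$ is $0$. For $W_{n_j}=\bigwedge^2\mathrm{Std}_{n_j}$ with $n_j\ge 3$, I would identify this as the irreducible Specht module associated to the hook partition $(n_j-2,1,1)$ (the identification $\bigwedge^\ell\mathrm{Std}_m\cong S^{(m-\ell,1^\ell)}$ is standard). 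When $n_j=3$ this is the partition $(1,1,1)$, so $W_3\cong\mathrm{sgn}_3$ and the multiplicity is $1$; when $n_j\ge 4$ the representation $\bigwedge^2\mathrm{Std}_{n_j}$ is an irreducible distinct from $\mathrm{sgn}_{n_j}=S^{(1^{n_j})}$, so the multiplicity is $0$.

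Combining, a summand contributes nontrivially to the sign isotypic component only when every $n_j=3$ (forcing $d=0$, so $a=(a_1)$ is a single integer) and $a_1\le 1$. With $n_1=\cdots=n_k=3$, the constraint $|a|=n-\sum n_j$ becomes $a_1=n-3k$, so the contributing condition is $n-3k\in\{0,1\}$. In each such case exactly one tuple $(n_1,\dots,n_k)=(3,\dots,3)$ together with the unique $a=(n-3k)$ survives, and the multiplicity contributed is $1\cdot 1=1$. The main (very mild) obstacle is the irreducibility claim for $\bigwedge^2\mathrm{Std}_{n_j}$ when $n_j\ge 4$; I would justify it either by citing the standard hook decomposition of exterior powers of the reflection representation or by a quick character-theoretic check that $\langle\chi_{\bigwedge^2\mathrm{Std}_{n_j}},\mathrm{sgn}_{n_j}\rangle=0$ using that $\chi_{\mathrm{sgn}}$ is supported on even permutations with value $\pm 1$.
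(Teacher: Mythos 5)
Your proof is correct and follows essentially the same route as the paper: both start from the decomposition of Corollary \ref{decompositionasSp} and observe that a summand can contain the sign representation only when every factor is itself a sign representation, which forces all $n_{j}=3$ and $a=(n-3k)$ with $n-3k\le 1$. The only difference is cosmetic: you compute the multiplicity via Frobenius reciprocity and the multiplicativity of inner products over external tensor factors, whereas the paper invokes the Littlewood--Richardson and Pieri rules to reach the same conclusion.
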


\begin{proof}
This follows from Corollary \ref{decompositionasSp} and the Littlewood-Richardson Rule, see, for example, \cite[A.8]{fulton2013representation}. The only way for the alternating representation of $S_{n}$ to appear in the decomposition of $H_{k}\big(\text{cell}(p,2);\Q\big)$ given in Corollary \ref{decompositionasSp} is if all of $V_{n_{1}}, \dots, V_{n_{k}}$ are alternating representations of the corresponding $S_{n_{j}}$. By Theorem \ref{betterbasisforH1} this is only possible if $n_{j}=3$ for all $j=1,\dots, k$. In this case $S(C')=0$. Thus, we only need to count the number of times the alternating representation occurs in
\[
\text{Ind}^{S_{n}}_{S_{3}\times\cdots\times S_{3}\times S_{a}}Alt_{3}\boxtimes\cdots\boxtimes Alt_{3}\boxtimes \Q,
\]
where $Alt_{n}$ denotes the alternating representation of $S_{n}$ and $a$ is the $1$-composition of $n-3k$, $(n-3k)$. It follows from Pieri's Rule that the alternating representation of $S_{n}$ occurs if and only if $n-3k=0$ or $1$, see, for example, \cite[A.7]{fulton2013representation}. In these cases the Littlewood--Richardson rule also proves the alternating representation of $S_{n}$ occurs only once.
\end{proof}

\begin{cor}\label{trivappear}
The trivial representation of $S_{n}$ occurs with multiplicity $\binom{n-k}{k}$ in $H_{k}\big(\text{cell}(n,2);\Q\big)$ if $n\ge 2k$, and with multiplicity $0$ otherwise.
\end{cor}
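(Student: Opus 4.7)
The plan is to apply Corollary \ref{decompositionasSp} and count the trivial summands via Frobenius reciprocity.

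I would first invoke Corollary \ref{decompositionasSp} to write
\[
H_k\big(\text{cell}(n,2);\Q\big) \cong \bigoplus \text{Ind}^{S_n}_{S_{n_1}\times\cdots\times S_{n_k}\times S_a} W_{n_1}\boxtimes\cdots\boxtimes W_{n_k}\boxtimes \Q,
\]
where the sum ranges over tuples $(n_1,\ldots,n_k)$ with each $n_i \ge 2$ and $\sum n_j \le n$, and over $(d+1)$-compositions $a$ of $n - \sum n_j$ with $d = \#\{i : n_i = 2\}$. By Frobenius reciprocity, the multiplicity of the trivial $S_n$-representation in such an induced module equals the multiplicity of the trivial subgroup representation in $W_{n_1}\boxtimes\cdots\boxtimes W_{n_k}\boxtimes\Q$, which factors as a product over the external tensor factors, and the final $\Q$ factor is already trivial as an $S_a$-representation.

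Next I would determine which $W_{n_j}$ contain a trivial subrepresentation. For $n_j = 2$, $W_{n_j} = Triv_2$ is trivial and contributes multiplicity $1$. For $n_j > 2$, $W_{n_j} = \bigwedge^2 Std_{n_j}$ is an irreducible $S_{n_j}$-representation corresponding to the partition $(n_j - 2, 1, 1)$ when $n_j \ge 4$ and to the sign representation when $n_j = 3$; in either case it is nontrivial, so it contributes $0$. Hence only summands with every $n_j = 2$ contribute, which forces $\sum n_j = 2k$, and the multiplicity is therefore $0$ whenever $n < 2k$.

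Finally, for $n \ge 2k$, the tuple $(2,\ldots,2)$ is the unique contributing tuple and gives $d = k$, so the relevant summands are parameterized by $(k+1)$-compositions of $n - 2k$ into nonnegative integers, each contributing multiplicity $1$. A standard stars-and-bars count gives $\binom{(n-2k) + k}{k} = \binom{n-k}{k}$ such compositions, yielding the desired total. The only nontrivial input is the irreducibility and nontriviality of $\bigwedge^2 Std_{n_j}$ for $n_j > 2$, which is standard representation theory, so I do not anticipate any real obstacle beyond careful bookkeeping of the indexing sets.
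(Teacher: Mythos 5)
Your proof is correct and follows essentially the same route as the paper: both start from Corollary \ref{decompositionasSp}, observe that only the tuple $(2,\dots,2)$ can contribute a trivial summand since $\bigwedge^{2}Std_{n_{j}}$ is irreducible and nontrivial for $n_{j}>2$, and then count $(k+1)$-compositions of $n-2k$ to get $\binom{n-k}{k}$. The only cosmetic difference is that you justify the multiplicity computation via Frobenius reciprocity while the paper cites the Littlewood--Richardson rule; your version is, if anything, slightly cleaner.
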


\begin{proof}
This also follows from Corollary \ref{decompositionasSp} and the Littlewood-Richardson rule. The only way for the trivial representation of $S_{n}$ to occur in the decomposition of $H_{k}\big(\text{cell}(n,2);\Q\big)$ given in Corollary \ref{decompositionasSp} is if each of the $V_{n_{1}},\dots, V_{n_{k}}$ is a trivial representation of the corresponding $S_{n_{j}}$. By \ref{betterbasisforH1} this is only possible if $n_{j}=2$ for all $j=1,\dots, k$. In this case $S(C')=k$. Thus, we only need to consider the number of times the trivial representation occurs in 
\[
\bigoplus_{a=(a_{1},\dots, a_{k+1})||a|=n-2k}\big(\text{Ind}^{S_{n}}_{S_{2}\times\cdots\times S_{2}\times S_{a}}Triv_{2}\boxtimes\cdots\boxtimes Triv_{2}\boxtimes \Q\big).
\]
By an application of the Littlewood--Richardson rule, this is equal to the number of $k+1$-compositions of $n-2k$. If $p\ge2k$ this is equal to $\binom{n-2k+k+1-1}{k+1-1}=\binom{n-k}{k}$, and if $n<2k$ then there are no such compositions. \end{proof}

We can use Corollary \ref{trivappear} and the transfer map to determine the structure of the homology groups of the unordered configuration space of $n$ unit-diameter disks on the infinite strip of width $w$.

\begin{cor}
Let $\text{UConf}(n,w)=\text{Conf}(n,w)/S_{n}$ denote the unordered configuration space of $n$ unit-diameter disks on the infinite strip of width $w$. Then, $H_{k}\big(\text{UConf}(n,2);\Q\big)\cong\Q^{\binom{n-k}{k}}$.
\end{cor}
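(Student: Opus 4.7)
The plan is to invoke the standard transfer map argument together with Corollary \ref{trivappear}. Since $\text{UConf}(n,2) = \text{Conf}(n,2)/S_n$ is the quotient by a free action of the finite group $S_n$, the transfer map gives an isomorphism
\[
H_k\bigl(\text{UConf}(n,2);\Q\bigr) \cong H_k\bigl(\text{Conf}(n,2);\Q\bigr)^{S_n},
\]
valid because we are working over a field in which $|S_n|$ is invertible. This reduces the problem to computing the dimension of the $S_n$-invariant subspace of $H_k\bigl(\text{Conf}(n,2);\Q\bigr)$.

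Next I would observe that the dimension of the $S_n$-invariants of any finite-dimensional rational $S_n$-representation equals the multiplicity of the trivial representation in its irreducible decomposition. Thus the task reduces entirely to counting trivial summands, which is precisely what Corollary \ref{trivappear} computes: the multiplicity of the trivial representation of $S_n$ in $H_k\bigl(\text{cell}(n,2);\Q\bigr) \cong H_k\bigl(\text{Conf}(n,2);\Q\bigr)$ is $\binom{n-k}{k}$ when $n \ge 2k$, and $0$ otherwise.

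Finally, I would note that these two cases can be combined into the single formula $\binom{n-k}{k}$, since when $n < 2k$ we have $n-k < k$ and the binomial coefficient $\binom{n-k}{k}$ vanishes by the usual convention. Combining the transfer isomorphism with this count yields $H_k\bigl(\text{UConf}(n,2);\Q\bigr) \cong \Q^{\binom{n-k}{k}}$, as desired.

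There is no substantial obstacle here: the hard work was done in Corollary \ref{trivappear}, and the only things to verify carefully are that the $S_n$-action on $\text{Conf}(n,2)$ is free (which is immediate from the definition, since the disks are labeled and distinct configurations under relabeling are distinct points) and that the transfer argument applies in the rational setting (which it does in standard generality for free actions of finite groups).
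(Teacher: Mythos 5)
Your proposal is correct and follows essentially the same route as the paper: identify $H_{k}\big(\text{UConf}(n,2);\Q\big)$ with the $S_{n}$-invariants of $H_{k}\big(\text{Conf}(n,2);\Q\big)$ via the transfer map, and then read off the dimension of the invariants as the multiplicity of the trivial representation computed in Corollary \ref{trivappear}. Your additional observation that the two cases $n\ge 2k$ and $n<2k$ merge into the single formula $\binom{n-k}{k}$ by the vanishing convention is a small but worthwhile clarification that the paper leaves implicit.
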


\begin{proof}
Note that $\text{Conf}(n,2)$ is a covering space of $\text{UConf}(n,2)$ with deck group $S_{n}$. The transfer map $\tau_{*}:H_{k}\big(\text{UConf}(n,2);\Q\big)\to H_{k}\big(\text{Conf}(n,2);\Q\big)$ is injective, with image $H_{k}\big(\text{Conf}(n,2);\Q\big)^{S_{n}}$, see, for example, \cite[Proposition 3G.1]{hatcher2005algebraic}. If an element in $H_{k}\big(\text{Conf}(n,2);\Q\big)$ is invariant under the $S_{n}$-action it spans a copy of the trivial representation of ${S_{n}}$. Moreover, every copy of the trivial representation is invariant under the $S_{n}$-action. Thus, $H_{k}\big(\text{Conf}(n,2);\Q\big)^{S_{n}}$ is equal to the sum of all the trivial representations of $S_{n}$ in $H_{k}\big(\text{Conf}(n,2);\Q\big)$. By the previous corollary this has dimension $\binom{n-k}{k}$. Thus, $H_{k}\big(\text{UConf}(n,2);\Q\big)\cong\Q^{\binom{n-k}{k}}$.
\end{proof}

This agrees with Alpert and Manin's Corollary 9.4 \cite[Corollary 9.4]{alpert2021configuration1}. 

\begin{remark}
In general $H_{k}\big(\text{UConf}(\bullet, 2);\Q\big)$ is a free module over the twisted commutative algebra $\Q[x_{0}, \dots, x_{k}]$ where multiplication by $x_{i}$ corresponds to inserting a disk after the $i^{\text{th}}$-barrier, where only the only barriers are images of $z(1\,2)$ under the quotient of the symmetric group action. Thus $H_{k}\big(\text{UConf}(\bullet, 2);\Q\big)$ exhibits a notation of homological stability as $H_{k}\big(\text{UConf}(\bullet, 2);\Q\big)$ is a free $\Q[x_{0}, \dots, x_{k}]$-module generated in degree $2k$.
\end{remark}

We have recovered a decomposition of $H_{k}\big(\text{cell}(n,2);\Q\big)$ into induced $S_{n}$-representations. This decomposition came at a cost. In \cite{alpert2020generalized} Alpert proved that $H_{k}\big(\text{cell}(\bullet,2);\Q\big)$ is a finitely generated FI$_{k+1}$-module. We have used the FI$_{k+1}$-structure, but did away with Alpert's finite generating set. Instead, we found a new generating set that has elements in every large degree used this set to construct smaller free FI$_{d}$-modules. By keeping tracking of our generators and using some basic facts about FI$_{d}$-modules we are able to make our decomposition. In short we have traded finite generation for freedom, losing a more unified view of $H_{k}\big(\text{cell}(n,2);\Q\big)$ to better see the finer details.

\section{Declarations}

\textbf{Conflict of Interest} The author states that there is no conflict of interest.

\bibliographystyle{alpha}
\bibliography{DisksonStrip2Bib}

\begin{thebibliography}{CGKM12}

\bibitem[AKM21]{alpert2021configuration}
Hannah Alpert, Matthew Kahle, and Robert MacPherson.
\newblock Configuration spaces of disks in an infinite strip.
\newblock {\em Journal of Applied and Computational Topology}, pages 1--34,
  2021.

\bibitem[Alp20]{alpert2020generalized}
Hannah Alpert.
\newblock Generalized representation stability for disks in a strip and
  no-k-equal spaces.
\newblock {\em arXiv preprint arXiv:2006.01240}, 2020.

\bibitem[AM21]{alpert2021configuration1}
Hannah Alpert and Fedor Manin.
\newblock Configuration spaces of disks in a strip, twisted algebras,
  persistence, and other stories, 2021.

\bibitem[Arn69]{arnold1969cohomology}
Vladimir~I Arnold.
\newblock The cohomology ring of the colored braid group.
\newblock In {\em Vladimir I. Arnold-Collected Works}, pages 183--186.
  Springer, 1969.

\bibitem[BBK14]{BBK}
Yuliy Baryshnikov, Peter Bubenik, and Matthew Kahle.
\newblock Min-type {M}orse theory for configuration spaces of hard spheres.
\newblock {\em International Mathematics Research Notices}, 2014(9):2577--2592,
  2014.

\bibitem[CEF15]{church2015fi}
Thomas Church, Jordan~S Ellenberg, and Benson Farb.
\newblock {FI}-modules and stability for representations of symmetric groups.
\newblock {\em Duke Mathematical Journal}, 164(9):1833--1910, 2015.

\bibitem[CEFN14]{church2014fi}
Thomas Church, Jordan~S Ellenberg, Benson Farb, and Rohit Nagpal.
\newblock Fi-modules over noetherian rings.
\newblock {\em Geometry \& Topology}, 18(5):2951--2984, 2014.

\bibitem[CGKM12]{carlsson2012computational}
Gunnar Carlsson, Jackson Gorham, Matthew Kahle, and Jeremy Mason.
\newblock Computational topology for configuration spaces of hard disks.
\newblock {\em Physical Review E}, 85(1):011303, 2012.

\bibitem[CLM76]{cohen2007homology}
Frederick~Ronald Cohen, Thomas~Joseph Lada, and Peter~J May.
\newblock {\em The homology of iterated loop spaces}, volume 533.
\newblock Springer, 1976.

\bibitem[Dia09]{diaconis2009markov}
Persi Diaconis.
\newblock The markov chain monte carlo revolution.
\newblock {\em Bulletin of the American Mathematical Society}, 46(2):179--205,
  2009.

\bibitem[Far08]{farber2008invitation}
Michael Farber.
\newblock {\em Invitation to topological robotics}, volume~8.
\newblock European Mathematical Society, 2008.

\bibitem[FH13]{fulton2013representation}
William Fulton and Joe Harris.
\newblock {\em Representation theory: a first course}, volume 129.
\newblock Springer Science \& Business Media, 2013.

\bibitem[Hat01]{hatcher2005algebraic}
Allen Hatcher.
\newblock {\em Algebraic topology}.
\newblock Cambridge University Press, 2001.

\bibitem[JLL01]{james2001representations}
Gordon James, Martin~W Liebeck, and Martin Liebeck.
\newblock {\em Representations and characters of groups}.
\newblock Cambridge University Press, 2001.

\bibitem[Knu18]{knudsen2018configuration}
Ben Knudsen.
\newblock Configuration spaces in algebraic topology.
\newblock {\em arXiv preprint arXiv:1803.11165}, 2018.

\bibitem[Ram17]{ramos2017generalized}
Eric Ramos.
\newblock Generalized representation stability and {FI}$_{d}$-modules.
\newblock {\em Proceedings of the American Mathematical Society},
  145(11):4647--4660, 2017.

\bibitem[Ram19]{ramos2019configuration}
Eric Ramos.
\newblock Configuration spaces of graphs with certain permitted collisions.
\newblock {\em Discrete \& Computational Geometry}, 62(4):912--944, 2019.

\bibitem[Sin06]{sinha2006homology}
Dev Sinha.
\newblock The homology of the little disks operad.
\newblock {\em arXiv preprint math/0610236}, 2006.

\bibitem[SS17]{sam2017grobner}
Steven Sam and Andrew Snowden.
\newblock Gr{\"o}bner methods for representations of combinatorial categorie.
\newblock {\em Journal of the American Mathematical Society}, 30(1):159--203,
  2017.

\bibitem[SS19]{sam_snowden_2019}
Steven Sam and Andrew Snowden.
\newblock Gl-equivariant modules over polynomial rings in infinitely many
  variables. ii.
\newblock {\em Forum of Mathematics, Sigma}, 7, 2019.

\bibitem[Wil18]{wilson2018introduction}
Jenny Wilson.
\newblock An introduction to {FI}--modules and their generalizations.
\newblock {\em URL: http://www. math. lsa. umich. edu/\~{} jchw/FILectures.
  pdf}, 2018.

\end{thebibliography}

\end{document}